%&biglatex 
\documentclass[a4paper, twoside, 11pt, english]{article}

\usepackage[T1]{fontenc}
\usepackage[utf8]{inputenc}

\usepackage{etoolbox}

%%%%Extensions
\usepackage{amsmath,amsthm,amssymb,stmaryrd}%\usepackage{upgreek}
\usepackage{mathtools}
%%Polices
\usepackage[ttscale=.875]{libertine}
\usepackage[libertine]{newtxmath}

\usepackage{biblatex}
\usepackage{comment}
\usepackage{ifthen}
\usepackage{algorithm2e}
\usepackage{multicol}

\usepackage{fancyhdr, titlesec, url, enumerate, microtype,setspace}

\usepackage[all,knot,poly]{xy}
\usepackage{tikz}
\usetikzlibrary{decorations.pathreplacing}

\usepackage{tikz-qtree,varwidth}
\usepackage{youngtab}
\usepackage{ytableau}

\usepackage[english]{babel}

\usepackage{hyperref}

\usetikzlibrary{calc}

\newcount\hh
\newcount\mm
\mm=\time
\hh=\time
\divide\hh by 60
\divide\mm by 60
\multiply\mm by 60
\mm=-\mm
\advance\mm by \time
\def\hhmm{\number\hh:\ifnum\mm<10{}0\fi\number\mm}

%%%%%%%%%%%%%
%% MISE EN PAGE %%
%%%%%%%%%%%%%

%% Styles des sections
%\newcommand{\periodafter}[1]{\ifstrempty{#1}{}{#1.}}
%\titleformat{\section}[block]{\scshape\filcenter\LARGE}{\thesection.}{.5em}{}
%\titleformat{\subsection}[block]{\bfseries\filcenter\large}{\thesubsection.}{.5em}{\medskip}
%\titleformat{\subsubsection}[runin]{\bfseries}{\thesubsubsection.}{.5em}{\periodafter}%{}[.]
%\titlespacing{\subsubsection}{0pt}{\topsep}{.5em}

%% Styles des sections
\newcommand{\periodafter}[1]{\ifstrempty{#1}{}{#1.}}
\titleformat{\section}[block]{\scshape\filcenter\LARGE\boldmath}{\thesection.}{.5em}{}
\titleformat{\subsection}[block]{\bfseries\filcenter\large\boldmath}{\thesubsection.}{.5em}{\medskip}
\titleformat{\subsubsection}[runin]{\bfseries\boldmath}{\thesubsubsection.}{.5em}{\periodafter}%{}[.]
\titlespacing{\subsubsection}{0pt}{\topsep}{.5em}

%% Styles des théorèmes
\newtheoremstyle{ntheorem}%
	{\topsep}{\topsep}{\itshape}{0pt}{\bfseries}{.}{.5em}%
	{\thmnumber{#2.\hspace{.5em}}\thmname{#1}\thmnote{ (#3)}}
	
\newtheoremstyle{ndefinition}%
	{\topsep}{\topsep}{\normalfont}{0pt}{\bfseries}{.}{.5em}%
	{\thmnumber{#2.\hspace{.5em}}\thmname{#1}\thmnote{ (#3)}}
	
\newtheoremstyle{nremark}%
	{\topsep}{\topsep}{\normalfont}{0pt}{\itshape}{.}{.5em}%
	{\thmnumber{}\thmname{#1}\thmnote{ (#3)}}

\theoremstyle{ntheorem}

\theoremstyle{ndefinition}

\makeatletter
\def\@equationname{equation}

\makeatother

%% Mise en page
\pagestyle{fancy}
\setlength{\oddsidemargin}{0cm}
\setlength{\evensidemargin}{0cm}
\setlength{\topmargin}{0cm} 
\setlength{\headheight}{1cm}
\setlength{\headsep}{1cm}
\setlength{\textwidth}{16cm}
\setlength{\marginparwidth}{0cm}
\setlength{\footskip}{2cm}
\setlength{\headwidth}{16cm}

\fancyhead{}\fancyfoot[LC,RC]{}
\fancyhead[LE]{\leftmark}
\fancyhead[RO]{\rightmark}
\fancyfoot[LE,RO]{$\thepage$}
\fancypagestyle{plain}{
\fancyhf{}\fancyfoot[LC,RC]{}
\fancyfoot[LE,RO]{$\thepage$}

}

\setlength{\arraycolsep}{1pt}

%%%%%%Macro texte%%%%%%

%%%%%%%%
%% XYPIC %%
%%%%%%%%

%% Options 

\UseTips
\SelectTips{eu}{11}

%% Nouvelle direction pour les monomorphismes (>->)
\newdir{ >}{{}*!/-10pt/@{>}}
\newdir{ -}{{}*!/-10pt/@{}}
\newdir{> }{{}*!/+10pt/@{>}}

%% Ajouts de la variante 4 pour les blancs, les lignes, les pointill�s, 
%% les zigouigouis (pas top) et les t�tes de fl�ches (presque finies).
%% Concr�tement, dans \xymatrix, la commande \ar @4 produit une 4-cellule.
\makeatletter

% Noms des nouvelles directions
\xyletcsnamecsname@{dir4{}}{dir{}}
\xydefcsname@{dir4{-}}{\line@ \quadruple@\xydashh@}
\xydefcsname@{dir4{.}}{\point@ \quadruple@\xydashh@}
\xydefcsname@{dir4{~}}{\squiggle@ \quadruple@\xybsqlh@}
\xydefcsname@{dir4{>}}{\Tttip@}
\xydefcsname@{dir4{<}}{\reverseDirection@\Tttip@}

% Commande quadruple
% Attention : la position des pourcents est importante !!
\xydef@\quadruple@#1{%
	\edef\Drop@@{%
		\dimen@=#1\relax
		\dimen@=.5\dimen@
		\A@=-\sinDirection\dimen@
		\B@=\cosDirection\dimen@
		\setboxz@h{%
			\setbox2=\hbox{\kern3\A@\raise3\B@\copy\z@}%
			\dp2=\z@ \ht2=\z@ \wd2=\z@ \box2
			\setbox2=\hbox{\kern\A@\raise\B@\copy\z@}%
			\dp2=\z@ \ht2=\z@ \wd2=\z@ \box2
			\setbox2=\hbox{\kern-\A@\raise-\B@\copy\z@}%
			\dp2=\z@ \ht2=\z@ \wd2=\z@ \box2
			\setbox2=\hbox{\kern-3\A@\raise-3\B@ \noexpand\boxz@}%
			\dp2=\z@ \ht2=\z@ \wd2=\z@ \box2
		}%
		\ht\z@=\z@ \dp\z@=\z@ \wd\z@=\z@ \noexpand\styledboxz@
	}%
}

% T�tes de quadruples fl�ches : positionnement ok, reste la jointure
% Les parties comment�es sont des essais � reprendre plus tard
\xydef@\Tttip@{\kern2pt \vrule height2pt depth2pt width\z@
	\Tttip@@ \kern2pt \egroup
	\U@c=0pt \D@c=0pt \L@c=0pt \R@c=0pt \Edge@c={\circleEdge}%
	\def\Leftness@{.5}\def\Upness@{.5}%
	\def\Drop@@{\styledboxz@}\def\Connect@@{\straight@{\dottedSpread@\jot}}}
	
\xydef@\Tttip@@{%
	\dimen@=.25\dimen@
%	\A@=-\sinDirection\dimen@
 	\B@=\cosDirection\dimen@
	\setboxz@h\bgroup\reverseDirection@\line@ \wdz@=\z@ \ht\z@=\z@ \dp\z@=\z@
%	\kern\A@ \raise\B@ \boxz@ \kern\L@c
%	\kern-\L@c \boxz@ \kern\L@c
	{\vDirection@(1,-1)\xydashl@ \xyatipfont\char\DirectionChar}%
	{\vDirection@(1,+1)\xydashl@ \xybtipfont\char\DirectionChar}%
}

% Red�finition de la commande \ar 
% Provoque un avertissement � la compilation
\xydef@\ar@form{
	\ifx \space@\next \expandafter\DN@\space{\xyFN@\ar@form}%
	\else\ifx ^\next \DN@ ^{\xyFN@\ar@style}\edef\arvariant@@{\string^}%
	\else\ifx _\next \DN@ _{\xyFN@\ar@style}\edef\arvariant@@{\string_}%
	\else\ifx 0\next \DN@ 0{\xyFN@\ar@style}\def\arvariant@@{0}%
	\else\ifx 1\next \DN@ 1{\xyFN@\ar@style}\def\arvariant@@{1}%
	\else\ifx 2\next \DN@ 2{\xyFN@\ar@style}\def\arvariant@@{2}%
	\else\ifx 3\next \DN@ 3{\xyFN@\ar@style}\def\arvariant@@{3}%
	\else\ifx 4\next \DN@ 4{\xyFN@\ar@style}\def\arvariant@@{4}%
	\else\ifx \bgroup\next \let\next@=\ar@style
	\else\ifx [\next \DN@[##1]{\ar@modifiers{[##1]}}%]
	\else\ifx *\next \DN@ *{\ar@modifiers}%
	\else\addLT@\ifx\next \let\next@=\ar@slide
	\else\ifx /\next \let\next@=\ar@curveslash
	\else\ifx (\next \let\next@=\ar@curveinout %)
	\else\addRQ@\ifx\next \addRQ@\DN@{\ar@curve@}%
	\else\addLQ@\ifx\next \addLQ@\DN@{\xyFN@\ar@curve}%
	\else\addDASH@\ifx\next \addDASH@\DN@{\defarstem@-\xyFN@\ar@}%
	\else\addEQ@\ifx\next \addEQ@\DN@{\def\arvariant@@{2}\defarstem@-\xyFN@\ar@}%
	\else\addDOT@\ifx\next \addDOT@\DN@{\defarstem@.\xyFN@\ar@}%
	\else\ifx :\next \DN@:{\def\arvariant@@{2}\defarstem@.\xyFN@\ar@}%
	\else\ifx ~\next \DN@~{\defarstem@~\xyFN@\ar@}%
	\else\ifx !\next \DN@!{\dasharstem@\xyFN@\ar@}%
	\else\ifx ?\next \DN@?{\ar@upsidedown\xyFN@\ar@}%
	\else \let\next@=\ar@error
	\fi\fi\fi\fi\fi\fi\fi\fi\fi\fi\fi\fi\fi\fi\fi\fi\fi\fi\fi\fi\fi\fi\fi \next@}

\makeatother

%%%%%%%%%%%%%%%%%
%% SYMBOLES %%%%%%%%
%%%%%%%%%%%%%%%%%

%% Flèches

\newcommand{\qfl}{\xymatrix@1@C=10pt{\ar@4 [r] &}}

%% Parenthèses et crochets

%% Accents

%% Opérateurs

%% Opérations binaires

%% Caractères 
\renewcommand{\phi}{\varphi}
\renewcommand{\epsilon}{\varepsilon}

%\newcommand{\Wr}{\EuScript{W}}

%% Catégories

%% Cohomologie

%

%\newcommand{\ifthen}[2]{\ifthenelse{#1}{#2}{}}
%

%%%%%%%%
%% Chapeaux pour les conditions sur les longueurs 
%%%%%%%%

\newcommand{\ifthen}[2]{\ifthenelse{#1}{#2}{}}

%%%%NEW

%%%%fin NEW

\definecolor{cyan}{RGB}{175,238,238}

\renewcommand{\leq}{\leqslant}
\renewcommand{\geq}{\geqslant}

\makeatletter
\def\blfootnote{\xdef\@thefnmark{}\@footnotetext}
\makeatother

\newcommand{\sph}{\text{Sph}}

\newcommand{\lbr}{\text{LBr}}
\newcommand{\bra}{\text{Br}}
\newcommand{\seq}{\text{Seq}}
\newcommand{\step}{\text{Step}}

\newcommand{\longsquigglydown}{\xymatrix{{}\ar@{~>}[d]\\ {}}}
\newcommand{\longsquigglyright}{\xymatrix{{}\ar@{~>}[r]&{}}}
\newcommand{\longsquigglyleft}{\xymatrix{{}&{}\ar@{~>}[l]}}
\newcommand{\longsquigglyup}{\xymatrix{{}\\\ar@{~>}[u] {}}}
\newcommand{\longsquiggupleft}{\xymatrix{   {} & {} \\ {} & {} \ar@{~>} [lu]  }}
\newcommand{\longsquiggdownleft}{\xymatrix{   {} & \ar@{~>}[dl]{} \\ {} & {}  }}
\newcommand{\longsquiggdownright}{\xymatrix{   {} \ar@{~>}[dr]{}  &\\ {} & {}  }}
\newcommand{\ov}{\overline}

\newcommand{\col}{\mathsf{Col}}

\newcommand{\di}{$\diamond$}
\newcommand{\mac}{\mathcal}

\newcommand{\iar}{\stackrel{i}{\longrightarrow}}
\newcommand{\ins}{\text{ins}}
\newcommand{\yt}{\text{Tab}(A_}

\newtheorem{prop}[subsubsection]{Proposition}
\newtheorem{thm}[subsubsection]{Theorem}
\newtheorem{defin}[subsubsection]{Definition}
\newtheorem{lemm}[subsubsection]{Lemma}
\newtheorem{cor}[subsubsection]{Corollary}
\newtheorem{exo}[subsubsection]{Example}

\newcommand{\bb}{\mathbf{B}}
\newcommand{\veps}{\varepsilon_i}
\newcommand{\vphi}{\varphi_i}
\makeatletter
\newcommand{\xRrightarrow}[2][]{\ext@arrow 0359\Rrightarrowfill@{#1}{#2}}
\newcommand{\Rrightarrowfill@}{\arrowfill@\equiv\equiv\Rrightarrow}
\newcommand{\xLleftarrow}[2][]{\ext@arrow 3095\Lleftarrowfill@{#1}{#2}}
\newcommand{\Lleftarrowfill@}{\arrowfill@\Lleftarrow\equiv\equiv}
\makeatother
\makeatletter
\newcommand*{\doubleleftarrow}[2]{\mathrel{
		\settowidth{\@tempdima}{$\scriptstyle#1$}
		\settowidth{\@tempdimb}{$\scriptstyle#2$}
		\ifdim\@tempdimb>\@tempdima \@tempdima=\@tempdimb\fi
		\mathop{\vcenter{
				\offinterlineskip\ialign{\hbox to\dimexpr\@tempdima+1em{##}\cr
					\leftarrowfill\cr\noalign{\kern.5ex}
					\leftarrowfill\cr}}}\limits^{\!#1}_{\!#2}}}
\newcommand*{\doublerightarrow}[2]{\mathrel{
		\settowidth{\@tempdima}{$\scriptstyle#1$}
		\settowidth{\@tempdimb}{$\scriptstyle#2$}
		\ifdim\@tempdimb>\@tempdima \@tempdima=\@tempdimb\fi
		\mathop{\vcenter{
				\offinterlineskip\ialign{\hbox to\dimexpr\@tempdima+1em{##}\cr
					\rightarrowfill\cr\noalign{\kern.5ex}
					\rightarrowfill\cr}}}\limits^{\!#1}_{\!#2}}}
\newcommand*{\trTheoremiplerightarrow}[1]{\mathrel{
		\settowidth{\@tempdima}{$\scriptstyle#1$}
		\mathop{\vcenter{
				\offinterlineskip\ialign{\hbox to\dimexpr\@tempdima+1em{##}\cr
					\rightarrowfill\cr\noalign{\kern.5ex}
					\rightarrowfill\cr\noalign{\kern.5ex}
					\rightarrowfill\cr}}}\limits^{\!#1}}}
\makeatother

\newenvironment{manualtheorem}[1]{%
	\manualtheoreminner
}{\endmanualtheoreminner}

%\newcommand{\atreekater}[12]{% koord x, koord y, p, a, b, c, d, e, f , g, h, k
%	\atreetre{#1}{#2}{#3}{#4}{#5}{#6}{#7}{#8}
%	\atreedy{#1+2.5}{#2-2}{#9}{10}{}
%	\draw
%	(#1-1.5,#2-2) -- (#1-2.25,#2-3) node {$\bullet$} node [left] {$#12$};
%	\draw
%	(#1-.5,#2-2) -- (#1-1.25,#2-3) node {$\bullet$} node [left] {$#11$};
%}

%%%% ktu koordinatat jane dy te fundit, variablat krejte prej fillimit

\addbibresource{referencat.bib}
\begin{document}
\thispagestyle{empty}

\begin{center}

\begin{doublespace}
\begin{huge}
{\scshape Coherence for plactic monoids via rewriting theory and crystal structures}
%{\scshape for plactic monoids of classical type}
%{\scshape $A_n$, $B_n$, $C_n$, $D_n$, and $G_2$}
\end{huge}

\bigskip
\hrule height 1.5pt 
\bigskip

\begin{Large}
{\scshape Uran Meha}
\end{Large}
\end{doublespace}

\vfill

\begin{small}\begin{minipage}{14cm}
\noindent\textbf{Abstract -- }
Rewriting methods have been developed for the study of coherence for algebraic objects. This consists in starting with a convergent presentation, and expliciting a family of generating confluences to obtain a coherent presentation -- one with generators, generating relations, and generating relations between relations (syzygies). In this article we develop these ideas for a class of monoids which encode the representation theory of complex symmetrizable Kac-Moody algebras, called plactic monoids. The main tools for this are the crystal realization of plactic monoids due to Kashiwara, and a class of presentations compatible with a crystal structure, called crystal presentations. We show that the compatibility of the crystal structure with the presentation reduces certain aspects of the study of plactic monoids by rewriting theory to components of highest weight in the crystal. We thus obtain reduced versions of Newman's Lemma and Critical Pair Lemma, which are results for verifying convergence of a presentation. Further we show that the family of generating confluences of a convergent crystal presentation is entirely determined by the components of highest weight. Finally we apply these constructions to the finite convergent presentations of plactic monoids of type $A_n$, $B_n$, $C_n$, $D_n$, and $G_2$, due to Cain-Gray-Malheiro.

%
%\medskip
%
%\smallskip\noindent\textbf{Keywords --} Jeu de taquin, plactic monoids, hypoplactic monoids, string data structures, string of columns rewriting.
%
%\medskip
%
%\smallskip\noindent\textbf{M.S.C. 2010 -- Primary:} 05E99. \textbf{Secondary:} 20M05, 68Q42, 20M35.
\end{minipage}\end{small}

%%%
\vspace{1cm}

%%% Table des matières
\begin{small}\begin{minipage}{12cm}
\renewcommand{\contentsname}{}
\setcounter{tocdepth}{2}
\tableofcontents
\end{minipage}
\end{small}
\end{center}

\clearpage

\tikzset{every tree node/.style={minimum width=1em,draw,circle},
         blank/.style={draw=none},
         edge from parent/.style=
         {draw,edge from parent path={(\tikzparentnode) -- (\tikzchildnode)}},
         level distance=0.8cm}
	
\section{Introduction}

Plactic monoids of classical type are objects that encode the representation theory of finite dimensional complex semisimple Lie algebras. These objects enjoy a rich structure, and may be studied via different approaches: combinatorial, algebraic, geometric, and rewriting theory. The algebraic approach, due to Kashiwara, is centered around a notion of directed labeled graphs called crystals. A key concept in the theory of crystals are the vertices of elements of highest weight. The rewriting approach, due to Cain-Gray-Malheiro, provides finite convergent presentations for the plactic monoids, called \textit{column presentations}. Convergent presentations allow for the study of the monoid via means of rewriting theory, where one can obtain a finite polygraphic resolution of the given monoid, which gives rise to a finitely generated free resolution of the monoid. The first step in this construction consists of identifying the generating relations between relations, also called the generating syzygies, that is a \textit{coherent presentation}. Squier's coherence theorem states how one can obtain a coherent presentation from a given convergent one. In this article, we prove a version of Squier's theorem in the context of plactic monoids, by taking into account the fact that the crystal structure of the monoid interacts nicely with the sets of generators and relations of the column presentation. More precisely we show that Squier's coherent extension is entirely determined by identifying the generating relations between relations at highest weight.

\subsection{Plactic monoids}
Plactic monoids are objects in mathematics that appear in the study of many, seemingly unrelated, problems. Initially Schensted \cite{schensted1961longest} in 1961 uses the notion of \textit{Young tableau} in an algorithmic solution to the problem of finding the longest monotonic subword of a given word $w$ in the alphabet $A_n=\{1,2,\dots,n\}$. In particular he introduces a notion of \textit{insertion} which produces a tableau $P(w)$ from a word $w$, which contains relevant information on subwords of $w$. In 1970, Knuth \cite{knuth1970permutations} identifies the \textit{Knuth relations} as the generating relations for the congruence on $A_n^*$ defined by $P(w_1) = P(w_2)$. Lascoux and Sch\"{u}tzenberger \cite{lascoux1981monoide} define and formally study the free monoid $A_n^*$ quotiented by the Knuth relations, and call it the \textit{plactic monoid of type $A_n$}. This plactic monoid of type $A_n$ finds many applications in algebra and representation theory of type $A_n$ algebraic objects, that is the symmetric group $S_n$, the general lineal group $GL_n(\mathbb{C})$, the special linear algebra $\mathfrak{sl}_n(\mathbb{C})$. For instance Sch\"{u}tzenberger \cite{schutzenberger1977correspondance} uses it to give one of the first proofs of the Littlewood-Richardson, which is a result used to compute products of Schur functions, and tensor products of irreducible representations of $GL_n(\mathbb{C})$ or of the symmetric group $S_n$. A very detailed account of the plactic monoid $Pl(A_n)$ and its combinatorial realization can be found in \cite{lothaire2002algebraic}.
 
With the emergence of the plactic monoid of type $A_n$ as a combinatorial tool which finds application in problems representation theory of type $A_n$, similar objects and constructions were searched for the other types of classical algebraic objects. To this end, Kashiwara \cite{kashiwara1991crystal} introduces a \textit{plactic monoid} associated to any complex symmetrizable Kac-Moody algebra $\mathfrak{g}$. He achieves this in his study of representations of quantum groups Kac-Moody algebras $U_q(\mathfrak{g})$, via a notion of \textit{crystals}. These are certain directed labeled graphs which encode the weight decomposition of integrable representations of $U_q(\mathfrak{g})$ and moreover there in a tensor product of crystals which models the tensor product of representations of $\mathfrak{g}$. Then the the elements of the plactic monoid $Pl(\mathfrak{g})$ may be viewed as the weight spaces of some integrable representation of $\mathfrak{g}$, e.g. an element $p_\mu$ as a $\mu$-weight space of a representation $V_p$. The product of two elements $p_\mu\ast q_\nu$ corresponds to the $(\mu +\nu)$-weight space of $V_p\otimes V_q$. In other words, the plactic monoid $Pl(\mathfrak{g})$ is a model for the representation theory of $\mathfrak{g}$. These objects fulfill the desired goal for the plactic monoids of other classical types: they encode important algebraic information, and moreover lead to proofs of the Littelwood-Richardon rule in larger generality, see the work of Littelmann \cite{littelmann1994littlewood,littelmann1996plactic}.

Of particular interest are the plactic monoids of classical type $Pl(A_n)$, $Pl(B_n)$, $Pl(C_n)$, $Pl(D_n)$, and $Pl(G_2)$, which correspond to the complex Lie algebras $\mathfrak{sl}_n$, $\mathfrak{o}_{2n}$, $\mathfrak{sp}_{2n}$, $o_{2n+1}$ and $G_2$. Drawing from the theory of $Pl(A_n)$, where the elements and product admit suitable combinatorial realizations, similar constructions were searched for the plactic monoids of other types. This enables one carry the study of representation theory of complex Lie algebra to a very rich combinatorial world. This was achieved in the works of Lecouvey \cite{lecouvey2003schensted,lecouvey2002schensted,lecouvey2007part} where he uses adapted notions of tableau and insertion to realize the plactic monoids of classical type. A key notion in phrasing his constructions is that of admissible columns, due to Kashiwara-Nakashima \cite{kashiwara1991crystal}. In particular, Lecouvey gives presentations of the plactic monoids by generators and relations. The work of Cain-Gray-Malheiro \cite{cain2019crystal} extends the work of Lecouvey by producing presentations for $Pl(A_n)$, $Pl(B_n)$, $Pl(C_n)$, $Pl(D_n)$, and $Pl(G_2)$, where the generators are columns, and the generating are oriented and of the form $c_1 c_2 \Longrightarrow (c_1 \leftarrow c_2)$, where $(c_1\leftarrow c_2)$ denotes the Schensted insertion of the column $c_2$ into $c_1$. Their presentations satisfy the properties of \textit{termination} and \textit{confluence}, and this opens up an approach for studying the plactic monoids via rewriting theory.

\subsection{The study of monoids via rewriting theory}

Rewriting theory is a discipline of mathematics that finds applications in studying algebraic objects via certain presentations. In the context of monoids, these presentations are pairs of sets $X = (X_1, X_2)$, with $X_1$ being the generators, and $X_2$ the \textit{oriented} generating relations, i.e. the elements of $X_2$ are of the form $u \Longrightarrow u'$ for $u,u'\in X_1^*$. The monoid presented by $X$ is the quotient $\ov{X}:=X_1^*/\sim_{X_2}$, where $\sim_{X_2}$ is the congruence on $X_1^*$ generated by the relations in $X_2$. To such a presentation we associate a rewriting system
$
\mac{R}(X):= \left( X_1^*\ |\ tuv \Longrightarrow tu'v,\ \text{for } u\Longrightarrow u'\in X_2,\ t,v\in X_1^* \right)
$.
Two non-degeneracy properties of $X$ which facilitate the study of $\ov{X}$ are those of \textit{termination}, which asserts that no infinite rewriting sequences $w_1\Longrightarrow w_2\Longrightarrow w_3\Longrightarrow\cdots$ exist; and \textit{confluence} which states if a word $w\in X_1^*$ can be rewritten to two different words $w_1,w_2$, then $w_1$ and $w_2$ can be rewritten to a common word $w_3$. For a more detailed exposition of rewriting theory, see \cite{klop1990term}.

One advantage of convergent presentations of monoids is that they provide an algorithmic approach for the homotopical and homological study of the given monoid. In the classical work of Eilenberg-Maclane \cite{eilenberg}, for a given monoid $M$ they construct an $\infty$-dimensional CW-complex $Z$ such that its homotopy group $\pi_1(Z)$ is isomorphic to the universal enveloping group of the given monoid $M$. One can then use $Z$ for a the study of homological invariants of $M$. However $Z$ can be very large, though if $M$ comes with a convergent presentation, one can collapse parts of it to obtain a much smaller and computationally-suitable CW-complex $Z'$ which preserves the homotopical structure of $Z$, see Brown \cite{brown}. The work of Guiraud-Malbos in \cite{guiraud2012higher} adopts a categorical language to express these notions. Moreover, they show that a finite convergent presentation $X=(X_1,X_2)$ of a monoid can be extended to an $\infty$-tuple $(X_1,X_2,X_3,\cdots)$ of sets, called a \textit{polygraphic resolution} of $M$. Informally one may view the polygraphic resolution of $M$ as the data of generating $n$-cells via which one can construct any $n$-cell in the categorical version of $Z'$ simply by gluing the generators. As an application of this, one obtains an explicit free resolution of $\mathbb{Z}$ over $\mathbb{Z}M$, and since the construction of the generating $n$-cells in \cite{guiraud2012higher} is very algorithmic, one may use an algorithmic approach for the homological study of $M$. The polygraphic resolution is a cofibrant replacement of the monoid $M$ relative to the folk-model on $(\omega,1)$-categories, see \cite{Lafont}.  We remark here that the approach of studying the homology of an algebraic object via convergent presentations also appears in the work of Anick \cite{anick1986homology} and Hoffbeck-Guiraud-Malbos \cite{hoffbeck} for associative algebras, and the work of Khoroshkin-Dotsenko \cite{DotsenkoKhoroshkin13} for operads.

The first step in constructing a polygraphic resolution from a convergent presentation $X=(X_1,X_2,X_3)$ of a monoid $M$ consists in identifying the set of generating $3$-cells $X_3$, i.e. syzygies, which may be viewed as generating relations between relations. Such a triple $(X_1,X_2,X_3)$ where \textit{coherent presentation} of the $M$. Squier's coherence theorem describes how one can produce a coherent presentation from a convergent presentation $(X_1,X_2)$. The crucial notion in phrasing the theorem is that of the set $\lbr(X)$ of \textit{local branchings} of $X$, consisting of pairs of rewriting steps $(f,g)$ with $f,g\in\mac{R}(X)$. This set is equipped with a partial order $\sqsubset$ generated by the relations $(f,g) \sqsubset (tfv,tgv)$ for $t,v\in X_1^*$. The minimal elements $(f,g)$ of $\lbr(X)$ with respect to $\sqsubset$ are called \textit{critical branchings}. Squier's theorem states that if we take $X_3$ to consist of $3$-cells of the form for each critical branching $(f,g)$
\begin{equation}\label{eq:some 3 cell}
{\xymatrix@!R@R=1em{ & w_1 \ar@2@/^/@{.>}^{f'}[rd] & \\
		w \ar@2@/^/@{->}^f[ur] \ar@2@/_/@{->}_g[rd]& A(f,g) & w_3\\
		& w_2 \ar@2@/_/@{.>}_{g'}[ur] & }}
\end{equation}
then $(X_1,X_2,X_3)$ is a coherent presentation for the monoid $M$. The work of Guiraud-Malbos \cite{guiraud2018polygraphs} provides a deterministic and algorithmic procedure for expliciting these generating 3-cells, via a notion of \textit{normalization strategies}, which produces the pair $(f',g')$ from $(f,g)$. This approach has been employed to describe the coherence relations for different monoids by: Gaussent-Guiraud-Malbos \cite{gaussent2015coherent} for Artin monoids; by Hage-Malbos \cite{hage2017knuth} for the plactic monoid $Pl(A_n)$; by Hage-Malbos \cite{HageMalbos22} for the Chinese monoid.

\subsection{Organization of the article}

In this article we consider introduce a class of presentations that are compatible with a crystal structure, and show that this compatibility leads to a reduced process of obtaining coherence from convergence.

To this end, we begin Section \ref{sec:prelims} by recalling the necessary notions for phrasing the results of this article. The first part of this section, recalls the notion of a convergent presentation as well as two classical results in verifying convergence: Newman's Lemma and the Critical Pair Lemma. We then recall Squier's theorem which asserts how one obtains a coherent presentation from a convergent one, as well as the notion of normalization strategies. We remark that the exposition of these notions is based on \cite{guiraud2012higher} and hence is done in a categorical language. The second part of this section recalls the notions of crystals, and of the plactic monoids of classical type. We give the definition of crystals according to Joseph \cite{joseph}, though we restrict our considerations to a category of crystals denoted $\mathfrak{g}-\textsc{cryst}$ as in the work of Hénriques-Kamnitzer \cite{henriques2006crystals}.  The advantage of this category is that it contains the crystals of classical type, and moreover satisfies a Schur's Lemma. A simplified way to view crystals, due to Stembridge \cite{stembridge2003local} and Cain-Gray-Malheiro \cite{cain2019crystal}, is as directed labeled graphs satisfying certain finiteness conditions. Two key notions for crystals are \textit{connected components}, and vertices of highest weight, which are vertices that no edge enters. We recall how the crystal structure extends to the free monoid generated by the vertices of the crystal $\Gamma$, as well as the \textit{crystal congruence} $\sim_{\Gamma}$ which identifies the isomorphic connected components in $\Gamma^*$. We then recall the notions of plactic monoids as quotients $\Gamma^*/\sim_\Gamma$, and describe the combinatorial realization of $Pl(A_n)$. Finally we recall the finite convergent presentations of $Pl(A_n)$, $Pl(B_n)$, $Pl(C_n)$, $Pl(D_n)$, and $Pl(G_2)$ called \textit{column presentations}, due to Cain-Gray-Malheiro \cite{cain2019crystal}.

In Section \ref{sec:K-rewriting} we consider a more general notion of a crystal congruence $\sim$ on free crystal monoids $\Gamma^*$. We characterize the \textit{crystal monoids} $M$ of the form $\Gamma^*/\sim$ as those monoids equipped with an underlying crystal structure such that the product of $M$ mimicks the tensor product of crystals. We then develop a rewriting theory adapted to studying crystal congruences. This consists in considering \textit{crystal presentations}, that is $X=(X_1,X_2)$ such that $X_1$, $X_2$ are crystals, and the source and target morphisms $s_1,t_1:X_2 \longrightarrow X_1^*$ are crystal morphisms. We show that given such a presentations, the crystal structure of $X$ extends to the sets of rewriting sequences, of local branchings, and of critical branchings of $X$. In particular we prove that termination and confluence of $X$ are entirely determined by the rewriting rules of highest weight, that is $w_1 \Longrightarrow w_2$ such that $w_1$ is a word of highest weight in $X_1^*$. In particular we obtain reduced versions of Newman's Lemma and Critical Pair Lemma via which one can verify convergence of $X$ by considerations at highest weight. Further we show that the column presentations of plactic monoids are crystal presentations.

In Section \ref{sec:crystal structure on squier} we begin by laying the groundwork for proving a reduced Squier's coherence theorem for crystal presentation. We introduce a class of \textit{crystal 2-monoids} $\mac{C}$, whose sets of 1-cells and 2-cells are crystals, and are compatible with the $\star_0$ and $\star_1$ compositions of $\mac{C}$. We then show that the the free 2-monoid $X_2^*$ and the free (2,1)-monoid $X_2^\top$ generated by a crystal presentation $X$ are in fact crystal 2-monoids. Further we consider adapted notions of congruences on 2-monoids and normalization strategies to the context of crystal 2-monoids, and obtain the main result of this article
\begin{quote}
	\begin{manualtheorem}{\ref{thm:squier 2.0}}
		Let $X$ be a convergent crystal 2-polygraph, $r$ a crystal section, and $\sigma$ a crystal normalization strategy for $r$. Let $\Omega$ be the family of generating confluences obtained from $r$ and $\sigma$. Then $\Omega$ is a crystal cellular extension of $X_2^\top$, and the 2-spheres in $\Omega$ are entirely determined by the critical branchings of highest weight of $X$.
	\end{manualtheorem}
\end{quote}
In other words, Theorem \ref{thm:squier 2.0} says that if $(X_1,X_2,X_3)$ is Squier's coherent extension of a convergent crystal presentation $(X_1,X_2)$, then $X_3$ itself is a crystal, and the shape of $A(f,g)$ as in \eqref{eq:some 3 cell} is entirely determined by $A(f^0,g^0)$, where $f^0$ and $g^0$ are the highest weight elements corresponding to $f$ and $g$ respectively. Finally we apply this result to the plactic monoids of classical type $Pl(\Gamma)$, and reduce the identification of their coherent presentations to certain computations with words of highest weights.

\section{Preliminaries}\label{sec:prelims}
In this preliminaries section we recall the notion of coherent presentations of monoids, and the theory of crystals. In the first part of this section, we recall coherent presentations in a categorical language, following the work of Guiraud-Malbos, see \cite{gaussent2015coherent,guiraud2018polygraphs} for a more detailed account. In the second part of this section we recall the notions from crystal theory sufficient for defining \textit{crystal monoids}, see  \cite{hong2002introduction,joseph,kashiwara1990crystalizing,kashiwara1995crystal,kashiwara1991crystal} for a more detailed account.

\subsection{Coherent presentations of monoids}\label{subsec:2 cats and 2 dogs}

In this article we consider presentations of monoids given by generators and certain oriented relations. More formally, a $2$-\textit{polygraph} is a pair of sets $X=(X_1,X_2)$ along with source and target maps
$
X_1^* \doubleleftarrow{s_1}{t_1} X_2.
$ The elements of $X_1$ and $X_2$ are respectively called \textit{generating 1-cells} and \textit{generating 2-cells} of $X$. We often denote the 2-cells in $X_2$ by $s_1(\alpha)\Longrightarrow t_1(\alpha)$. The \textit{monoid presented by} $X$, denoted $\ov{X}$, is the quotient of the free monoid $X_1^*$ by the congruence $\sim_{X_2}$ generated by the relations of the form $s_1(\alpha) \sim t_1(\alpha)$ for $\alpha\in X_2$.

\subsubsection{Termination and normal forms}\label{subsec:termination nad normal forms}
A monoid presented by a 2-polygraph $X$ can be studied via rewriting theory by associating to $X$ the \textit{abstract rewriting system}
$$
\mac{R}(X):=\left(X_1^*\ \ |\ \ us_1(\alpha)v \Longrightarrow ut_1(\alpha)v,\quad \text{for }u,v\in X_1^*,\ \alpha\in X_2\right).
$$
In other words, $\mac{R}(X)$ is simply the set $X_1^*$ along with an oriented relation $\Longrightarrow$ on $X_1^*$ obtained by multiplying the generating 2-cells of $X$ with elements of $X_1^*$. We call a relation $w \Longrightarrow w'$ in $\mac{R}(X)$ a \textit{rewriting step}, and denote the set of rewriting steps by $\step(X)$. A \textit{rewriting sequence} of $X$ is a (possibly infinite) sequence
$$
w_1 \Longrightarrow_1 w_2 \Longrightarrow_2 w_3 \Longrightarrow_3 w_4 \Longrightarrow_4 \cdots
$$
with $w_i\Longrightarrow_i w_{i+1}$ rewriting steps. We denote the set of rewriting sequences by $\text{Seq}(X)$.

A 2-polygraph $X$ is called \textit{terminating} if $\seq(X)$ contains no infinite rewriting sequences. A word $w\in X_1^*$ is called a \textit{normal form} if there exists no rewriting step with the word $w$ as source. We denote the set of normal forms of $X$ by $\text{Nf}(X)$.

\subsubsection{Branchings and confluence}\label{subsec:bra and conf} A \textit{branching} of a 2-polygraph $X=(X_1,X_2)$ is a pair of rewriting sequences $(f,g)$ of $X$ with same source, which we present graphically as
$$
{\xymatrix@!R@R=1em{& w_1 \\ w \ar@2@/^/@{->}^f[ru] \ar@2@/_/@{->}[rd]_g& \\ & w_2.}}
$$
If $f$ and $g$ are rewriting steps, the branching $(f,g)$ is called a \textit{local}. We denote the set of branchings and local branchings of $X$ respectively by $\bra(X)$ and $\lbr(X)$.

A branching $(f,g)\in\bra(X)$ is called \textit{confluent} if there exist rewriting sequences $f',g'\in\seq(X)$ such that
$
s_1(f') = t_1(f),\ s_1(g') = t_1(g),$ and $t_1(f') = t_1(g'),
$
which is graphically presented as 
\begin{equation}\label{eq:confdiag}
{\xymatrix@!R@R=1em{ & w_1 \ar@2@/^/@{.>}^{f'}[rd] & \\
		w \ar@2@/^/@{->}^f[ur] \ar@2@/_/@{->}_g[rd]& & w_3\\
		& w_2 \ar@2@/_/@{.>}_{g'}[ur] & }}
\end{equation}
We call the diagram in \eqref{eq:confdiag} a \textit{confluence diagram} of the branching $(f,g)$.
If all (local) branchings of a $2$-polygraph $X$ are confluent, we call $X$ a \textit{(locally) confluent} $2$-polygraph. A 2-polygraph $X$ is called \textit{convergent} if it is terminating and confluent. In that case, any word $w\in X_1^*$ can be rewritten to a unique normal form, denoted $[w]$.

The local branchings of a 2-polygraph $X$ can be grouped into three families:
$$
\begin{array}{cc} \xymatrix@!R@R=1em{ & \\ u \ar@2@/^1.5pc/ ^f [r]  \ar@2@/_1.5pc/ _f [r]& v \\ & 
}\quad\qquad & \xymatrix@!R@R=1em{ & u'v\\ uv \ar@2@/^/ [ru] ^{fv} \ar@2@/_/ [rd] _{ug} & \\ & uv'
}\quad\qquad 
\\ & \\
\text{aspherical} \quad\qquad& \text{Peiffer} \quad
\end{array}
$$
% \xymatrix@!R@R=1em{ & t'u'v\\ tuv \ar@2@/^/ [ru] ^{fv} \ar@2@/_/ [rd] _{tg} & \\ & tu''v',}
with $t,u,v,t',u',v',u''\in X_1^*$ and $f,g \in \text{Step}(X)$, and the remaining local branchings are called \textit{overlapping}. The aspherical branchings are trivial and do not reveal much about the nature of $X$ or the presented monoid $\ov{X}$. Further, the Peiffer branchings trivially admit a confluence diagram via the pair $(u'g,fv')$. Thus the overlapping branchings are the ones that should contain crucial information about $X$ and $\ov{X}$. This will be precised in \ref{subsec:free2monoids} via the notion of free 2-monoids generated by 2-polygraphs.

The set of local branchings $\lbr(X)$ is equipped with an order $\sqsubseteq$ on $\text{LBr}(X)$ generated by the relations
$
(f,g) \sqsubseteq (ufv,ugv)
$
for $(f,g)\in\text{LBr}(X)$ and $u,v\in X_1^*$. An overlapping local branching that is minimal with respect to the order $\sqsubseteq$ is called a \textit{critical branching} or a \textit{critical pair} of $X$. We denote the set of critical branchings of the 2-polygraph $X$ by $\text{Crit}(X)$. A $2$-polygraph may have two types of critical branchings, which we graphically present as:
\begin{equation}\label{eq:types of crits}
\begin{array}{ccc}
{\xymatrix{ & & \\ \ast  \ar@/^3.0pc/ [rrr] ^{w}="tgt1" \ar@{->}[r] ^t &\ast \ar@/_3.0pc/[r] _{u_1}="tgt2"\ar@{->}[r]^{u}="src"& \ast \ar@{->}[r] ^v& \ast \\ & &
		\ar@2 "src"!<0pt,10pt>;"tgt1"!<0pt,-10pt> ^\alpha
		\ar@2 "src"!<0pt,-10pt>;"tgt2"!<0pt,15pt> _\beta}} &\qquad\quad& {\xymatrix{ & \ar@{<=} ^\alpha[d] & \\  \ast \ar@/^3.0pc/ [rr] ^{w'}="tgt1" \ar@{->}[r] _t & \ast \ar@/_3.0pc/[rr] _{w''}="tgt2"\ar@{->}[r] _u&  \ast \ar@{->}[r] _v& \ast \ \ \\ & & \ar@{<=}^\beta [u]
}} \\ & \\ \text{inclusion} & \quad\qquad & \text{overlapping}
\end{array}
\end{equation}

We recall here two classical results from rewriting theory on proving the properties of termination and confluence for 2-polygraphs.

\begin{thm} Let $X$ be a 2-polygraph. 
	\begin{itemize}
	 \item[\bf{i)}]{\bf{(\cite{newman1942theories}, Newman's Lemma)}} If $X$ is terminating and locally confluent, then $X$ is confluent.
	 \item[\textbf{ii)}] \textbf{(\cite{baader1999term}, Critical Pair Lemma)} $X$ is locally confluent if and only if its critical branchings are confluent.
	\end{itemize}
\end{thm}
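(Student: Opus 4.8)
The plan is to prove the two parts by the two standard mechanisms of abstract rewriting: part (i) by the Noetherian induction made available by termination, and part (ii) by the classification of local branchings together with the stability of confluence under left and right multiplication by words. I would present them in the order (ii) then (i), since local confluence of critical pairs is the hypothesis actually fed into Newman's Lemma in applications, although either order works.

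For part (ii), I would first dispatch the forward implication, which is immediate: every critical branching is in particular a local branching, so if $X$ is locally confluent then all its branchings, critical ones included, are confluent. The substance is the converse. Assuming every critical branching of $X$ is confluent, let $(f,g)\in\lbr(X)$ be an arbitrary local branching; I must produce a confluence diagram for it. Using the trichotomy recalled in the text, $(f,g)$ is aspherical, Peiffer, or overlapping. An aspherical branching has $f=g$ and is confluent by the empty sequences; a Peiffer branching is confluent through the pair $(u'g,fv')$, as already noted. It remains to treat the overlapping case, and here I would use that the order $\sqsubseteq$ is generated by $(f_{0},g_{0})\sqsubseteq(uf_{0}v,ug_{0}v)$ and that critical branchings are exactly the $\sqsubseteq$-minimal overlapping branchings: hence every overlapping local branching can be written as $(uf_{0}v,ug_{0}v)$ for some critical branching $(f_{0},g_{0})$ and $u,v\in X_1^{*}$. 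By hypothesis $(f_{0},g_{0})$ admits a confluence diagram, say via sequences $(f_{0}',g_{0}')$; since left and right multiplication by a fixed word sends a rewriting sequence to a rewriting sequence, the pair $(uf_{0}'v,ug_{0}'v)$ is a confluence diagram for $(uf_{0}v,ug_{0}v)$. Thus every local branching is confluent, which is the desired local confluence.

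For part (i), I would argue by Noetherian induction on the source word. Since $X$ terminates, $\seq(X)$ has no infinite sequence, so the relation $\Longrightarrow^{+}$ is a well-founded strict order on $X_1^{*}$, and it suffices to show, by induction along $\Longrightarrow^{+}$, that every $w\in X_1^{*}$ is confluent in the sense that any two sequences out of $w$ reach a common reduct. If both sequences out of $w$ are empty there is nothing to prove; otherwise write them as $w\Longrightarrow a\Longrightarrow^{*}w_1$ and $w\Longrightarrow b\Longrightarrow^{*}w_2$, isolating their first rewriting steps. Local confluence applied to the local branching $(w\Longrightarrow a,\ w\Longrightarrow b)$ yields a common reduct $c$ with $a\Longrightarrow^{*}c$ and $b\Longrightarrow^{*}c$. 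Now $a$ is a strict reduct of $w$, so by the induction hypothesis $a$ is confluent; applied to $a\Longrightarrow^{*}w_1$ and $a\Longrightarrow^{*}c$ this gives a common reduct $d$ with $w_1\Longrightarrow^{*}d$ and $c\Longrightarrow^{*}d$. Then $b\Longrightarrow^{*}c\Longrightarrow^{*}d$, and since $b$ is also a strict reduct of $w$, the induction hypothesis applied to $b\Longrightarrow^{*}w_2$ and $b\Longrightarrow^{*}d$ produces a common reduct $e$ with $w_2\Longrightarrow^{*}e$ and $d\Longrightarrow^{*}e$. Chaining $w_1\Longrightarrow^{*}d\Longrightarrow^{*}e$ exhibits $e$ as a common reduct of $w_1$ and $w_2$, closing the induction.

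The routine parts are the verification that aspherical and Peiffer branchings are confluent and that contextualization preserves rewriting sequences; the one genuinely load-bearing point is the double use of the induction hypothesis in Newman's Lemma, where the two applications must be made to $a$ and then to $b$ in the correct order, so that the intermediate reduct $c$ produced by local confluence can be funnelled first through $w_1$ and then through $w_2$. I expect this bookkeeping, rather than any conceptual difficulty, to be the main thing to get right: termination is precisely what guarantees the induction is well-founded and prevents the naive diamond-chasing from looping indefinitely.
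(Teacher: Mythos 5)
This theorem is stated in the paper as a recalled classical result, with citations to Newman and to Baader--Nipkow, and the paper gives no proof of its own; there is therefore nothing internal to compare against. Your argument is the standard textbook proof of both parts and is correct: Noetherian induction along $\Longrightarrow^{+}$ for Newman's Lemma, and the aspherical/Peiffer/overlapping trichotomy plus stability of confluence under contextualization $(f,g)\mapsto(ufv,ugv)$ for the Critical Pair Lemma. Two points are glossed but harmless: in part (ii) you need that every overlapping local branching actually dominates a $\sqsubseteq$-minimal one, which follows since passing down the generating relations strictly shortens the source word unless $u=v=1$; and in part (i) your ``otherwise'' branch should also cover the case where exactly one of the two sequences out of $w$ is empty, in which case the target of the other is already a common reduct.
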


\subsubsection{Free 2-monoids from 2-polygraphs}\label{subsec:free2monoids}
Recall that a 2-monoid is a 2-category $\mac{C}$ with a single 0-cell. If all the 1-cells of $\mac{C}$ are invertible, we call $\mac{C}$ a (2,1)-monoid. We recall here two constructions of free 2-monoids generated by 2-polygraphs.

Let $X=(X_1,X_2)$ be a 2-polygraph. The \textit{free 2-monoid generated by $X$}, denoted $X_2^*$, is defined as follows.
\begin{itemize}
	\item[\textbf{i)}] $X_2^*$ has a single 0-cell denoted by $\ast$;
	\item[\textbf{ii)}] the 1-cells of $X_2^*$ are the words in $X_1^*$, and the $\star_0$ composition is concatenation;
	\item[\textbf{iii)}] the 2-cells are of the form $w\star_0\alpha\star_0 w': wuw' \Longrightarrow wvw'$, for $t,v\in X_1^*$ and $\alpha:u\Longrightarrow u' \in X_2$. We graphically present them as 
		$$
		{\xymatrix@!C@C=4em{
				\ast&
				\ast
				\ar@{<-} [l]_w
				\ar@/^3ex/ [r] ^{u}="src"
				\ar@/_3ex/ [r] _{v}="tgt"
				& \ast 
				\ar@2 "src"!<0pt,-10pt>;"tgt"!<0pt,10pt> ^\alpha & \ar@{<-}[l]_{w'} \ast.
		}}
		$$
		The $\star_1$ composition of 2-cells $w \star_0 \alpha \star_0 w'$ and  $w_1\star_0\beta \star_0 w_1'$, satisfying $w_1t_1(\beta)w_1'=ws_1(\alpha)w'$ is given by
		$$
		(w\star_0 \alpha \star_0 w')\star_1 (w_1\star_0 \beta \star_0 w_1'): wuw' \Longrightarrow wvw' \Longrightarrow w_1zw_1' .
		$$
		Finally the set of 2-cells of is quotiented by the congruence generated by the relations
		\begin{equation}\label{eq:peiffer branchings} \tag{FM1}
		\alpha w v\star_1 u'w \beta \equiv uw\beta \star_1\alpha w v'
		\end{equation}
		for $\alpha:u\Longrightarrow u'$, $\beta:v\Longrightarrow v'$ in $X_2$, and $w\in X_1^*$. 
	
		The $\star_0$ composition of $2$-cells is given by
	\begin{equation}\label{eq:ugly}\tag{FM2}
	\begin{array}{cc}
	&(u_1\alpha_1u_1'\star_1 \cdots \star_1 u_m\alpha_m u'_m)\star_0 (v_1\beta_1 v_1' \star_1\cdots \star_1 v_n\beta_n v_n')\\ &
	= u_1\alpha_1 u_1'v_1s(\beta_1)v_1'\star_1\cdots \star_1 u_m\alpha_m u_m'v_1s(\beta_1)v_1 \star_1\cdots '\\ &
	\star_1 u_m t(\alpha_m)u_m'v_1\beta_1 v_1' \star _1 \cdots \star_1 u_m t(\alpha_m)u_m'v_n\beta_nv_n'.
	\end{array}
	\end{equation}
\end{itemize}

Similarly a 2-polygraph generates a free (2,1)-monoid from a given 2-polygraph, that is one where all the 2-cells are invertible. For this, we consider the polygraph \textit{dual} to $X$, that is the pair $X^-:=(X_1,X_2^-)$ where
$
X_2^-:=\{t_1(\alpha) \Longrightarrow s_1(\alpha) \ |\ \alpha\in X_2\}.
$
Then the \textit{free (2,1)-monoid} generated by $X$, denoted $X_2^\top$, is the free 2-monoid generated by the 2-polygraph $(X_1, X_2 \sqcup X_2^-)$, with set of 2-cells quotiented by the congruence generated by the relations
\begin{equation}\label{eq:equations for x2top} \tag{FM3}
u\alpha v \star_1 u\alpha^-v \equiv 1_{us(\alpha)v}\quad \text{and}\quad u\alpha^-v\star_1u\alpha v \equiv 1_{ut(\alpha)v}.
\end{equation}
We note that by definition, every $2$-cell of $X_2^\top$ is invertible. Moreover two words $u,v\in x_1^*$ represent the same element in the monoid $\ov{X}$ if and only if there exists a $2$-cell $f:u\Longrightarrow v$ in $X_2^\top$. Note that such a $2$-cell is a \textit{zig-zag} rewriting sequence composed of the rewriting steps of $X_2$, i.e. is of the form
$$
u=u_1 \stackrel{\gamma_1}{\Longrightarrow}u_2\stackrel{\gamma_2}{\Longleftarrow} u_3 \stackrel{\gamma_3}{\Longrightarrow}\cdots \stackrel{\gamma_k}{\Longleftarrow} u_{k+1}=v.
$$
with $\gamma_{2i-i}\in \seq(X)$ and $\gamma_{2i}\in\seq(X^-)$ for $1 \leq i \leq k/2$.

\subsubsection{Congruences on 2-monoids}
We recall here the notion of congruences on a 2-monoid.

Let $\mac{C}$ be a $2$-monoid. A \textit{2-sphere} of $\mac{C}$ is a pair $(f,g)$ of $2$-cells of $\mac{C}$ such that
$
s_1(f)= s_1(g)=p$ and $t_1(f)=t_1(g)=q,
$
for some $p,q\in X_1^*$. We also say that $f$ and $g$ are \textit{parallel} $2$-cells. We denote the set of 2-spheres of $\mac{C}$ by $\text{Sph}(\mac{C})$.
%	\begin{equation}\label{eq:diagram of spheres}
%	\xymatrix@!C@C=6em{
%		\ast
%		\ar@/^3ex/ [r] ^{p}="src"
%		\ar@/_3ex/ [r] _{q}="tgt"
%		& \ast.
%		\ar@2 "src"!<-15pt,-10pt>;"tgt"!<-15pt,10pt> _f="kiki"
%		\ar@2 "src"!<15pt,-10pt>;"tgt"!<15pt,10pt> ^g="titi"
%	}
%	\end{equation}
A \textit{congruence} on the $2$-monoid $\mac{C}$ is an equivalence relation $\equiv$ on the parallel $2$-cells of $\mac{C}$, i.e. the relations are of the form $f\equiv g$ with $(f,g)\in\text{Sph}(\mac{C})$, which we present graphically as
\begin{equation}\label{eq:2-sphere filled}
\xymatrix@!C@C=6em{
	\ast
	\ar@/^3ex/ [r] ^{p}="src"
	\ar@/_3ex/ [r] _{q}="tgt"
	& \ast,
	\ar@2 "src"!<-15pt,-10pt>;"tgt"!<-15pt,10pt> _f="kiki"
	\ar@2 "src"!<15pt,-10pt>;"tgt"!<15pt,10pt> ^g="titi"
	\ar@3@{-} "kiki"!<12pt,0pt>; "titi"!<-12pt,0pt>
}
\end{equation}
such that for any $2$-cells $h,k$ of $\mac{C}$ such that $h\star_1 f \star_1 k$ is defined, and $1$-cells $w,w'$ of $\mac{C}$ such that $w\star_0 (h\star_1 f\star_1 k) \star_0 w'$ is defined, we have
$$
w \star_0 (h\star_1 f\star_1 k)\star _0 w' \equiv w\star_0 (h\star_1 g\star_1 k)\star_0 w'.
$$
Graphically this means that if $f\equiv g$ as in \eqref{eq:2-sphere filled}, then the following $2$-sphere is also in the relation $\equiv$
$$
\xymatrix@!C@C=6em{
	\ast \ar@{->}[r] ^ w&\ast \ar@/^7ex/[r]_{\ }="manjaro" \ar@/_7ex/[r]^{\ }="kili"
	\ar@/^3ex/ [r] ^{\ }="src"
	\ar@/_3ex/ [r] _{\ }="tgt"
	& \ast\ar@{->}[r] ^{w'} & \ast.
	\ar@2 "src"!<-15pt,-10pt>;"tgt"!<-15pt,10pt> _f="kiki"
	\ar@2 "src"!<15pt,-10pt>;"tgt"!<15pt,10pt> ^g="titi"
	\ar@2 "manjaro"!<0pt,10pt>;"src"!<0pt,-10pt> ^h
	\ar@2 "tgt"!<0pt,10pt>;"kili"!<0pt,-10pt> ^k
	\ar@3@{-} "kiki"!<12pt,0pt>; "titi"!<-12pt,0pt>
}
$$
The \textit{quotient 2-monoid} of $\mac{C}$ by $\equiv$, denoted $\mac{C}/\equiv$, is the 2-monoid defined as follows:
\begin{itemize}
	\item[\textbf{i)}] the $0$-cells and $1$-cells are those of $\mac{C}$;
	\item[\textbf{ii)}] the $2$-cells are the equivalence classes of the $2$-cells of $\mac{C}$ under the congruence $\equiv$.
\end{itemize}

A \textit{cellular extension} of $\mac{C}$ is a collection of 2-spheres of $\mac{C}$. More precisely, it is a set $\Omega$ along with a $2$-source and a $2$-target map
$
\text{Sph}(\mac{C})\doubleleftarrow{t_2}{s_2}\Omega
$
satisfying the globular relations
$
s_1s_2 = s_1 t_2$ and $t_1s_2=t_1t_2.
$
For a cellular extension $\Omega$ of $\mac{C}$, denote by $\equiv_\Omega$ the congruence on $\mac{C}$ generated by the relations
$$
\xymatrix{s_2(\gamma) \ar@3 [r] & t_2(\gamma)}\quad \text{for }\gamma\in\Omega.
$$
We then denote by $\mac{C}/\Omega$ the quotient monoid of $\mac{C}$ by the congruence $\equiv_\Omega$.

\subsubsection{Coherent presentations of monoids}
Next we state the definition of a coherent presentation of a monoid in terms of the categorical language we have used so far. We begin by recalling the following 3-dimensional counterpart to the notion of a 2-polygraph.

A \textit{(3,1)-polygraph} is a triple of sets $X=(X_1,X_2,X_3)$, along with source and target maps
	$$
	X_1^* \doubleleftarrow{t_1}{s_1} X_2^\top \doubleleftarrow{t_2}{s_2} X_3
	$$
	satisfying the \textit{globular relations}
	$s_is_{i+1} = s_it_{i+1}$ and $t_is_{i+1}=t_it_{i+1}$ for $i=0,1$.
Note that for a (3,1)-polygraph $X=(X_1,X_2,X_3)$, the pair $(X_1,X_2)$ along with the source and target maps $s_1,t_1:X_2\longrightarrow X_1^*$ is a 2-polygraph. The notion of coherent presentations is then formulated in terms of (3,1)-polygraphs as follows.
\begin{defin}\ Let $M$ be a monoid, and $\mac{C}$ a $2$-monoid.
	\begin{itemize}
		\item[{\bf i)}]	A cellular extension $\Omega$ of $\mac{C}$ is called \textit{acyclic} if for any $2$-sphere $(f,g)$ of $\mac{C}$ we have $f\sim_\Omega g$.
		\item[{\bf ii)}] A \textit{coherent presentation for}  $M$ is a $(3,1)$-polygraph $X=(X_1,X_2,X_3)$ such that the underlying $2$-polygraph $(X_1,X_2)$ presents $M$, and $X_3$ is an acyclic cellular extension of the free $(2,1)$-category $X_2^\top$.
	\end{itemize} 
\end{defin}

Squier's coherence theorem asserts how one obtains a coherent presentation from a convergent one as follows. Let $X$ be a convergent $2$-polygraph. A \textit{family of generating confluences} of $X$ is a cellular extension $\Omega$ of the free $(2,1)$-category $X_2^\top$ that contains precisely one $3$-cell
\begin{equation}\label{eq:conf diag norm strat}
\xymatrix@!R@R=1em{ & v \ar@2@/^/ [rd] ^{f'} \ar@3 [dd] ^{A_{f,g}}& \\ u \ar@2@/^/ [ru] ^f \ar@2@/_/ [rd] _g & & u' \\ & w \ar@2@/_/ [ru]_{g'} &
}
\end{equation}
for each critical branching $(f,g)$ of $X$.	Such a family always exists, due to $X$ being confluent, though it is not necessarily uniquely determined. We then have the following result due to Squier \cite{squier1994finiteness}.

\begin{thm}[\cite{squier1994finiteness}, Squier's coherence theorem] \label{thm:Squier} Let $X$ be a convergent $2$-polygraph presenting a monoid $M$, and let $\Omega$ be a family of generating confluences for $X$. Then the $(3,1)$-polygraph $(X,\Omega)$ is a coherent presentation for $M$.
\end{thm}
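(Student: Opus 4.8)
The plan is to verify the only nontrivial condition, namely that $\Omega$ is \emph{acyclic}: for every $2$-sphere $(f,g)$ of $X_2^\top$ one must exhibit a chain of elementary $\Omega$-moves witnessing $f\equiv_\Omega g$. Since $X$ is convergent, every word $w\in X_1^*$ has a unique normal form $[w]$, and I would begin by fixing, for each $w$, a positive rewriting sequence $\sigma_w\colon w\Longrightarrow[w]$ in $X_2^*$ (a normalization strategy); such a $\sigma_w$ exists by termination together with confluence. The whole problem then reduces to the single coherence identity
\begin{equation*}
f\star_1\sigma_v\ \equiv_\Omega\ \sigma_u\qquad\text{for every }2\text{-cell }f\colon u\Longrightarrow v\text{ of }X_2^\top,
\end{equation*}
because once this holds for arbitrary $f$, a $2$-sphere $(f,g)$ with $f,g\colon u\Longrightarrow v$ gives $f\star_1\sigma_v\equiv_\Omega\sigma_u\equiv_\Omega g\star_1\sigma_v$, whence $f\equiv_\Omega g$ after composing on the right with $\sigma_v^-$ (available since every $2$-cell of $X_2^\top$ is invertible, with $\sigma_v\star_1\sigma_v^-=1_v$ by \eqref{eq:equations for x2top}).

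The first substantive step is a \emph{coherent local confluence} lemma: every local branching of $X$ closes up by a confluence diagram lying in the congruence $\equiv_\Omega$. Here I would use the trichotomy recalled in \ref{subsec:bra and conf}. Aspherical branchings are trivially coherently confluent; Peiffer branchings $(fv,ug)$ admit the canonical confluence pair $(u'g,fv')$, which is in fact an \emph{equality} of $2$-cells in $X_2^\top$ by the exchange relation \eqref{eq:peiffer branchings}; and every overlapping local branching is, by minimality with respect to $\sqsubseteq$, of the form $(wfw',wgw')$ for a critical branching $(f,g)\in\text{Crit}(X)$ and words $w,w'\in X_1^*$. For this last case I would take the generating confluence \eqref{eq:conf diag norm strat} attached to $(f,g)$ in $\Omega$ and whisker it by $w,w'$, using that $\equiv_\Omega$ is by construction stable under the $\star_0$ action of $X_1^*$ on both sides and under $\star_1$-pasting with $2$-cells.

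The technical heart is then a coherent version of Newman's Lemma, proving the displayed identity by Noetherian induction on the source $u$ along the terminating relation $\Longrightarrow$. It suffices to treat $f$ a single rewriting step; if $u$ is a normal form there is nothing to prove, and otherwise the chosen strategy factors as $\sigma_u=f_0\star_1\sigma_{u_0}$ with $f_0$ a first rewriting step. The pair $(f,f_0)$ is a local branching, so the coherent local confluence lemma supplies $2$-cells $f',g'$ to a common word with $f\star_1 f'\equiv_\Omega f_0\star_1 g'$. Applying the induction hypothesis at the strictly smaller sources $v$ and $u_0$ lets me rewrite the tails $f'$ and $g'$ into the corresponding normalization sequences, and pasting yields $f\star_1\sigma_v\equiv_\Omega\sigma_u$. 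The delicate point—and the step I expect to require the most care—is the bookkeeping of these $\star_1$-pastings, so that each substitution is a genuine elementary $\Omega$-move (an application of a generating $3$-cell, possibly whiskered by $1$-cells and pasted with $2$-cells) and not merely an equality of underlying words; this is exactly where the definition of the congruence generated by a cellular extension, and the invertibility of $2$-cells in $X_2^\top$, must be invoked explicitly.

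Finally, to promote the identity from single steps to arbitrary $2$-cells $f$ of $X_2^\top$, recall that such an $f$ is a zig-zag of steps drawn from $X_2$ and $X_2^-$. I would extend the coherence identity along the length of the zig-zag: the step case gives it for positive sequences by telescoping, and composing with inverses together with the relations \eqref{eq:equations for x2top} propagates it across each backward step, yielding $h\star_1\sigma_{t_1(h)}\equiv_\Omega\sigma_{s_1(h)}$ for every $2$-cell $h$ of $X_2^\top$. This establishes the reduced coherence statement for all $f$, hence—by the reduction of the first paragraph—the acyclicity of $\Omega$; since $(X_1,X_2)$ presents $M$ by hypothesis, $(X,\Omega)$ is a coherent presentation of $M$.
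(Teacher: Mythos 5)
Your argument is correct and is essentially the standard proof of Squier's theorem; the paper itself does not prove this statement but cites it, pointing to \cite{squier1994finiteness} and to \cite{guiraud2018polygraphs}, where acyclicity of $\Omega$ is established by exactly the paving scheme you describe (reduction to $f\star_1\sigma_{t_1(f)}\equiv_\Omega\sigma_{s_1(f)}$, coherent treatment of aspherical, Peiffer and overlapping local branchings, Noetherian induction along the terminating relation, and extension to zig-zags via invertibility). No gaps to report.
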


\subsubsection{Normalization strategies}\label{subsec:norm}
The work of Guiraud-Malbos in \cite{guiraud2012higher,guiraud2018polygraphs} extends on the result of Squier by providing a determinstic approach for algorithmically constructing the 3-cells in $\Omega$ via a notion of normalization strategies as follows. 

Let $X=(X_1,X_2)$ be a $2$-polygraph presenting a monoid $M$. Denote by $\pi:X_1^*\longrightarrow M$ the canonical projection, and denote the image $\pi(u)$ simply by $\overline{u}$. Let $s:M\longrightarrow X_1^*$ be a section, that is a map which to each element $u$ of the monoid $M$ associates a representative $\widehat{u}$ of its equivalence class. In other words the map $s$ satisfies $\pi\circ s = \text{id}_M$. We also assume that $\widehat{1}=1$.

A \textit{normalization strategy} for the pair $(M,s)$ is a map
$
\sigma: X_1^*\longrightarrow X_2^\top
$ 
which to each $1$-cell $u\in X_1^*$ associates a 'rewriting path' in $X_2^\top$ from $u$ to the chosen representative $\widehat{u}$. More precisely, for $u\in X_1^*$, $\sigma(u)=\sigma_u$ is a 2-cell in $X_2^\top$
$
\sigma_u: (u\Longrightarrow \widehat{u})
$
such that $\sigma(\widehat{u})=1_{\widehat{u}}$ for all $1$-cells $u\in X_1^*$.	The normalization strategy $\sigma$ is called \textit{left} (resp. \textit{right}) if the following is satisfied for all $u,v\in X_1^*$
$$
\sigma_{uv}=(\sigma_u\star_0 v)\star_1 \sigma_{\widehat{u}v}\quad (\text{resp.}\quad \sigma_{uv}=(u\star_0 \sigma_v)\star_1 \sigma_{u\widehat{v}}).
$$

We recall from \cite{guiraud2018polygraphs} that every $2$-polygraph admits a left (resp. right) normalization strategy. In particular, for a class of 2-polygraphs called reduced, there are two types of practical normalization strategies. We first recall this class of 2-polygraphs.

A $2$-polygraph $X=(X_1,X_2)$ is called \textit{reduced} if it satisfies the following conditions:
	\begin{itemize}
		\item[{\bf i)}] if $u\Longrightarrow v\in X_2$, then $u$ is a normal form for the $2$-polygraph $\big(X_1,X_2\setminus \{u\Longrightarrow v\}\big)$;
		\item[{\bf ii)}] if $u\Longrightarrow v\in X_2$, then $v$ is a normal form for the $2$-polygraph $X$.
	\end{itemize}

Let $X$ be a reduced $2$-polygraph. To a word $u\in X_1^*$ we associate a set
$
\text{Step}(X)_u:=\{f\in\text{Step}(X)\ |\ s(f)=u\}.
$
We define an order on $\step(X)_u$ by setting $f\preceq g$ if $f$ acts more to the left of $u$ than $g$, that is if there exist $t,t',v,v'\in X_1^*$ and $\alpha,\beta\in X_2$ such that
$
f=t\alpha t',\ g=v\beta v'$ and $|t|<|v|.
$
If the polygraph $X$ is finite, then $\text{Step}(X)_u$ is also finite, and since $X$ is reduced, the order $\preceq$ is total, thus there exist a minimal element $\lambda_u$ and a maximal element $\rho_u$ in $\text{Step}(X)_u$. We call these respectively the \textit{leftmost} and \textit{rightmost rewriting steps with source }$u$. Moreover if the polygraph $X$ is terminating, then an iteration of $\lambda$ (resp. $\rho$) gives rise to a normalization strategy called the \textit{leftmost (resp. rightmost) normalization strategy} of $X$ given by
\begin{equation}\label{eq:leftmost}
\sigma_u = \lambda_u\star_1 \sigma_{t(\lambda_u)}\quad \big(\text{resp. }\sigma_u=\rho_u\star_1 \sigma_{t(\rho_u)}\big).
\end{equation}
We can use the leftmost and rightmost normalization strategies to obtain a family of generating confluences for finite convergent $2$-polygraphs $X$ by completing the confluence diagrams of critical branchings of $X$ as in \eqref{eq:conf diag norm strat} with $f' = \sigma_v$ and $g'=\sigma_u$.
%\begin{equation}\label{eq:squiers diagram}
%\xymatrix@!R@R=1em{ & v \ar@2@/^/ [rd] ^{\sigma_v} \ar@3 [dd] ^A & \\ u \ar@2@/^/ [ru] ^f \ar@2@/_/ [rd] _g & & u' \\ & w \ar@2@/_/ [ru]_{\sigma_w} &
%}
%\end{equation}
%obtained from $\sigma$.

\subsection{Crystals and crystal monoids}

The notion of crystals arose in the work of Kashiwara \cite{kashiwara1991crystal} in his study of the representation theory of the quantum groups associated to complex symmetrizable Kac-Moody algebras $\mathfrak{g}$. They are combinatorial objects that encode the data of integrable representations of $\mathfrak{g}$. Here we recall the notion of crystals by following \cite{joseph}, as well as the plactic monoids of classical type $A_n$, $B_n$, $C_n$, $D_n$, and $G_2$ along with their finite convergent presentations due to \cite{cain2019crystal}.
\subsubsection{Algebraic definition of crystals}
Let $\mathfrak{g}$ be a complex symmetrizable Kac-Moody algebra with Cartan datum $(A,\Pi,\Pi^\vee,P,P^\vee)$. Denote its weight lattice by $P$, its set of dominant weights by $P^+$, its set of simple roots by $\{\alpha_i\}_{i\in I}$, and its set of simple coroots by $\{\alpha_i^\vee\}_{i\in I}$. We follow the definition of crystals from Joseph \cite{joseph} by only considering what he calls \textit{normal crystals}.

A \textit{crystal} for $\mathfrak{g}$ is a set $\Gamma$ along with the structure maps
\begin{equation}\label{def:crystal}
\begin{array}{ccc}
\text{wt}:\Gamma \longrightarrow P, \quad & \veps,\vphi:\Gamma \longrightarrow \mathbb{Z}, \quad & e_i,f_i:\Gamma \longrightarrow \Gamma \sqcup\{0\}
\end{array}
\end{equation}
for all $i\in I$, satisfying the following conditions for all $x\in \Gamma$ and $i\in I$:
\begin{itemize}
	\item[\textbf{(C1)}] $\vphi(x)-\veps(x) = \langle \text{wt}(b), \alpha_i^\vee \rangle$,
	\item[\textbf{(C2)}] $\veps(x) = \max\{n\in\mathbb{N}\ |\ e_i^n.x\neq 0\}$ and $\vphi(x) = \max\{n\in\mathbb{N}\ |\ f_i^n.x \neq 0 \}$,
	\item[\textbf{(C3)}] if $e_i.x \neq 0$ then $\text{wt}(e_i.x) = \text{wt}(x) + \alpha_i$, and if $f_i.x\neq 0$ then $\text{wt}(f_i.x) = \text{wt}(x) - \alpha_i$,
	\item[\textbf{(C4)}] $e_i.x = x'$ if an only if $f_i.x'=x$.
\end{itemize}
The map $\text{wt}$ is called the \textit{weight map}, and $e_i,f_i$ are called the \textit{Kashiwara operators}. Condition {\bf ii)} is what makes the crystal \textit{normal}. Note that in this case, the maps $\veps$ and $\vphi$ can be defined in terms of the Kashiwara operators, and in this article when we equip a set with a crystal structure, we shall only specify the definitions of the weight map and the Kashiwara operators. Given an integrable representation $V$ of $\mathfrak{g}$, it admits a crystal $\Gamma$ describing a basis of $V$, where the Kashiwara operators represent the action of the Chevalley generators of $\mathfrak{g}$, see \cite{kashiwara1995crystal}. A crystal $\Gamma$ is called a \textit{highest weight crystal of highest weight} $\lambda\in P^+$ if there exists a unique $x_{\lambda}\in \Gamma$ such that $e_i.x_{\lambda}=0$ for all $i\in I$, and such that any element of $\Gamma$ can be obtained by applying the $f_i$ operators to $x_\lambda$. For given $\lambda\in P^+$, such a crystal is unique and is denoted by $B(\lambda)$.

For two crystals $\Gamma_1, \Gamma_2$, their \textit{tensor product} $\Gamma_1\otimes \Gamma_2$ is equal to $\Gamma_1\times \Gamma_2$ as a set, has weight map given by $\text{wt}(xy)=\text{wt}(x) + \text{wt}(y)$, and the Kashiwara operators are on $u\otimes v \in \Gamma_1\otimes \Gamma_2$ via
\begin{equation}\label{eq:def of e}
e_i.(u\otimes v)=\left\{\begin{array}{cc}
(e_i.u)\otimes v & \text{if }\varphi_i(u)\geq \varepsilon_i(v),\\
u\otimes (e_i.v) & \text{if } \varphi_i(u) < \varepsilon_i(v),
\end{array}\right. ,\qquad f_i.(u\otimes v)=\left\{\begin{array}{cc}
(f_i.u)\otimes v & \text{if }\varphi_i(u)> \varepsilon_i(v),\\
u\otimes (f_i.v) & \text{if } \varphi_i(u) \leq \varepsilon_i(v).
\end{array}\right.
\end{equation}
Given a crystal $\Gamma$, we note that the free monoid $\Gamma^*$ inherits a crystal structure by viewing it as $\Gamma^*=\bigsqcup_{n=0}^\infty \Gamma^{\otimes n}$. We call $\Gamma^*$ the \textit{free crystal monoid generated by $\Gamma$}. For a word $w\in \Gamma^*$, we denote by $\bb_{\Gamma^*}(w)$ the \textit{connected component of $w$ in $\Gamma^*$}, which consists of all the vertices of $\Gamma^*$ obtained by successively applying the Kashiwara operators $f_i$ and $e_i$ to $w$.  Evidently $\bb_{\Gamma^*}(w)$ is also a crystal with the restricted structure maps.

A \textit{morphism} of crystals, called a \textit{strict morphism} in literature \cite{hong2002introduction,joseph,kashiwara1995crystal}, is a map $F : \Gamma_1\longrightarrow \Gamma_2$ that commutes with all the structure maps $\text{wt}, \veps,\vphi, e_i, f_i$, including the $0$ element. For $\lambda,\mu\in P^+$ we have the following analogue of Schur's Lemma, see \cite{hong2002introduction}
$$
\hom(B(\lambda),B(\mu)) = \left\{\begin{array}{ll}
\text{id}_{B(\lambda)} & \quad\text{if } \lambda = \mu,\\
\varnothing &\quad \text{otherwise}.
\end{array}\right.
$$
In \cite{henriques2006crystals} Hénriques-Kamnitzer consider the \textit{category of crystals} $\mathfrak{g}\textsc{-cryst}$ whose objects are normal crystals $\Gamma$ such that for any $x\in\Gamma$ we have $\bb_{\Gamma}(x)\cong B(\lambda)$ for some $\lambda\in P^+$, and the morphisms in $\mathfrak{g}\textsc{-cryst}$ are strict morphisms of crystals. This category is monoidal, that is closed under taking tensor products, hence in particular $\Gamma \in \mathfrak{g}\textsc{-cryst}$ implies $\Gamma^*\in\mathfrak{g}\textsc{-cryst}$. In this article the word crystal will refer to an object of $\mathfrak{g}\textsc{-cryst}$.

While the definition of crystals is heavily linked to its origins in the study of Lie algebras, one might speak of crystals in a more graph theoretic way. This approach is developed by Stembridge in \cite{stembridge2003local}, in his work of axiomatizing the crystals which arise from integrable representations. Further this approach was refined by Cain-Gray-Malheiro \cite{cain2019crystal} to give a graph-theoretic notion of crystals, sufficient for defining a notion of crystal monoid. In this approach, one speaks of crystals as directed $I$-labeled graphs $\Gamma$ satisfying the following two conditions:
\begin{itemize}
	\item[{\bf (P1)}] for any $x\in V(\Gamma)$ and $i\in I$, there exists at most one edge $e$ with source (resp. target) $x$ and label $i$;
	\item[{\bf (P2)}] for any $i\in I$, there exists no infinite directed path in $\Gamma$ with edges labeled by $i$.
\end{itemize}
Crystals fit into this scheme via the identification
$$
x \iar y \quad \text{if } e_i.y=x\quad (\text{equivalently if } f_i.x=y).
$$
The condition \textbf{(P1)} corresponds to the Kashiwara operators, and condition \textbf{(P2)} corresponds to the \textit{normality} condition for crystals. In particular the quantity $\veps(x)+\vphi$ gives the length of the longest $i$-labeled path in $\Gamma$ passing through the vertex $x$. Further, Cain-Gray-Malheiro consider an abstracted weight map $\text{wt}:\Gamma\longrightarrow P$, where $P$ is an ordered poset such that if $e_i.x \neq 0$ (resp. $f_i.x\neq 0$), then $\text{wt}(e_i.x) > \text{wt}(x)$ (resp. $\text{wt}(f_i.x)<\text{wt}(x)$.

In this article we stick to the formality of crystals in terms of the axioms \textbf{(C1)}-\textbf{(C4)}, though we illustrate certain notions and results via the graph theoretic approach due to Cain-Gary-Malheiro \cite{cain2019crystal}.
\subsubsection{Crystal monoids}\label{subsub:crystal monoids} Let $\mathfrak{g}$ be a complex symmetrizable Kac-Moody algebra, $\Gamma$ a crystal for $\mathfrak{g}$, and consider the free monoid $\Gamma^*$ along with the crystal structure on it. A \textit{crystal congruence} $\sim$ on $\Gamma^*$ is such that if $w_1\sim w_2$, then $\bb(w_1)\cong \bb(w_2)$. In \cite{cain2019crystal}, Cain-Gray-Malheiro consider the equivalence relation $\sim_\Gamma$ defined by setting $w_1\sim_\Gamma w_2$ if $\bb(w_1)\cong\bb(w_2)$. They show that this is a congruence on $\Gamma^*$, and they define the \textit{crystal monoid associated to $\Gamma$} as the quotient
$
C(\Gamma)=\Gamma^* / \sim_\Gamma.
$
Clearly $\sim_\Gamma$ is the largest crystal congruence on $\Gamma^*$.

\subsubsection{Crystal bases of classical type}\label{subsec:classical crystals}

For $\mathfrak{g}$ a complex semisimple Lie algebra of classical type $A_n$, $B_n$, $C_n$, $D_n$, and $G_2$, the following directed labeled graphs are the crystals corresponding to the fundamental representations $\Lambda_1(\mathfrak{g})$
\begin{equation}\label{eq:crystal A}
\begin{array}{lc}
A_n: & 1\stackrel{1}{\longrightarrow}2\stackrel{2}{\longrightarrow}\cdots \stackrel{n-2}{\longrightarrow}n-1 \stackrel{n-1}{\longrightarrow} n
\end{array}
\end{equation}

\begin{equation} \label{eq:crystal B}
\begin{array}{cc}
B_n: & 1\stackrel{1}{\longrightarrow}2\stackrel{2}{\longrightarrow}\cdots \stackrel{n-1}{\longrightarrow} n\stackrel{n}{\longrightarrow} 0 \stackrel{n}{\longrightarrow} \bar{n}\stackrel{n-1}{\longrightarrow}\cdots \stackrel{2}{\longrightarrow}\bar{2}\stackrel{1}{\longrightarrow} \bar{1}
\end{array}
\end{equation}

\begin{equation}\label{eq:crystal C}
\begin{array}{cc}
C_n: & 1\stackrel{1}{\longrightarrow}2\stackrel{2}{\longrightarrow}\cdots \stackrel{n-1}{\longrightarrow} n \stackrel{n}{\longrightarrow} \bar{n}\stackrel{n-1}{\longrightarrow}\cdots \stackrel{2}{\longrightarrow}\bar{2}\stackrel{1}{\longrightarrow} \bar{1}
\end{array}
\end{equation}

\begin{equation}
\begin{array}{cc} \xymatrix{
	& & & & & n \ar@1 [rd] ^n & & &\\ D_n: &
	1\ar@1 [r] ^1 & 2\ar@1 [r] ^2 & \cdots\cdots \ar@1 [r] ^{n-2} &n-1 \ar@1 [ru] ^{n-1} \ar@1 [rd]_n & & \overline{n-1}\ar@1 [r] ^{n-2}& \cdots\cdots \ar@1 [r] ^2 & \overline{2}\ar@1 [r] ^1& \overline{1}\\
	& & & & & \overline{n} \ar@1 [ru]_{n-1}& & & }
\end{array}
\end{equation}

\begin{equation}\label{eq:crystal G}
\begin{array}{cc}
G_2: 1 \stackrel{1}{\longrightarrow}2 \stackrel{2}{\longrightarrow} 3 \stackrel{1}{\longrightarrow} 0 \stackrel{1}{\longrightarrow} \bar{3} \stackrel{2}{\longrightarrow} \bar{2} \stackrel{1}{\longrightarrow} \bar{1}
\end{array}
\end{equation}
and are respectively called the \textit{crystal bases of classical type}. They are equipped with a weight map to the weight lattice $P$ of the corresponding Lie algebra. The exact definitions of the weight maps are not relevant for the contents of this article, though here we recall the weight map for $A_n$. The weight lattice $P$ is $n$-dimensional, and the weight map is defined as
$
\text{wt}(i) = (0,\dots, i,\dots, 0),
$
that is the $j$-th coordinate is given by the Kronecker delta $\delta_{ij}$.
The crystal monoids associated to the crystal bases $\Gamma=A_n, B_n, C_n, D_n, G_2$ are called the \textit{plactic monoids of type $\Gamma$}, and are denoted by $Pl(\Gamma)$. Clearly the crystal structure of $\Gamma^*$ descends to the plactic monoid $Pl(\Gamma)$.

\subsubsection{Combinatorial realization of plactic monoids} The plactic monoids of classical type are realized combinatorially, with their elements parameterized by notions of tableaux, and the product via certain insertion algorithms. Since the intricacies of these realizations are not crucial for the results of this paper, we shall recall the constructions in detail for type $A_n$, and in more general terms for the other types $B_n, C_n, D_n, G_2$. For a more detailed exposition see \cite{lothaire2002algebraic}.

The elements of $Pl(A_n)$ are parameterized by \textit{semi-standard Young tableaux}, which are a collection of left-adjusted rows of boxes, filled with entries from $A_n$ so that the entries are non-decreasing along rows, and increasing along columns. For instance the following is a semi-simple Young tableau for $A_5$
\begin{equation}\label{eq:ytab}
\begin{ytableau}
1 & 1 & 2 & 3\\
2 & 3 & 3\\
4 & 5
\end{ytableau}.
\end{equation}
The set of semistandard Young tableaux is denoted $\text{Tab}(A_n)$, and we have a \textit{reading map} $l_c:\text{Tab}(A_n) \longrightarrow A_n^*$, which reads the columns of a tableau $T$ from right to left, and top to bottom. For the tableau in \eqref{eq:ytab} we have $l_c(T)=3\ 23\ 135\ 124$. As sets we have $Pl(A_n) = \text{Tab}(A_n)$.

The product of $Pl(A_n)$ is parameterized by \textit{Schensted's insertion algorithm}. It is an algorithm that takes as input a word $w\in A_n^*$ and produces a tableau $P_c(w)\in\text{Tab}(A_n)$, which is a representative of the equivalence class of $w$ in $Pl(A_n)$. It is defined as a map $
\ins_c: A_n \times \yt n) \longrightarrow \yt n)
$
with $\ins_c(x,T)$ defined as follows

\begin{itemize}
	\item[1.] for $x\in A_n$, set $\ins_c\left(x,\boxed{\textcolor{white}{0}}\right)=\boxed{x}$;
	\item[2.] for $x\in A_n$, $T\in \yt n)$, denote by $x_1,x_2,\dots, x_k$ the entries of the first column of $T$, and by $T_1$ the tableau obtained by deleting the first column from $T$. Then $\ins_c(x,T)=T'$ where $T'$ is determined as follows:
	\begin{itemize}
		\item[2a.] if $x > x_k$, $T'$ is obtained by adding a box $\boxed{x}$ at the end of the first column of $T$;
		\item[2b.] if $x\leq x_k$, let $j$ be minimal with the property $x\leq x_{j}$. Then the first column of $T'$ is obtained by replacing $x_{j}$ with $x$, and its remaining columns are given by the tableau $\ins_c(x_{j},T_1)$.
	\end{itemize}
\end{itemize}
Via $\ins_c$ we have a map $P_c:A_n^*\longrightarrow \yt n)$ defined
inductively on the length of words of $A_n^*$ by setting $P_c(x)=\boxed{x}$ for $x\in A_n$, and $P_c(wx)=\ins_c(x,P_c(w))$ for $x\in A_n$ and $w\in A_n^*$. Then the product $\ast$ of the monoid $Pl(A_n)$ is encoded via the map $P_c$ as
\begin{equation}\label{eq:prod of tab}
T_1\ast T_2 = P_c(l_c(T_2)l_c(T_1)).
\end{equation}

Similar constructions exits for the other classical types as well due to the work of Lecouvey \cite{lecouvey2002schensted,lecouvey2003schensted,lecouvey2007part}. We summarize this as follows. Let $\Gamma=A_n,B_n,C_n,D_n,G_2$. There exists a notion of \textit{tableaux} in type $\Gamma$, with their set denoted by $\text{Tab}(\Gamma)$. As sets we have $Pl(\Gamma)=\text{Tab}(\Gamma)$. Moreover, there exists a reading map $l_c:\text{Tab}(\Gamma)\longrightarrow \Gamma^*$, and an \textit{insertion} map $P_c:\Gamma^* \longrightarrow \text{Tab}(\Gamma)$ such that the product of $T_1,T_2\in\text{Tab}(\Gamma)$ is computed via \eqref{eq:prod of tab}.

Since $Pl(\Gamma)=\text{Tab}(\Gamma)$, the set of tableau admits a crystal structure. We summarize here the interaction between the combinatorial and crystal realizations of plactic monoids.

\begin{prop}[\cite{lecouvey2007part}]\label{prop:P_c crystal morphism}
	The reading map $l_c: \text{Tab}(\Gamma)\longrightarrow \Gamma^* $ and the insertion map $P_c:\Gamma^* \longrightarrow \text{Tab}(\Gamma)$ are morphisms of crystals.
\end{prop}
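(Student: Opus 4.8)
The plan is to treat the two maps separately: the statement for $P_c$ follows almost formally from the way $\text{Tab}(\Gamma)$ acquires its crystal structure, whereas the statement for $l_c$ reduces to a single combinatorial closure property that is the genuine content.

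First I would recall that the crystal structure on $\text{Tab}(\Gamma)$ is precisely the one transported from $Pl(\Gamma)=\Gamma^*/\sim_\Gamma$ through the set identification $\text{Tab}(\Gamma)=Pl(\Gamma)$. Write $\pi:\Gamma^*\longrightarrow Pl(\Gamma)$ for the quotient projection and $\iota:\text{Tab}(\Gamma)\longrightarrow Pl(\Gamma)$ for this identification, so that $\iota^{-1}([w])=P_c(w)$ and $\iota(T)=[l_c(T)]$. Since $\sim_\Gamma$ is a crystal congruence, the crystal structure of $\Gamma^*$ descends to $Pl(\Gamma)$, so $\pi$ is a morphism of crystals; and $\iota$ is a crystal isomorphism by the very definition of the transported structure. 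As $P_c(w)=\iota^{-1}(\pi(w))$, the map $P_c=\iota^{-1}\circ\pi$ is a composite of crystal morphisms, hence a crystal morphism. This disposes of the insertion map.

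For $l_c$ I would first record two elementary facts: $P_c\circ l_c=\mathrm{id}_{\text{Tab}(\Gamma)}$ (reading a tableau and reinserting returns it), so $l_c$ is injective and each $\sim_\Gamma$-class contains exactly one reading word; and $\text{wt}(l_c(T))=\text{wt}(T)$, since reading preserves content. The decisive step is the \textbf{closure claim}: the image $S:=l_c(\text{Tab}(\Gamma))\subseteq\Gamma^*$ is a subcrystal, i.e.\ $e_i(l_c(T))$ and $f_i(l_c(T))$ are either $0$ or again reading words of tableaux. Granting this, the inclusion $S\hookrightarrow\Gamma^*$ is a crystal morphism, and $P_c|_S:S\longrightarrow\text{Tab}(\Gamma)$ is a crystal morphism which is bijective (surjective because $P_c\circ l_c=\mathrm{id}$; injective because $P_c(l_c(T))=T$) with two-sided inverse $l_c$. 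Since the inverse of a bijective crystal morphism is again a crystal morphism, $l_c=\bigl(S\hookrightarrow\Gamma^*\bigr)\circ(P_c|_S)^{-1}$ is a crystal morphism.

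It therefore remains to prove the closure claim, which is the main obstacle and the only genuinely combinatorial part. I would argue via the signature (bracketing) rule for the word operators on $\Gamma^*$: applied to any word, $e_i$ and $f_i$ alter exactly one letter, raising or lowering it by one step of the relevant crystal basis from \eqref{eq:crystal A}--\eqref{eq:crystal G}. The task is to show that when the input word is a column reading word $l_c(T)$, the unique letter selected by the bracketing rule sits in a position whose modification again yields the column reading word of a valid tableau, preserving weak increase along rows and strict increase (in the admissible-column sense of Kashiwara--Nakashima) along columns. This is exactly the assertion that the tableaux of a fixed shape $\lambda$, with the column reading word as embedding into $\Gamma^*$, realize the highest weight crystal $B(\lambda)$; one checks it type by type, the cases $B_n,C_n,D_n,G_2$ requiring the analysis of admissible columns and their splitting, following Lecouvey. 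As a consistency check one may verify that the reading word of the highest weight tableau of shape $\lambda$ is a highest weight word of $\Gamma^*$ (annihilated by every $e_i$), whence by Schur's Lemma in $\cryst$ its connected component is isomorphic to $B(\lambda)$ and $P_c$ restricts to an isomorphism onto the shape-$\lambda$ component of $\text{Tab}(\Gamma)$; the closure claim is then what guarantees that every reading word of shape $\lambda$ lands in this single component.
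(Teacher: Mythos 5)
The paper offers no proof of this proposition: it is imported wholesale from \cite{lecouvey2007part} (ultimately resting on the Kashiwara--Nakashima/Lecouvey theorem that tableaux of shape $\lambda$ realize $B(\lambda)$), so there is no internal argument to compare yours against. Your formal scaffolding is correct and, in my view, correctly organized for the conventions of this paper: since the crystal structure on $\text{Tab}(\Gamma)$ is the one transported through the set identification $\text{Tab}(\Gamma)=Pl(\Gamma)$, the map $P_c=\iota^{-1}\circ\pi$ is a crystal morphism essentially by definition (the fact that $\pi$ is a crystal morphism is established by the paper only later, in Section 3, but it is an elementary consequence of $\sim_\Gamma$ being a crystal congruence, so the ordering is harmless). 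You are also right that the entire mathematical content then concentrates in the closure claim for $l_c$, that $e_i(l_c(T))$ and $f_i(l_c(T))$ are again reading words of tableaux, and your derivation of the proposition from that claim (subcrystal inclusion, bijective strict morphism, inverse of a bijective strict morphism is strict) is sound, including the handling of the $0$ cases.

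The one thing to be clear-eyed about is that the closure claim is not a lemma you reduce the problem to and then dispatch; it \emph{is} the theorem. Your paragraph about the signature rule selecting a single letter and the type-by-type analysis of admissible columns and their splitting describes the shape of Lecouvey's proof but does not carry it out, and the Schur's Lemma "consistency check" at the end cannot substitute for it (as you yourself note, connectivity of the shape-$\lambda$ component is a consequence of closure, not a route to it). Since the paper itself only cites \cite{lecouvey2007part}, deferring to the same source is a defensible choice, but you should then state the closure claim as an explicit citation (e.g.\ to the relevant proposition of Lecouvey or of Kashiwara--Nakashima for each type) rather than as something "one checks"; as written, the only genuinely combinatorial step of your proof is a promissory note.
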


\subsubsection{Column presentation of plactic monoids}\label{subsec:col pres} Let $\Gamma=A_n,B_n,C_n,D_n,G_2$. The work of Cain-Gray-Malheiro \cite{cain2019crystal} makes use of the insertion map $P_c$ on a set of words in $\Gamma^*$, called \textit{admissible columns}, to give finite convergent presentations for $Pl(\Gamma)$. In type $A_n$, admissible columns coincide with the notion of \textit{columns}, which are simply words $w=x_1x_2\cdots x_k$ with $x_1<x_2<\dots<x_k$, while in other types the notion is more involved. The relevant information is that the set $\col(\Gamma)_1$ of admissible columns in type $\Gamma$ satisfies the two following conditions
\begin{itemize}
	\item[\textbf{i)}] $\col(\Gamma)_1$ is a crystal with the restricted structure maps of $\Gamma^*$,
	\item[\textbf{ii)}] for $c\in\col(\Gamma)_1$ we have $P_c(c)=c$.
\end{itemize}

In order to construct finite convergent presentations of $Pl(\Gamma)$ in \cite{cain2019crystal}, the authors first prove the following result.
\begin{prop}[\cite{cain2019crystal}, 2-column lemmata] \label{prop:2 col lemmata}Let $\Gamma=A_n,B_n,C_n,D_n$ and $c_1,c_2$ be two admissible columns in type $\Gamma$. Then the tableau $P_c(c_1c_2)$ consists of at most two columns. If $\Gamma=G_2$, then $P_c(c_1c_2)$ consists of at most three columns.
\end{prop}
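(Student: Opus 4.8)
The plan is to exploit the crystal-morphism property of $P_c$ (Proposition \ref{prop:P_c crystal morphism}) in order to reduce the statement to a computation at highest weight, where the insertion becomes transparent. The key observation is that the number of columns of a tableau $T\in\text{Tab}(\Gamma)$ equals the length of the first row of its shape, and that this is an invariant of the connected component $\bb(T)$ of $T$ in $\text{Tab}(\Gamma)$: each such component is isomorphic to some $B(\lambda)$ and consists of tableaux of a single fixed shape $\lambda$, since the Kashiwara operators $e_i,f_i$ preserve the shape of a tableau.

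First I would set up the reduction. Since $P_c$ commutes with the Kashiwara operators and preserves the element $0$, it maps the component $\bb(c_1c_2)\subseteq\Gamma^*$ onto the component $\bb\big(P_c(c_1c_2)\big)\subseteq\text{Tab}(\Gamma)$, and in particular sends the highest weight vertex $w_0$ of $\bb(c_1c_2)$ to a highest weight tableau (as $e_i w_0=0$ forces $e_i P_c(w_0)=0$ for all $i\in I$). Consequently the number of columns of $P_c(c_1c_2)$ equals that of $P_c(w_0)$, which is exactly the length of the first row of the shape associated to the dominant weight $\text{wt}(w_0)$. Thus it suffices to show that every dominant weight $\lambda$ with $B(\lambda)$ a connected component of $\bb(c_1)\tens\bb(c_2)$ corresponds to a shape with at most two columns (at most three when $\Gamma=G_2$), as $c_1,c_2$ range over admissible columns.

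Next I would identify the highest weight vertices of $\col(\Gamma)_1\tens\col(\Gamma)_1$. A single admissible column $c$ generates a component $\bb(c)\cong B(\lambda_c)$ with a unique source $c^0$ obtained by iterating the $e_i$; writing out the tensor product rule \eqref{eq:def of e} for the condition $e_i(c_1\tens c_2)=0$ for all $i\in I$ then yields explicit combinatorial constraints on the pair $(c_1,c_2)$. In type $A_n$, where admissible columns are the strictly increasing words, these constraints recover the classical description of highest weight products, and the bound $\lambda_1\leq 2$ follows from the dual (column) Pieri rule: $\lambda_{c_1}$ is a single column and tensoring with a second column adds a vertical strip, i.e. at most one box per row, so each resulting shape has at most two boxes in every row. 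For each such highest weight pair one may alternatively compute $P_c$ directly from the insertion rules 2a--2b, using $P_c(c)=c$ on admissible columns, and read off that the result has at most two columns.

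The main obstacle lies in the remaining types. In types $B_n,C_n,D_n$ the admissibility conditions on columns --- those of Kashiwara--Nakashima, involving the barred letters and the letter $0$ --- make both the enumeration of highest weight pairs $(c_1,c_2)$ and the subsequent insertion computation considerably more delicate than in type $A_n$, and the naive vertical-strip argument no longer applies verbatim, since the relevant tensor decomposition (due to Lecouvey) involves a reduction step on the barred entries. Here I would carry out the highest weight analysis type by type, checking in each case that no highest weight product of two admissible columns carries a weight whose shape has three or more columns. Type $G_2$ is genuinely exceptional: since $\mathfrak{g}$ has rank $2$ and its fundamental crystal \eqref{eq:crystal G} has only seven vertices, admissible columns are short and the set of highest weight products is finite and small, so the bound --- and the fact that it is $3$ rather than $2$, witnessed by an explicit highest weight pair inserting to a three-column tableau --- reduces to a finite verification over $\bb(c_1)\tens\bb(c_2)$.
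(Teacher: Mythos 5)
The paper does not actually prove this proposition: it is recalled verbatim from Cain--Gray--Malheiro \cite{cain2019crystal}, who establish it by a direct case analysis of Lecouvey's column insertion algorithms, so there is no internal proof to compare your argument against. Judged on its own terms, your opening reduction is sound and is very much in the spirit of this paper: since $P_c$ is a strict crystal morphism (Proposition \ref{prop:P_c crystal morphism}) and the Kashiwara operators preserve the shape of a Kashiwara--Nakashima tableau, the number of columns of $P_c(c_1c_2)$ is constant on $\bb_{\Gamma^*}(c_1c_2)$ and equals the length of the first row of the shape attached to $\mathrm{wt}(w_0)$, where $w_0$ is the highest weight vertex of that component. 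This correctly reduces the proposition to a statement about which $B(\lambda)$ occur in $\bb(c_1)\otimes\bb(c_2)$, and in type $A_n$ the vertical-strip (dual Pieri) argument does close the case.

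The genuine gap is that for $B_n$, $C_n$, $D_n$ and $G_2$ you have only restated the problem, not solved it. The entire content of the proposition in those types is the claim that every component of $\bb(c_1)\otimes\bb(c_2)$ has highest weight whose associated shape has at most two (resp.\ three) columns, and your proposal explicitly acknowledges that the vertical-strip argument fails there and then says you ``would carry out the highest weight analysis type by type'' without doing so. This is precisely where the difficulties live: the components $\bb(c)$ of admissible columns are not always $B(\omega_k)$ for a fundamental weight (e.g.\ the height-$n$ columns in types $B_n$ and $D_n$ involve doubled spin weights), the tensor decomposition rules in these types involve Lecouvey's contraction step on barred letters rather than a plain vertical strip, and in $G_2$ one must actually exhibit the component (such as $B(3\omega_1)\subseteq B(\omega_2)\otimes B(\omega_2)$) forcing the bound to be three rather than two. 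As written, the proposal is a correct reduction followed by an unexecuted verification of the statement's main content, so it cannot be accepted as a proof; either the type-by-type highest weight computation must be supplied, or one should simply cite \cite{cain2019crystal} as the paper does.
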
 
They use this result to define the \textit{column presentation} of type $\Gamma$ as the 2-polygraph
$
\mathsf{Col}(\Gamma)=(\mathsf{Col}(\Gamma)_1,\mathsf{Col}(\Gamma)_2),
$
where
$$
\mathsf{Col}(\Gamma)_2 = \big\{c_1c_2\Longrightarrow P_c(c_1c_2)\ |\ \text{if }c_1c_2 \neq P_c(c_1c_2)\big\},
$$
and prove the following.
\begin{thm}[\cite{cain2019crystal}]\label{thm:col pres is convergent}
	Let $\Gamma=A_n,B_n,C_n,D_n, G_2$. The $2$-polygraph $\col(\Gamma)$ is a finite convergent reduced presentation of $Pl(\Gamma)$.
\end{thm}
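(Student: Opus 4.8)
The plan is to establish the four assertions of the theorem in turn: that $\col(\Gamma)$ is finite, reduced, terminating, and confluent. Convergence is then terminating-plus-confluent, and the statement that $\col(\Gamma)$ presents $Pl(\Gamma)$ will fall out once its normal forms are identified with $\text{Tab}(\Gamma)=Pl(\Gamma)$. Finiteness is immediate: each alphabet $\Gamma$ is finite and admissible columns have bounded length, so $\col(\Gamma)_1$ is a finite set, whence the relation set $\col(\Gamma)_2\subseteq\col(\Gamma)_1\times\col(\Gamma)_1$ is finite as well.

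Reducedness I would verify structurally from the shape of the rules. Every left-hand side $c_1c_2$ is a word of length two over the alphabet $\col(\Gamma)_1$, so its only length-two factor is $c_1c_2$ itself and single columns carry no rule; since $\col(\Gamma)_2$ contains at most one rule with a given left-hand side, removing $c_1c_2\Rightarrow P_c(c_1c_2)$ leaves $c_1c_2$ irreducible, which is condition (i). For condition (ii), the right-hand side $P_c(c_1c_2)$ is a tableau, consisting of at most two columns (three in type $G_2$) by Proposition \ref{prop:2 col lemmata}; since a word of columns is irreducible for $\col(\Gamma)$ exactly when each of its adjacent column pairs already equals its own insertion tableau, and the consecutive columns of a genuine tableau satisfy this, $P_c(c_1c_2)$ is a normal form.

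I expect termination to be the main obstacle, because in types $B_n,C_n,D_n,G_2$ insertion may both redistribute and contract letters, and a single rule can even increase the number of columns (in $G_2$ two columns may insert to three), so neither the length of $w$ in $\col(\Gamma)_1^*$ nor the number of columns is monotone. I would therefore work at the level of the underlying $\Gamma$-word and use a lexicographic measure $(\ell(w),\Psi(w))$, where $\ell(w)$ is the total number of $\Gamma$-letters of $w$ and $\Psi(w)=\sum_{i<j}\#\{(x,y): x\in c_i,\ y\in c_j,\ x>y\}$ counts the inversions between entries of earlier and later columns. Insertion never increases $\ell$, so any genuinely contracting rule strictly drops the first coordinate; for a length-preserving rule the multiset of entries of the rewritten block is unchanged, so the inversions between that block and the surrounding columns are unaffected and only the within-block inversions change, and rewriting a non-tableau pair into its insertion tableau strictly lowers this inversion count. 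Thus $(\ell,\Psi)$ strictly decreases under every rule and, being valued in $\mathbb{N}^2$ with the lexicographic order, is well-founded. The real work, and the expected difficulty, is checking the strict decrease of $\Psi$ uniformly across all the type-dependent insertion procedures, which I would organize as a case analysis on the shape of $P_c(c_1c_2)$ afforded by Proposition \ref{prop:2 col lemmata}.

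With termination in hand, confluence follows from Newman's Lemma together with the Critical Pair Lemma: it suffices to confluence the critical branchings. Since every left-hand side has length two over $\col(\Gamma)_1$ there are no inclusion critical branchings, and the overlaps are precisely the words $c_1c_2c_3$ in which both $c_1c_2$ and $c_2c_3$ are rewritable. Rewriting preserves the insertion tableau by the associativity of Schensted insertion, $P_c(P_c(c_1c_2)\,c_3)=P_c(c_1c_2c_3)=P_c(c_1\,P_c(c_2c_3))$, which is exactly the well-definedness of the product of $Pl(\Gamma)$ recorded in Proposition \ref{prop:P_c crystal morphism} and \eqref{eq:prod of tab}; hence both one-step reducts normalize to the common tableau $P_c(c_1c_2c_3)$ and every critical branching is confluent. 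This proves $\col(\Gamma)$ convergent. Finally, the irreducible words are exactly the column readings of tableaux, so $\text{Nf}(\col(\Gamma))$ is in bijection with $\text{Tab}(\Gamma)=Pl(\Gamma)$; combined with the surjection $\overline{\col(\Gamma)}\twoheadrightarrow Pl(\Gamma)$ induced by the valid relations $c_1c_2\sim P_c(c_1c_2)$, this identifies $\overline{\col(\Gamma)}$ with $Pl(\Gamma)$ and completes the proof.
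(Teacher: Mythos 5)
First, a point of reference: the paper does not prove this statement at all --- Theorem \ref{thm:col pres is convergent} is imported verbatim from \cite{cain2019crystal}, so there is no in-paper proof to compare against. Judged on its own terms (and against the argument in the cited reference), your overall architecture --- finiteness, reducedness, a termination measure, confluence via common reducts to $P_c(c_1c_2c_3)$, and identification of normal forms with $\text{Tab}(\Gamma)$ --- is the right skeleton, and the finiteness, reducedness, and confluence-given-termination parts are essentially sound.

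The genuine gap is the termination measure. Your second coordinate $\Psi$ is controlled by the claim that a length-preserving rule leaves \emph{the multiset of $\Gamma$-letters} of the rewritten block unchanged, so that cross-inversions with the surrounding columns are unaffected. That is a type-$A_n$ fact: there the plactic relations are permutations of letters. In types $B_n$, $D_n$ and $G_2$ the only invariant of a relation is the \emph{weight}, not the multiset --- the defining relations (and hence the insertion $P_c$) can trade weight-zero configurations such as $n\bar{n}$, $\bar{n}n$, $0\,0$, $i\,\bar{\imath}$ for one another while preserving length. Once the multiset of the block changes, the inversions between the block and an arbitrary ambient column $c_j$ change as well, so $\Psi$ need not decrease, and the whole lexicographic measure collapses precisely in the types where termination is genuinely delicate. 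Even in type $A_n$, the strict decrease of the within-block inversion count under $c_1c_2\Rightarrow P_c(c_1c_2)$ is asserted rather than proved, and you yourself flag this as ``the real work.'' The proof in \cite{cain2019crystal} avoids the issue by using a different well-founded order: compare first the length of the underlying $\Gamma$-word (which insertion never increases), and for length-preserving steps use the fact that if $c_1c_2$ is not already a tableau and no contraction occurs, then the left column of $P_c(c_1c_2)$ is strictly longer than $c_1$; the induced lexicographic order on the sequences of column lengths (columns being of bounded height) is well-founded and strictly decreases under every rule. I would replace your $\Psi$ by this column-length comparison, or at minimum restrict the inversion argument to type $A_n$ and treat the other types separately.

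One smaller remark on the confluence step: to close a critical branching you need that $P_c(c_1c_2)c_3$ actually \emph{rewrites} to $P_c(c_1c_2c_3)$, not merely that the two words have the same insertion tableau. This follows from the argument you give at the very end (every word reduces, by termination, to an irreducible word, irreducible words are exactly column readings of tableaux, and each rule preserves $P_c$), but that chain of facts should be established \emph{before} the critical-pair analysis, since as written the confluence paragraph quietly presupposes uniqueness of normal forms, which is what confluence is meant to deliver.
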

This opens up a direction of applying the techniques of rewriting theory to these presentations, as a way to obtain information on the plactic monoids of classical type.

\section{Rewriting theory for crystal congruences}\label{sec:K-rewriting}
In this section we consider a notion of crystal congruence, which generalizes the congruence $\sim_\Gamma$ in \ref{subsub:crystal monoids}. We thus obtain a class of crystal monoids, and introduce a notion of a 2-polygraph compatible with a crystal structure. We show that this compatibility extends the crystal structure to the sets rewriting sequences and confluence diagrams of the 2-polygraph. This interaction reduces the verification of the rewriting properties of termination and confluence to rewriting rules of highest weight. We thus obtain reduced variants of Newman's Lemma and the Critical Pair Lemma.

\subsection{Crystal congruences and crystal monoids}
%\subsubsection{Kashiwara group of $\mathfrak{g}$}
%Let $\mathfrak{g}$ be a complex symmetrizable Kac-Moody algebra, with its simple roots indexed by a set $I$. We consider the free group $\mathcal{K}(\mathfrak{g})=\mathcal{K}$ on $|I|$ generators, called the \textit{Kashiwara group of $\mathfrak{g}$}. We relate this group to the Kashiwara operators via the presentation
%$$
%\mathcal{K}:= \langle e_i, f_i\ |\ e_if_i=f_ie_i = 1\rangle.
%$$
%Consider a crystal $\Gamma$ for $\mathfrak{g}$. Then there is a partial action of $\mac{K}$ on $\Gamma\sqcup \{0\}$ defined as follows. For $k=k_1k_2\cdots k_t$ a reduced expression with $k_i\in \{e_i,f_i\}$, and $x\in\Gamma$, we say that $k.x$ is defined if:
%\begin{itemize}
%	\item[\textbf{i)}] $k_t.x$ is defined,
%	\item[\textbf{i)}] $k_{l}.((k_{l+1}\cdots k_t).x)$ is defined for $l=t-1,t-2,\dots,1$.
%\end{itemize}
%Then given $x\in \Gamma$, we can realize its connected component $\bb_\Gamma(x)$ as $\mac{K}.x := \{k.x\ |\ k\in \mac{K}\ \text{such that } k.x \text{ is defined}\}.$
%We note here how the Kashiwara group acts on $\Gamma^*$.
%
%\begin{prop}
%	Let $A,B$ be crystals, $k=k_1\cdots k_t\in\mac{K}$ a reduced expression, and $a\in A,\ b\in B$. Then there exist a subsets $I_a \sqcup I_b = [1,t]$ such that $k.(a\otimes b) = (k'.a)\otimes (k''.b)$, where $k'=\prod_{i\in I_a} k_i$, and $k'' = \prod_{i\in I_b} k_i$.
%\end{prop}

We begin by introducing a notion of a congruence which is compatible with the crystal structure.
\begin{defin}\label{def:crystal congruences}
	Let $\Gamma$ be a crystal. A \textit{crystal congruence} $\sim$ on $\Gamma^*$ is such that $w_1 \sim w_2$ implies $\bb_{\Gamma^*}(w_1)\cong \bb_{\Gamma^*}(w_2)$ for any $w_1,w_2\in \Gamma^*$. The monoid $M=(\Gamma^*/\sim)$ presented by the congruence is called a \textit{crystal monoid}.
\end{defin}

Let $\Gamma$ be a crystal, $\sim$ a crystal congruence on $\Gamma^*$, and $M$ the monoid presented by $\sim$. The crystal structure of $\Gamma^*$ descends to $M$ as follows. Let $m\in M$, and $w\in\Gamma^*$ be a representative of $m$, that is $[w]=m$. We define the structure maps on $M$ by setting:
$$
\text{wt}(m)=\text{wt}(w), \quad e_i.m=[e_i.w]\quad (\text{resp, }f_i.m=[f_i.w])
$$ 
if $e_i.w\neq 0$ (resp. $f_i.w\neq 0$), and otherwise $e_i.m=0$ (resp. $f_i.m=0$). Note that the definitions of the structure maps are independent of the choice of representative. Indeed, if $m=[w_1]=[w_2]$ we have that $w_1\sim w_2$ which implies $\bb(w_1)\cong \bb(w_2)$, hence the values of the structure maps of $\Gamma^*$ are identical on $w_1$ and $w_2$. This justifies calling $M$ a crystal monoid, and moreover this argument shows that the canonical projection $\pi:\Gamma^* \longrightarrow M$ is a crystal morphism. Recall the full crystal congruence $\sim_\Gamma$ from \ref{subsub:crystal monoids}. Then since any crystal congruence $\sim$ is a subcongruence of $\sim_\Gamma$, we obtain the following

\begin{prop}\label{prop:crystal monoids}
	Let $\Gamma$ be a crystal, and $\sim$ a crystal congruence on $\Gamma^*$, presenting a monoid $M$. We then have surjective crystal morphisms of monoids
	$
	\Gamma^* \stackrel{\pi}{\longrightarrow} M \stackrel{q}{\longrightarrow} \mac{C}(\Gamma).
	$
\end{prop}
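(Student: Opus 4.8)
The plan is to take the left-hand morphism $\pi$ as already given and to construct the right-hand morphism $q$ from the inclusion of congruences $\sim \subseteq \sim_\Gamma$ established in the discussion above. The preceding paragraphs show that $\pi : \Gamma^* \to M$ is a crystal morphism, and it is surjective because $M$ is by definition the quotient $\Gamma^*/\sim$. Applying the same argument to the full congruence $\sim_\Gamma$, the projection $\pi_\Gamma : \Gamma^* \to \mac{C}(\Gamma)$ is likewise a surjective crystal morphism, and $\mac{C}(\Gamma)$ carries a crystal structure whose structure maps are read off from any representative in $\Gamma^*$. Thus both targets inherit their crystal structures from the common one on $\Gamma^*$, which is exactly the compatibility that will make $q$ a crystal morphism.

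First I would construct $q$ and check that it is a morphism of monoids. Since $\sim$ is a subcongruence of $\sim_\Gamma$, the assignment $q([w]_\sim) := [w]_{\sim_\Gamma}$ is well defined: if $[w_1]_\sim = [w_2]_\sim$ then $w_1 \sim w_2$, hence $w_1 \sim_\Gamma w_2$ and $[w_1]_{\sim_\Gamma} = [w_2]_{\sim_\Gamma}$. By the universal property of the quotient monoid, $q$ is a homomorphism; it is surjective because every class of $\mac{C}(\Gamma)$ lifts to a word of $\Gamma^*$; and by construction $q \circ \pi = \pi_\Gamma$.

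Next I would verify that $q$ is a crystal morphism, that is, that it commutes with the weight map and the Kashiwara operators $e_i, f_i$ (the maps $\veps, \vphi$ being determined by these in the normal setting). For a representative $w$ of $[w]_\sim$ one has $\text{wt}(q([w]_\sim)) = \text{wt}([w]_{\sim_\Gamma}) = \text{wt}(w) = \text{wt}([w]_\sim)$, since both crystal structures are computed on representatives. For the Kashiwara operators, when $e_i.w \neq 0$ the chain
$$
q(e_i.[w]_\sim) = q([e_i.w]_\sim) = [e_i.w]_{\sim_\Gamma} = e_i.[w]_{\sim_\Gamma} = e_i.q([w]_\sim)
$$
holds, while if $e_i.w = 0$ both outer terms vanish; the case of $f_i$ is identical. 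This yields the full chain of surjective crystal morphisms asserted in the proposition.

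The argument is essentially formal once the descent of the crystal structure to a quotient by a crystal congruence is in hand. The only point requiring attention is the bookkeeping of the $0$ element in the Kashiwara operators and the confirmation that the representative-wise structure maps on $M$ and on $\mac{C}(\Gamma)$ agree under $q$; because both are inherited from the common structure on $\Gamma^*$ and $q$ intertwines the two projections $\pi$ and $\pi_\Gamma$, no genuine obstacle is expected.
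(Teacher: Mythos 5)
Your proposal is correct and follows the same route the paper intends: the paper states this proposition as an immediate consequence of the preceding discussion, namely that the crystal structure descends along $\pi$ to any quotient by a crystal congruence and that $\sim$ is a subcongruence of $\sim_\Gamma$, which is exactly the factorization you make explicit with $q\circ\pi=\pi_\Gamma$. Your verification that $q$ intertwines the representative-wise structure maps fills in the details the paper leaves implicit, with no divergence in method.
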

Before we state the next result, we recall a direct consequence of the definition of the tensor products of crystals.

\begin{lemm}\label{lem:aux}
	Let $\Gamma_1, \Gamma_2, \Gamma_1',\Gamma_2'$ be crystals, and $x_1 \in \Gamma_1$, $x_2\in\Gamma_2$, $x_1'\in \Gamma_1'$, and $x_2'\in \Gamma_2'$. Suppose that $\bb_{\Gamma_1}(x_1)\cong \bb_{\Gamma_1'}(x_1')$ and $\bb_{\Gamma_2}(x_2)\cong \bb_{\Gamma_2'}(x_2')$. Then
	$$
	\bb_{\Gamma_1\otimes \Gamma_2}(x_1\otimes x_2) \cong \bb_{\Gamma_1'\otimes \Gamma_2'}(x_1'\otimes x_2').
	$$
\end{lemm}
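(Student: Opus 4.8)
The plan is to realise the desired isomorphism as the restriction of the tensor product $\psi_1 \otimes \psi_2$ of the two isomorphisms supplied on the factors. First I would fix crystal isomorphisms
$$
\psi_1 : \bb_{\Gamma_1}(x_1) \stackrel{\sim}{\longrightarrow} \bb_{\Gamma_1'}(x_1'), \qquad \psi_2 : \bb_{\Gamma_2}(x_2) \stackrel{\sim}{\longrightarrow} \bb_{\Gamma_2'}(x_2')
$$
carrying the distinguished vertices, that is $\psi_1(x_1) = x_1'$ and $\psi_2(x_2) = x_2'$. This pointed compatibility is exactly what the notation $\bb(\cdot)\cong\bb(\cdot)$ records for a crystal congruence, and by the Schur-type rigidity of $\mathfrak{g}\textsc{-cryst}$ recalled above, each $\psi_j$ is in fact the unique isomorphism between the two components.

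Next I would check that $\psi_1 \otimes \psi_2$ is an isomorphism of crystals from $\bb_{\Gamma_1}(x_1) \otimes \bb_{\Gamma_2}(x_2)$ to $\bb_{\Gamma_1'}(x_1') \otimes \bb_{\Gamma_2'}(x_2')$. This is immediate from the definition of the tensor product: the weight of $u\otimes v$ is $\text{wt}(u)+\text{wt}(v)$, and the Kashiwara operators in \eqref{eq:def of e} are computed purely from the values of $\varepsilon_i,\varphi_i$ on the two tensor factors. Since $\psi_1$ and $\psi_2$ commute with $\text{wt},\varepsilon_i,\varphi_i,e_i,f_i$, so does $\psi_1\otimes\psi_2$, and its inverse is plainly $\psi_1^{-1}\otimes\psi_2^{-1}$.

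The key structural observation is the locality of the tensor Kashiwara operators: applying any $e_i$ or $f_i$ to a vertex $u\otimes v$ alters at most one tensor factor, and only by an $e_i$ or $f_i$. Hence the connected component of $x_1\otimes x_2$ computed in $\Gamma_1\otimes\Gamma_2$ never leaves $\bb_{\Gamma_1}(x_1)\otimes\bb_{\Gamma_2}(x_2)$, so it coincides with the connected component of $x_1\otimes x_2$ inside the subcrystal $\bb_{\Gamma_1}(x_1)\otimes\bb_{\Gamma_2}(x_2)$; the same holds on the primed side. Since a crystal isomorphism maps connected components isomorphically onto connected components, and $\psi_1\otimes\psi_2$ sends $x_1\otimes x_2$ to $x_1'\otimes x_2'$, it restricts to an isomorphism $\bb_{\Gamma_1\otimes\Gamma_2}(x_1\otimes x_2)\stackrel{\sim}{\longrightarrow}\bb_{\Gamma_1'\otimes\Gamma_2'}(x_1'\otimes x_2')$, as required.

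The genuinely delicate point is the first one: insisting that the $\psi_j$ respect the marked vertices. Without this the statement is false, since two vertices lying in isomorphic components but in different positions can be tensored with a common vertex to produce non-isomorphic components. Everything after that is a routine unwinding of the tensor-product definition together with the monoidality of $\mathfrak{g}\textsc{-cryst}$, so I expect no real obstacle beyond keeping the bookkeeping of marked points straight.
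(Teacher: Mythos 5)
Your argument is sound, and it is essentially the only reasonable proof of this lemma: the paper itself offers none, introducing the statement as ``a direct consequence of the definition of the tensor products of crystals,'' so there is no competing argument to compare against. Your three steps --- tensor the two component isomorphisms, observe from \eqref{eq:def of e} that the tensor Kashiwara operators never move a vertex out of $\bb_{\Gamma_1}(x_1)\otimes\bb_{\Gamma_2}(x_2)$, and restrict to the connected component of the marked vertex --- are exactly the verification the paper leaves implicit.

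The one point that needs care is the status of your requirement $\psi_j(x_j)=x_j'$. You are right that it is indispensable: take $\Gamma_1=\Gamma_1'=\Gamma_2=\Gamma_2'$ to be the standard $\mathfrak{sl}_2$-crystal $1\xrightarrow{1}2$, with $x_1=1$, $x_1'=2$ and $x_2=x_2'=2$; all four components coincide, yet $\bb(1\otimes 2)\cong B(0)$ while $\bb(2\otimes 2)\cong B(2\omega_1)$, so the lemma as literally written is false. However, your claim that pointedness ``is exactly what the notation $\bb(\cdot)\cong\bb(\cdot)$ records'' is not supported by the paper's definitions: the Schur-type rigidity of $\mathfrak{g}\textsc{-cryst}$ gives a \emph{unique} isomorphism between two isomorphic components, but nothing forces that unique isomorphism to match the marked vertices (in the counterexample it is the identity and does not). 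So what you have actually proved is a corrected, pointed form of the lemma, with $\psi_j(x_j)=x_j'$ as an added hypothesis rather than a consequence of the stated one. This is harmless for the paper's purposes --- in every invocation, such as the chain of isomorphisms in the proof of Theorem \ref{thm:char of crystal monoids}, the component isomorphisms are induced by maps (the identity, or the projection $\pi$) that do carry the marked vertices to each other --- but the extra hypothesis should be stated explicitly, not read into the notation.
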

In other words, the connected components in the tensor product of two crystals are entirely determined by the connected components in the two crystals. The following result characterizes crystal monoids as those monoids with an underlying crystal structure whose product models the tensoring of crystals.

\begin{thm}\label{thm:char of crystal monoids}
	Let $M$ be a monoid with an underlying crystal structure. Then $M$ is a crystal monoid if and only if for all $m_1,m_2\in M$ we have
	$
	\bb_M(m_1m_2)\cong \bb_{M\otimes M}(m_1\otimes m_2).
	$
\end{thm}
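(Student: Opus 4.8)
The plan is to prove both implications by reducing everything to the behaviour of connected components under the two natural multiplications, exploiting the rigidity of $\cryst$ coming from the analogue of Schur's Lemma together with Lemma~\ref{lem:aux}.

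For the forward implication, suppose $M=\Gamma^*/\!\sim$ is a crystal monoid and fix $m_1,m_2\in M$. I would first record the general fact that a surjective crystal morphism restricts to an isomorphism on each connected component: if $\pi\colon\Gamma^*\to M$ is the (crystal) projection, then $\pi$ sends the component $\bb_{\Gamma^*}(w)$ onto a single component of $M$, and being a nonzero morphism between objects isomorphic to highest weight crystals $B(\lambda)$ and $B(\mu)$, it must have $\lambda=\mu$ and be an isomorphism by the analogue of Schur's Lemma. Hence $\bb_{\Gamma^*}(w)\cong\bb_M(\pi(w))$ for every $w$. Now pick representatives $w_1,w_2$ of $m_1,m_2$; then $w_1w_2$ represents $m_1m_2$, and under the identification $\Gamma^*=\bigsqcup_n\Gamma^{\otimes n}$ the concatenation $w_1w_2$ is exactly the tensor $w_1\otimes w_2$. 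Combining $\bb_M(m_1m_2)\cong\bb_{\Gamma^*}(w_1w_2)$ with Lemma~\ref{lem:aux}, applied to the component isomorphisms $\bb_{\Gamma^*}(w_j)\cong\bb_M(m_j)$, yields $\bb_M(m_1m_2)\cong\bb_{\Gamma^*}(w_1\otimes w_2)\cong\bb_{M\otimes M}(m_1\otimes m_2)$, as required.

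For the converse, I would take $\Gamma:=M$ with its given crystal structure, form the free crystal monoid $M^*$, let $\mathrm{mult}\colon M^*\to M$ be the multiplication morphism, and set $w\sim w'$ precisely when $\mathrm{mult}(w)=\mathrm{mult}(w')$, so that $M^*/\!\sim$ is isomorphic to $M$ as monoids. The core computation is to show by induction on word length that $\bb_{M^*}(w)\cong\bb_M(\mathrm{mult}(w))$. The length one case is immediate since $M^{\otimes 1}=M$; for the inductive step, write $w=w'm$ as $w'\otimes m$ in $M^{\otimes(\ell-1)}\otimes M$, feed the inductive isomorphism $\bb_{M^{\otimes(\ell-1)}}(w')\cong\bb_M(\mathrm{mult}(w'))$ and the trivial $\bb_M(m)\cong\bb_M(m)$ into Lemma~\ref{lem:aux}, and finally apply the hypothesis to the pair $(\mathrm{mult}(w'),m)$ to get $\bb_{M\otimes M}(\mathrm{mult}(w')\otimes m)\cong\bb_M(\mathrm{mult}(w')\,m)=\bb_M(\mathrm{mult}(w))$. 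Consequently $\mathrm{mult}(w)=\mathrm{mult}(w')$ forces $\bb_{M^*}(w)\cong\bb_{M^*}(w')$, so $\sim$ is a crystal congruence and $M^*/\!\sim$ is a crystal monoid in the sense of Definition~\ref{def:crystal congruences}.

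It then remains to check that the crystal structure induced on $M^*/\!\sim$ agrees with the one we started with on $M$, and this is the step I expect to require the most care, since the hypothesis supplies only abstract isomorphisms of components rather than a canonical multiplication morphism. I would avoid proving directly that $\mathrm{mult}$ is a crystal morphism, arguing instead as follows: the length one embedding $M=M^{\otimes 1}\hookrightarrow M^*$ is a crystal morphism because the Kashiwara operators preserve tensor degree, and composing it with the crystal projection $M^*\to M^*/\!\sim$ gives a crystal morphism $\iota\colon M\to M^*/\!\sim$. Since every class has the length one representative $\mathrm{mult}(w)$, the map $\iota$ is a bijection whose inverse is the monoid isomorphism $[w]\mapsto\mathrm{mult}(w)$; and since a bijective strict morphism of crystals automatically has a strict inverse, $\iota$ is a crystal isomorphism. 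This exhibits $M$, with its given structure, as the crystal monoid $M^*/\!\sim$, completing the converse. The main obstacle throughout is precisely the passage from the set-theoretic monoid identifications to genuine isomorphisms of crystals, which is exactly where Schur's Lemma and Lemma~\ref{lem:aux} do the essential work.
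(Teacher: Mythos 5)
Your proposal is correct and follows essentially the same route as the paper: the forward direction combines the decomposition $\Gamma^*=\bigsqcup_n\Gamma^{\otimes n}$ with Lemma~\ref{lem:aux}, and the converse proves $\bb_{M^{\otimes k}}(m_1\otimes\cdots\otimes m_k)\cong\bb_M(m_1\cdots m_k)$ by induction and uses it to show that the kernel congruence of the multiplication map $M^*\to M$ is a crystal congruence. Your two extra care points --- justifying $\bb_{\Gamma^*}(w)\cong\bb_M(\pi(w))$ via the Schur's Lemma analogue, and checking that the induced crystal structure on $M^*/\!\sim$ agrees with the given one --- are welcome refinements of steps the paper leaves implicit, not a different argument.
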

\begin{proof}
	Let $M$ be a crystal monoid, say $M= (\Gamma^*/\sim)$ for some crystal $\Gamma$ and a crystal congruence $\sim$ on $\Gamma^*$. Let $w_1,w_2\in\Gamma^*$ be representatives of $m_1$ and $m_2$ respectively. Since $w_1w_2$ is a representative of $m_1m_2$, and $\Gamma^* = \bigsqcup_{n=0}^\infty \Gamma^{\otimes n}$, we obtain
	\begin{equation}\label{eq:11}
	\bb_M(m_1m_2)\cong \bb_{\Gamma^*}(w_1w_2) \cong \bb_{\Gamma^*}(w_1\otimes w_2).
	\end{equation}
	Let $A_1=\bb_{\Gamma^*}(w_1)$ and $A_2=\bb_{\Gamma^*}(w_2)$ be the connected components of $w_1$ and $w_2$ in $\Gamma^*$. By Lemma \ref{lem:aux} we obtain
	\begin{equation}\label{eq:12}
	\bb_{\Gamma^*}(w_1\otimes w_2) \cong \bb_{A_1\otimes A_2}(w_1\otimes w_2).
	\end{equation}
	Further, by definition of crystal monoids we have $A_1'=\bb_{M}(m_1)\cong A_1$ and $A_2'=\bb_M(m_2)$, which gives us
	\begin{equation}\label{eq:13}
	\bb_{A_1\otimes A_2} (w_1\otimes w_2) \cong \bb_{A_1'\otimes A_2'}(m_1\otimes m_2).
	\end{equation}
	Finally, since the connected component $\bb_{M\otimes M}(m_1\otimes m_2)$ is entirely determined by $A_1'$ and $A_2'$, by combining \eqref{eq:11},\eqref{eq:12}, and \eqref{eq:13}, we obtain
	\begin{equation}\label{eq:ccc}
	\bb_M(m_1m_2)\cong \bb_{M\otimes M}(m_1\otimes m_2)
	\end{equation}
	which is what we wanted to show.
	
	Conversely, let $M$ be a monoid with an underlying crystal structure such that \eqref{eq:ccc} is satisfied for all $m_1,m_2\in M$. We claim that for any $m_1,\dots,m_k\in M$ we have
	\begin{equation}\label{eq:14}
	\bb_M(m_1\cdots m_k) \cong \bb_{M^{\otimes k}}(m_1\otimes m_2\otimes \cdots \otimes m_k).
	\end{equation}
	For $k\leq 2$ this is the hypothesis of the theorem. We prove it for $k>2$ by induction. Indeed, suppose that \eqref{eq:14} holds for $k$. Consider $m_1,\dots, m_k, m_{k+1}\in M$, and set $m'=m_2\cdots m_{k+1}$, $A_1=\bb_M(m_1)$, and $A'=\bb_{M}(m')$. Then by \eqref{eq:ccc} and Lemma \ref{lem:aux} we have
	$$
	\bb_M(m_1\cdots m_{k+1}) \cong \bb_M(m_1 m') \cong \bb_{M\otimes M}(m_1\otimes m') \cong \bb_{A_1\otimes A'}(m_1\otimes m').
	$$
	By inductive hypothesis, we have $A'\cong \bb_{M^{\otimes k}}(m_2\otimes \cdots m_{k+1})$, and again by Lemma \ref{lem:aux} we obtain
	$$
	\bb_{A_1\otimes A'}(m_1\otimes m') \cong \bb_{A_1 \otimes M^{\otimes k}}(m_1\otimes(m_2\otimes\cdots m_{k+1})) \cong \bb_{M^{\otimes k}}(m_1\otimes \cdots \otimes m_{k+1})
	$$
	which proves \eqref{eq:14}. Next we prove that $M$ is indeed a crystal monoid. Since $M$ is a crystal, we can construct the free crystal monoid $M^*$, along with the canonical projection $r:M^*\longrightarrow M$ given by $r(m_1\otimes \cdots m_k) = m_1\cdots m_k\in M$. From \eqref{eq:14} we have that $r$ is a crystal morphism. The trivial presentation of $M$ is given by $M^*/\sim$ where $\sim$ is given by
	$
	w_1\sim w_2 \quad \text{if}\quad r(w_1)=r(w_2).
	$
	From \eqref{eq:14} we see that $\sim$ is a crystal congruence, hence $M$ is a crystal monoid, which is what we wanted to show.
\end{proof}

Next we introduce a notion of 2-polygraph with all of it defining data coming from the category of crystals.

\begin{defin}\label{def:crystal 2 poly}
	A crystal 2-polygraph is a $2$-polygraph $X=(X_1,X_2)$ such that
	\begin{itemize}
		\item[{\bf i)}]$X_1$ and $X_2$ are crystals,
		\item[{\bf ii)}] the source and target maps $s,t:X_2\longrightarrow X_1^*$ are morphisms of crystals.
	\end{itemize}
\end{defin}
This definition is the adaptation of the notion of 2-polygraphs to the category of crystals. We interpret the second condition graphically as follows:
\begin{equation}\label{eq:katror1}
{\xymatrix{ w \ar@{=>}^{\alpha}[r] \ar@{->}_i[d] & w_1 \ar@{.>}^i[d] \\ f_i.w \ar@{:>}_{f_i.\alpha}[r] & f_i.w_1}}
\end{equation}
That is for a rewriting rule $w \stackrel{\alpha}{\Longrightarrow} w_1$, and $i\in I$ such that $f_i.w$ is defined, then so is $f_i.w_1$ and we have a rewriting rule $f_i.\alpha$.

Next we show that crystal 2-polygraphs generate crystal congruences.

\begin{prop}
	Let $X$ be a crystal 2-polygraph. Then the congruence $\sim_{X_2}$ generated by $X_2$ is a crystal congruence, and hence $X$ presents a crystal monoid.
\end{prop}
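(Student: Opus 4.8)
The plan is to reduce the statement to a single application of a generating relation, and there to transport connected components through the source and target maps. First I would recall that $\sim_{X_2}$ is the smallest equivalence relation on $X_1^*$ that is compatible with concatenation and contains the pairs $\big(s(\alpha),t(\alpha)\big)$ for $\alpha\in X_2$. Hence any pair $w_1\sim_{X_2}w_2$ is linked by a finite zig-zag $w_1=z_0,z_1,\dots,z_k=w_2$ in which consecutive words differ by a single rewriting step, i.e. $\{z_j,z_{j+1}\}=\{u\,s(\alpha)\,v,\ u\,t(\alpha)\,v\}$ for some $u,v\in X_1^*$ and $\alpha\in X_2$. Since isomorphism of connected components is an equivalence relation, it suffices to prove $\bb_{X_1^*}(u\,s(\alpha)\,v)\cong\bb_{X_1^*}(u\,t(\alpha)\,v)$ for each such elementary step and then conclude by transitivity.

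The core observation is that, because $s$ and $t$ are crystal morphisms, they transport the connected component of $\alpha$ faithfully. Concretely, I would first establish the auxiliary fact that a strict morphism $F$ of crystals satisfies $\bb(F(x))\cong\bb(x)$ whenever $F(x)\neq 0$: the component $\bb(x)$ is isomorphic to some $B(\lambda)$, its highest weight vector is sent by $F$ (which commutes with the $e_i$ and preserves weights) to a highest weight vector of the same weight $\lambda$, and by the Schur-type lemma in $\cryst$ the restriction $F|_{\bb(x)}$ is then an isomorphism onto $\bb(F(x))\cong B(\lambda)$. Applying this to $s$ and to $t$ — noting that $s(\alpha)$ and $t(\alpha)$ are genuine words in $X_1^*$ and hence nonzero — gives
\[
\bb_{X_1^*}(s(\alpha))\cong\bb_{X_2}(\alpha)\cong\bb_{X_1^*}(t(\alpha)).
\]

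It then remains to promote this equivalence from $s(\alpha),t(\alpha)$ to the words $u\,s(\alpha)\,v,\ u\,t(\alpha)\,v$ obtained by placing them in an arbitrary context. Here I would invoke Lemma \ref{lem:aux}, using the identification $X_1^*=\bigsqcup_{n\geq 0}X_1^{\otimes n}$ under which concatenation is the tensor product, so that the word $u\,s(\alpha)\,v$ coincides with the tensor $u\otimes s(\alpha)\otimes v$. Applying Lemma \ref{lem:aux} with the trivial isomorphisms $\bb(u)\cong\bb(u)$ and $\bb(v)\cong\bb(v)$ together with $\bb(s(\alpha))\cong\bb(t(\alpha))$, iterated over the two tensor slots (equivalently, observing that $\bb_{X_1^*}(u\,c\,v)$ depends only on the components $\bb(u),\bb(c),\bb(v)$), yields $\bb_{X_1^*}(u\,s(\alpha)\,v)\cong\bb_{X_1^*}(u\,t(\alpha)\,v)$, which completes the elementary step. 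Chaining these isomorphisms along the zig-zag finishes the proof that $\sim_{X_2}$ is a crystal congruence, and the concluding clause that $X$ presents a crystal monoid is then immediate from Definition \ref{def:crystal congruences}.

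I expect the only delicate point to be the auxiliary fact that crystal morphisms restrict to isomorphisms on connected components; everything else is bookkeeping with Lemma \ref{lem:aux} and the equivalence-relation structure of $\sim_{X_2}$. In particular one must ensure the Schur-type lemma is applied to whole connected components $\cong B(\lambda)$ and that no component is collapsed to $0$, which is guaranteed since the source and target of a generating $2$-cell are honest words in $X_1^*$.
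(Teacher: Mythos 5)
Your proposal is correct and follows essentially the same route as the paper: reduce $\sim_{X_2}$ to elementary rewriting steps via zig-zags, deduce $\bb_{X_1^*}(s(\alpha))\cong\bb_{X_1^*}(t(\alpha))$ from the requirement that $s,t$ be crystal morphisms, and propagate through contexts $u\,(-)\,v$ using the multiplicativity of component-isomorphism (the paper cites that the full congruence $\sim_{X_1}$ is a congruence, which is exactly the content of Lemma~\ref{lem:aux} that you invoke directly). The only difference is that you spell out, via the Schur-type lemma in $\cryst$, why a strict crystal morphism restricts to an isomorphism on connected components — a point the paper asserts without elaboration — and that detail is correctly argued.
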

\begin{proof}
	Recall the abstract rewriting system $\mac{R}(X)$ associated to $X$, as \ref{subsec:termination nad normal forms}. In subsection \ref{subsec:free2monoids} we remark that two words $u,v\in X_1^*$ are $\sim_{X_2}$-congruent if there exists a zig-zag rewriting sequence of the form
	$$
	u=u_1 \stackrel{\gamma_1}{\Longrightarrow}u_2\stackrel{\gamma_2}{\Longleftarrow} u_3 \stackrel{\gamma_3}{\Longrightarrow}\cdots \stackrel{\gamma_k}{\Longleftarrow} u_{k+1}=v,
	$$
	where $\gamma_i$ are rewriting sequences. Thus to prove that $\sim_{X_2}$ is a crystal congruence, it suffices to show that for a rewriting step $tuv \stackrel{t\alpha v}{\Longrightarrow} tu'v$ in $\mac{R}(X)$ we have $\bb_{X_1^*}(tuv) \cong \bb_{X_1^*}(tu'v)$. Since $u \Longrightarrow u'$ implies $\bb_{X_1^*}(u)\cong \bb_{X_1^*}(u')$ by Definition \ref{def:crystal 2 poly} \textbf{ii)}, for the full crystal congruence $\sim_{X_1}$ on $X_1^*$ we have that $u\sim_{X_1} u'$. Since $\sim_{X_1}$ is a crystal congruence, we have that $\bb_{X_1^*}(tuv) \cong \bb_{X_1^*}(tu'v)$. This shows that $\sim_{X_2}$ is indeed a crystal congruence, and the monoid $\ov{X}$ presented by $X$ is a crystal monoid.
\end{proof}
Next we show that the column presentations of the plactic monoids of classical types are crystal 2-polygraphs.

\begin{thm}\label{thm:col pres are good}
	Let $\Gamma=A_n,B_n,C_n,D_n,G_2$. Then the column presentation $\col(\Gamma)$ is a finite convergent reduced crystal 2-polygraph.
\end{thm}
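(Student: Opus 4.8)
The presentation $\col(\Gamma)$ is already known to be finite, convergent and reduced by Theorem \ref{thm:col pres is convergent}, so the whole task is to verify that it meets Definition \ref{def:crystal 2 poly}. Condition \textbf{i)} for $X_1=\col(\Gamma)_1$ is precisely property \textbf{i)} of admissible columns recalled in \ref{subsec:col pres}. The remaining work is to put a crystal structure on $X_2=\col(\Gamma)_2$ and to check that the source and target maps are crystal morphisms. Since a generating $2$-cell $c_1c_2\Longrightarrow P_c(c_1c_2)$ is completely determined by its source, the source map $s$ is injective into the degree-$2$ component $\col(\Gamma)_1\otimes\col(\Gamma)_1$ of $\col(\Gamma)_1^*$; the plan is thus to show that its image is a subcrystal, to define the crystal structure on $X_2$ by transport along $s$, and then to read off that $t$ is a crystal morphism.

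The elements of $\col(\Gamma)_2$ are exactly the pairs $(c_1,c_2)$ that are \emph{not} already in tableau normal form, i.e. the complement in $\col(\Gamma)_1\otimes\col(\Gamma)_1$ of the set $T$ of pairs satisfying $c_1c_2=l_c(P_c(c_1c_2))$. The crux is to recognize $T$ as a union of connected components. To this end I would use that insertion $P_c$ and reading $l_c$ are morphisms of crystals (Proposition \ref{prop:P_c crystal morphism}): the reading map refines to a crystal morphism $l_c\colon\text{Tab}(\Gamma)\longrightarrow\col(\Gamma)_1^*$, because the Kashiwara operators preserve the shape of a tableau and hence the lengths of its columns, so that flattening is injective on each fixed column-length component and the $\col(\Gamma)_1^*$-valued reading inherits the morphism property from the $\Gamma^*$-valued one. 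Consequently $T=\operatorname{im}(l_c)\cap(\col(\Gamma)_1\otimes\col(\Gamma)_1)$ is the intersection of two subcrystals of $\col(\Gamma)_1^*$ — the image of a crystal morphism and a degree component — and is therefore a union of connected components. Its complement $S=s(\col(\Gamma)_2)$ is then a subcrystal as well.

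Transporting the crystal structure of $S$ along the bijection $s$ turns $\col(\Gamma)_2$ into a crystal and makes $s$ a crystal morphism by construction. Under the same identification the target map is the restriction to $S$ of the composite $l_c\circ P_c$, precomposed with the concatenation map $\col(\Gamma)_1\otimes\col(\Gamma)_1\to\Gamma^*$; its image lands in $\col(\Gamma)_1^*$, with at most two, respectively three for $G_2$, columns by the $2$-column Lemma \ref{prop:2 col lemmata}. Since $l_c\circ P_c$ is a crystal morphism and $\col(\Gamma)_1^*$ is a subcrystal of $\Gamma^*$, the map $t$ is a crystal morphism too. This establishes Definition \ref{def:crystal 2 poly}, and combined with Theorem \ref{thm:col pres is convergent} it proves that $\col(\Gamma)$ is a finite convergent reduced crystal $2$-polygraph.

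The main obstacle is the single geometric input of the second paragraph: that \emph{reducedness} — being already a two-column tableau — is invariant under the Kashiwara operators, equivalently that the reading map is a crystal morphism with values in $\col(\Gamma)_1^*$ rather than merely in $\Gamma^*$. The subtlety is genuine, since the concatenation $\col(\Gamma)_1^*\to\Gamma^*$ is not injective and can identify a non-reduced pair with its single-column normal form; it is resolved precisely by the shape-preservation of the crystal operators on tableaux, which keeps the comparison inside a fixed column-length component where flattening is injective. Everything else is formal manipulation of subcrystals and of the morphisms provided by Proposition \ref{prop:P_c crystal morphism}.
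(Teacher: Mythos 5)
Your proof is correct and follows essentially the same route as the paper: both arguments import finiteness, convergence and reducedness from Theorem \ref{thm:col pres is convergent}, and both use Proposition \ref{prop:P_c crystal morphism} to show that the set of sources $c_1c_2$ with $c_1c_2\neq P_c(c_1c_2)$ is a union of connected components, so that $\col(\Gamma)_2$ inherits the crystal structure and the source and target maps become crystal morphisms. The one place you go beyond the paper's write-up is in explicitly handling the non-injectivity of the concatenation $\col(\Gamma)_1^*\to\Gamma^*$ via shape-preservation of the Kashiwara operators on tableaux; the paper phrases its equivalence $c_1c_2=P_c(c_1c_2)\Leftrightarrow e_i.(c_1c_2)=P_c(e_i.(c_1c_2))$ directly in $\Gamma^*$ and leaves that point implicit, so your added care is a refinement rather than a different method.
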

\begin{proof}
	In \ref{subsec:col pres} we have remarked that the set of generators $\col(\Gamma)_1$ form a crystal. The set of generating relations $\col(\Gamma)_2$ consists of the rewriting rules $c_1c_2 \Longrightarrow P_c(c_1c_2)$ for $c_1,c_2\in\col(\Gamma)_1$ such that $c_1c_2\neq P_c(c_1c_2)$. Note that if $e_i.(c_1c_2)\neq 0$, since $P_c$ is a crystal morphism by Proposition \ref{prop:P_c crystal morphism}, we have that
	$$
	c_1c_2 = P_c(c_1c_2) \quad \text{if and only if} \quad e_i.(c_1c_2) = P_c(e_i.(c_1c_2)).
	$$
	This and the analogous statement for $f_i$ allows shows that $\col(\Gamma)_2$ naturally inherits the crystal structure from $\Gamma^*$ via
	$$
	\col(\Gamma)_2 = \bigcup_{c_1c_2\neq P_c(c_1c_2)} \bb_{\Gamma^*}(c_1c_2).
	$$
	Moreover, since $\bb_{\Gamma^*}(c_1c_2) \cong \bb_{\Gamma^*}(P_c(c_1c_2))$ we have that $\col(\Gamma)$ is indeed a crystal 2-polygraph. The other properties of $\col(\Gamma)$ are a consequence of Theorem \ref{thm:col pres is convergent}.
\end{proof}
\subsection{Rewriting with crystal polygraphs}
Let $X$ be a crystal 2-polygraph and recall the associated sets $\text{Step}(X)$, $\text{Seq}(X)$, $\bra(X)$, $\lbr(X)$, and $\text{Crit}(X)$. Here we show that all these sets inherit the crystal structure from $X_1^*$ via the source maps by explicitly stating the values of the structure maps $\text{wt},\ e_i,\ f_i$. Since the definitions for $e_i$ and $f_i$ are very similar, in the rest of this article we shall use the symbol $k$ to denote an element of the set $\{e_i, f_i\ |\ i\in I\}$.

\subsubsection{Crystal structure on rewriting sequences}
Consider $\text{Step}(X)$, and an element $\gamma=t\alpha v:tuv\Longrightarrow tu'v\in \text{Step}(X)$ for $t,v\in X_1^*$ and $\alpha\in X_2$. Since the congruence $\sim_{X_2}$ is a crystal congruence, we have
$
\bb_{X_1^*}(tuv)\cong \bb_{X_1^*}(tu'v).
$
thus we equip $\text{Step}(X)$ with a crystal structure by defining the structure maps as follows: the weight map is given by $\text{wt}(\gamma) = \text{wt}(tuv)$, and the Kashiwara operators are given by
$$
k.\gamma = \left\{\begin{array}{ll} (k.t)\alpha v,& \quad\text{ if } k.(tuv)=(k.t)uv \\ t(k.\alpha)v,& \quad\text{ if } k.(tuv)=t(e_i.u)v \\ t\alpha(k.v),& \quad\text{ if } k.(tuv)=tu(k.v) \\ 0, & \quad\text{ if } k.(tuv)= 0   \end{array}\right.
%\qquad f_i.\gamma = \left\{\begin{array}{ll} (f_i.t)\alpha v,& \quad\text{ if } f_i.(tuv)=(f_i.t)uv \\ t(f_i.\alpha)v,& \quad\text{ if } f_i.(tuv)=t(f_i.u)v \\ t\alpha(f_i.v),& \quad\text{ if } f_i.(tuv)=tu(f_i.v) \\ 0, &\quad \text{ if } f_i.(tuv) = 0  \end{array}\right.
$$

Consider $\text{Seq}(X)$ and a rewriting sequence
$$
\mathfrak{s}: w_0 \stackrel{\alpha_1}{\Longrightarrow} w_1\stackrel{\alpha_2}{\Longrightarrow} \cdots \stackrel{\alpha_n}{\Longrightarrow} w_{n}\stackrel{\alpha_{n+1}}{\Longrightarrow}\cdots.
$$
Since $\alpha_i$ are rewriting steps, from the crystal structure of $\text{Step}(X)$ we have that
$
\bb_{X_1^*}(w_0) \cong \bb_{X_1^*}(w_k)
$
for all the words $w_k$ appearing in $\mathfrak{s}$. Thus we equip $\text{Seq}(X)$ with a crystal structure by defining the structure maps as follows: the weight map is given by $\text{wt}(\mathfrak{s})=\text{wt}(w_0)$ and the Kashiwara operator $k$ is defined by setting $k.\mathfrak{s} = 0$ if $k.w_0=0$, and otherwise if $k.w_0\neq 0$, we set
$$
\begin{array}{c}
k.\mathfrak{s}: k.w_0 \stackrel{k.\alpha_1}{\Longrightarrow} k.w_1\stackrel{k.\alpha_2}{\Longrightarrow} \cdots \stackrel{k.\alpha_n}{\Longrightarrow} k.w_{n}\stackrel{k.\alpha_{n+1}}{\Longrightarrow}\cdots
%\\ \ \\
%f_i.\mathfrak{s}: f_i.w_0 \stackrel{f_i.\alpha_1}{\Longrightarrow} f_i.w_1\stackrel{f_i.\alpha_2}{\Longrightarrow} \cdots \stackrel{f_i.\alpha_n}{\Longrightarrow} f_i.w_{n}\stackrel{f_i.\alpha_{n+1}}{\Longrightarrow}\cdots.
\end{array}
$$
The following result, which is a direct consequence of the definition of the crystal structure on $\text{Seq}(X)$ shows the interaction of the crystal structure with the property of termination.
\begin{prop}\label{prop:some properties of K2} Let $X$ be a crystal 2-polygraph, $\mathfrak{s}_1,\mathfrak{s}_2\in \text{Seq}(X)$, $k$ a Kashiwara operator such that $k.\mathfrak{s}_1\neq 0$ and $k.\mathfrak{s}_2\neq 0$, and $m,n\in\mathbb{N}$. Then
	\begin{itemize}
		\item[{\bf i)}] $k.\mathfrak{s}_1(n)=k.(\mathfrak{s}_1(n))$, where $\mathfrak{s}_1(n)$ denotes the $n$-th entry of $\mathfrak{s}_1$,
		\item[{\bf ii)}] $k.\mathfrak{s}_1$ is finite if and only if $\mathfrak{s}_1$ is finite, and in that case $\text{len}(\mathfrak{s}_1)=\text{len}(k.\mathfrak{s}_1)$,
		\item[{\bf iii)}] $\mathfrak{s}_1(m)=\mathfrak{s}_2(n)$ if and only if $(k.\mathfrak{s}_1)(m)=(k.\mathfrak{s}_2)(n)$.
	\end{itemize}
\end{prop}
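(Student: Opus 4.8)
The plan is to reduce all three assertions to a single structural fact: the Kashiwara operator $k$ acts on a rewriting sequence $\mathfrak{s}$ entrywise, transforming each word and each step while leaving the index set untouched. The point requiring justification is that this action is well defined, i.e. that $k$ does not annihilate any intermediate word. I would argue this step by step: for any rewriting step $\gamma\colon w\Longrightarrow w'$ occurring in $\mathfrak{s}$, the crystal 2-polygraph condition of Definition \ref{def:crystal 2 poly}~\textbf{ii)}, pictured in \eqref{eq:katror1}, ensures that $k.w\neq 0$ forces $k.w'\neq 0$, and that in this case $k.\gamma\colon k.w\Longrightarrow k.w'$ is again a rewriting step with $s_1(k.\gamma)=k.(s_1(\gamma))$ and $t_1(k.\gamma)=k.(t_1(\gamma))$, since the source and target maps commute with $k$. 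Hence, starting from the hypothesis $k.\mathfrak{s}_1\neq 0$, i.e. $k.w_0\neq 0$, an induction along the steps of $\mathfrak{s}_1$ shows that $k.w_j\neq 0$ for every word $w_j$ of $\mathfrak{s}_1$, so that $k.\mathfrak{s}_1$ is the genuine sequence $k.w_0\overset{k.\alpha_1}{\Longrightarrow}k.w_1\overset{k.\alpha_2}{\Longrightarrow}\cdots$ with exactly one step for each step of $\mathfrak{s}_1$.

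Granting this, statement \textbf{i)} is immediate from the defining display of $k.\mathfrak{s}_1$: its $n$-th entry is, by construction, $k.w_n=k.(\mathfrak{s}_1(n))$. Statement \textbf{ii)} follows because the assignment $\mathfrak{s}_1\mapsto k.\mathfrak{s}_1$ is a bijection between the entries of $\mathfrak{s}_1$ and those of $k.\mathfrak{s}_1$ that preserves the indexing and creates or destroys no step; consequently $k.\mathfrak{s}_1$ is finite exactly when $\mathfrak{s}_1$ is, and $\text{len}(\mathfrak{s}_1)=\text{len}(k.\mathfrak{s}_1)$ in the finite case.

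For statement \textbf{iii)} I would first use \textbf{i)} to rewrite both sides as $(k.\mathfrak{s}_1)(m)=k.(\mathfrak{s}_1(m))$ and $(k.\mathfrak{s}_2)(n)=k.(\mathfrak{s}_2(n))$. The forward implication is then immediate: if $\mathfrak{s}_1(m)=\mathfrak{s}_2(n)$ then applying $k$ to both sides gives $(k.\mathfrak{s}_1)(m)=(k.\mathfrak{s}_2)(n)$. The converse is the only place where a genuine crystal-theoretic input enters, namely the injectivity of the Kashiwara operators on their domains: by axiom \textbf{(C4)} the operators $e_i$ and $f_i$ are mutually inverse bijections between the loci on which they are nonzero, so from $k.(\mathfrak{s}_1(m))=k.(\mathfrak{s}_2(n))$ with both sides nonzero one recovers $\mathfrak{s}_1(m)=\mathfrak{s}_2(n)$.

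I expect essentially no serious obstacle here, as the content is packaged into the definition of the crystal structure on $\text{Seq}(X)$; the one point deserving care is the well-definedness in the first paragraph, ensuring via the crystal 2-polygraph axiom \eqref{eq:katror1} that $k$ stays nonzero across every step of a sequence once it is nonzero on the source, so that length and entries are transported faithfully.
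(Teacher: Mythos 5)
Your proof is correct and matches the paper's treatment: the paper states this proposition without proof, calling it a direct consequence of the definition of the crystal structure on $\text{Seq}(X)$, and your argument is precisely the unwinding of that definition (entrywise action of $k$, preservation of connected components across rewriting steps to justify well-definedness, and injectivity of the Kashiwara operators via \textbf{(C4)} for the converse in \textbf{iii)}). No gaps.
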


\subsubsection{Crystal structure on branchings}
Consider $\text{Br}(X)$ and $(\alpha,\beta)\in\bra$ where $\alpha:u\Longrightarrow v$ and $\beta:u \Longrightarrow w$. Since $\bb_{\step(X)}(\alpha)\cong\bb_{\step(\beta)}(\beta)\cong\bb_{X_1^*}(u)$, we define the crystal structure maps as follows: the weight map is defined as $text{wt}((\alpha,\beta))=\text{wt}(u)$, and the Kashiwara operator $k$ is defined by setting $k.(\alpha,\beta)=0$ if $k.u = 0$, and otherwise if $k.u\neq 0$ we set $k.(\alpha,\beta) = (k.\alpha,k.\beta)$. Graphically for $k=e_i$ we interpret this as 
$$
\xymatrix@!R@R=.5em{ & v & \\ u \ar@{=>}@/^/ [ru] \ar@{=>}@/_/ [rd]  & &\\ & w & \\ & {e_i.v} \ar@{.>}@/^5ex/@[kuq] [uuu] _i & \\ {e_i.u} \ar@{.>}@[kuq] [uuu] ^i \ar@2@/^/@[black] [ru] \ar@2@/_/@[black] [rd] & & \\ & {e_i.w} \ar@{.>}@/^5ex/@[kuq] [uuu] ^i &
}
$$
The next result shows that the particular types of branchings are preserved by the crystal structure.
\begin{prop}\label{prop:kash on branchings}
	Let $X$ be a crystal 2-polygraph, $(\alpha,\beta)\in\text{Br}(X)$, and $k$ a Kashiwara operator such that $k.(\alpha,\beta)\neq 0$. Then
	\begin{itemize}
		\item[{\bf i)}] $(\alpha,\beta)\in \text{LBr}(X)$ if and only if $k.(\alpha,\beta)\in\text{LBr}(X)$;
		\item[{\bf ii)}] $(\alpha,\beta)$ is confluent if and only if $k.(\alpha,\beta)$ is confluent,
		\item[{\bf iii)}] $(\alpha,\beta)$ is aspherical (resp. Peiffer) if and only if $k.(\alpha,\beta)$ is aspherical (resp. Peiffer). In particular $(\alpha,\beta)$ is overlapping if and only if $k.(\alpha,\beta)$ is overlapping
		\item[{\bf iv)}] $(\alpha,\beta) \in\text{Crit}(X)$ if and only if $k.(\alpha,\beta)\in\text{Crit}(X)$.
	\end{itemize}
\end{prop}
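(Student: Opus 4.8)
The plan is to exploit the fact that each Kashiwara operator $k$ acts on all the derived sets (steps, sequences, branchings) \emph{entrywise} through the source word, together with the invertibility coming from axiom \textbf{(C4)}. Concretely, writing $k^-$ for the opposite operator (so $k^- = f_i$ if $k = e_i$, and $k^- = e_i$ if $k = f_i$), axiom \textbf{(C4)} gives $k^-.(k.x) = x$ whenever $k.x \neq 0$; since the structure maps on $\text{Step}(X)$, $\text{Seq}(X)$ and $\text{Br}(X)$ are all defined through the source, the assignment $(\alpha,\beta) \mapsto k.(\alpha,\beta)$ is a bijection from $\{(\alpha,\beta) : k.(\alpha,\beta)\neq 0\}$ onto its image, with inverse given by $k^-$. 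This reduces every biconditional in the statement to a single implication: it suffices to show that each property is \emph{preserved} by $k$, and then apply that implication to $k.(\alpha,\beta)$ with the operator $k^-$ to obtain the reverse direction.

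For \textbf{i)}, I would observe that a branching is local precisely when both of its sides are rewriting steps, i.e. rewriting sequences of length one. By Proposition \ref{prop:some properties of K2} \textbf{ii)} we have $\text{len}(k.\alpha) = \text{len}(\alpha)$ and $\text{len}(k.\beta) = \text{len}(\beta)$, so being local is preserved by $k$, and the converse follows by invertibility. For \textbf{ii)}, suppose $(\alpha,\beta)$ is confluent, completed by sequences $\alpha', \beta'$ as in \eqref{eq:confdiag}. The sources of $\alpha'$ and $\beta'$ are $t(\alpha)$ and $t(\beta)$, and these words occur as entries of the sequences $\alpha, \beta$; since $k.\alpha, k.\beta \neq 0$, Proposition \ref{prop:some properties of K2} \textbf{i)} gives that all their entries are nonzero, in particular $k.t(\alpha) = t(k.\alpha) \neq 0$ and $k.t(\beta) = t(k.\beta) \neq 0$, so $k.\alpha'$ and $k.\beta'$ are defined. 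Applying $k$ and using Proposition \ref{prop:some properties of K2} \textbf{i)} and \textbf{iii)} to transport the identities $s(\alpha') = t(\alpha)$, $s(\beta') = t(\beta)$ and $t(\alpha') = t(\beta')$, I obtain that $k.(\alpha,\beta)$ is confluent via $(k.\alpha', k.\beta')$; the converse is again invertibility.

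The crux of the argument is \textbf{iii)} and \textbf{iv)}, and the key observation — which I expect to be the main point to make precise — is that $k$ acts on a word of $X_1^* = \bigsqcup_{n=0}^{\infty}\Gamma^{\otimes n}$ by altering \emph{exactly one letter}, leaving the word length and all positions fixed: the iterated tensor rule \eqref{eq:def of e} selects a single tensor factor and replaces it by its image under $k$. Reading off the cases in the definition of the crystal structure on $\text{Step}(X)$, it follows that for a rewriting step $\gamma$ the redex of $k.\gamma$ occupies exactly the same interval of positions as the redex of $\gamma$; hence, for a branching $(\alpha,\beta)$, the relative configuration of the two redex intervals is invariant under $k$. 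Since aspherical means $\alpha = \beta$ (preserved by the bijection $k$), Peiffer means the two redex intervals are disjoint, and overlapping means local but neither aspherical nor Peiffer, all three classes are preserved, which gives \textbf{iii)}.

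For \textbf{iv)}, I would use that a critical branching is a minimal overlapping branching, where minimality with respect to $\sqsubseteq$ amounts to the absence of a common nonempty context, equivalently to the two redex intervals together spanning the whole source word. As $k$ preserves positions and lengths, and sends a context decomposition $(\alpha,\beta) = (p\alpha_0 q,\, p\beta_0 q)$ to one with contexts of the same lengths, minimality is preserved; combined with the preservation of ``overlapping'' from \textbf{iii)}, this yields that $(\alpha,\beta)$ is critical if and only if $k.(\alpha,\beta)$ is. The main obstacle throughout is essentially bookkeeping: carefully matching the case distinctions in the definition of $k$ on $\text{Step}(X)$ with the position-invariance claim, which is the single fact that drives items \textbf{iii)} and \textbf{iv)}.
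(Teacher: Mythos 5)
Your proof is correct and follows essentially the same route as the paper: the biconditionals are reduced to one implication via the invertibility of the Kashiwara operators, parts \textbf{i)}--\textbf{iii)} are read off from the definitions, and the minimality argument for \textbf{iv)} rests on the fact that $k$ changes a single letter and so preserves the lengths of the contexts $p,q$ in any decomposition $(p\gamma q, p\delta q)$ --- which is precisely the content of the paper's three-case analysis of where $f_i$ acts on $puq$. Your ``position-invariance'' phrasing just packages that case analysis more compactly.
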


\begin{proof} 
	
	The statements \textbf{i)}, \textbf{ii)}, and \textbf{iii)} follow directly from 
	the definitions of different families of branchings. Here we prove \textbf{iv)}.
	
	Recall from \eqref{eq:types of crits} that there are two types of critical branchings: inclusion, and overlapping. Here we shall prove part \textbf{iv)} for the overlapping branchings, as the proof for the inclusion branchings is entirely similar. Consider the case of $k=e_i$. Let $(\alpha,\beta)$ be an overlapping critical branching of $X$ so that $e_i.(\alpha,\beta)\neq 0$. By {\bf i)} we see that $e_i.(\alpha,\beta)$ is a 
	local branching, and by {\bf iii)} it is overlapping. We 
	need to show that $(e_i.\alpha,e_i.\beta)$ is minimal with respect to the order $\sqsubset$ as defined in \ref{subsub:critical branchings}. Suppose 
	the contrary, that $(e_i.f,e_i.g)$ is not minimal and thus there exists a branching $(\gamma,\delta)\sqsubset 
	(e_i.\alpha,e_i.\beta)$. Without loss of generality we may assume that $(\delta,\gamma)$ is itself minimal with respect to the 
	order $\sqsubset$, thus it is a critical branching. 
	
	Since $(\gamma,\delta)\sqsubset (e_i.\alpha,e_i.\beta)$ we have that
	$$
	e_i.\alpha = p\gamma q\ \ \text{and }\ e_i.g=p \delta q,
	$$
	for some $p,q\in X_1^*$. By applying the $f_i$ Kashiwara operator to the branching $(e_i.\alpha,e_i.\beta)$, we get
	$$
	\alpha = f_i.(p\gamma q)\ \ \text{and }\ g=f_i.(p\delta q).
	$$
	By definition of the Kashiwara operators $f_i$, we have that the action of $f_i$ on the $2$-cells $p\gamma q$ and $p\delta q$ is entirely determined by the action of $f_i$ on their sources, which coincide and are equal to $p u q$ with $u=s_1(\gamma)=s_1(\delta)$ We consider now the three possible cases for the action of $f_i$ on the word $puq$.
	\begin{itemize}
		\item[\di] $f_i.(puq)=(f_i.p)uq$. We then have that $f_i.(p\gamma q)=(f_i.p)\gamma q$ and $f_i.(p\delta q) = (f_i.p)\delta q$, and thus
		$$
		(\alpha,\beta) = ((f_i.p)\gamma q, (f_i.p)\delta q) \sqsupset (\gamma,\delta)
		$$
		implying that $(\alpha,\beta)$ is not critical, thus a contradiction.
		\item[\di] $f_i.(puq)= p (f_i.u)q$. We then have that $f_i.(p\gamma q) = p(f_i.\gamma) q$ and $f_i.(p\delta q) = p(f_i.\delta) q$, and thus
		$$
		(\alpha,\beta) = (p(f_i.\gamma)q,p(f_i.\delta)q) \sqsupset f_i.(\gamma,\delta)
		$$
		implying that $(\alpha,\beta)$ is not critical, thus a contradiction.
		\item[\di] $f_i.(puq)=pu(f_i.q)$. We then have that $f_i.(p\gamma q)=p\gamma (f_i.q)$ and $f_i.(p\delta q) = p\delta (f_i.q)$, and thus
		$$
		(\alpha,\beta) = (p\gamma (f_i.q), p\delta (f_i.q)) \sqsupset (\gamma,\delta)
		$$
		implying that $(\alpha,\beta)$ is not critical, thus a contradiction.
	\end{itemize}
	The assumption that $e_i.(\alpha,\beta)$ is not a critical branching leads to a contradiction in each of these cases, hence we conclude that $e_i.(\alpha,\beta)$ is indeed a critical branching.
	
	The case of $k=f_i$ is proven entirely analogously.
\end{proof}

\subsection{Rewriting on highest weights} \label{sec:Newman and CPL crystal}

Let $\Gamma$ be a crystal. We denote by $\Gamma^0$ the set of elements of highest weight in $\Gamma$. Since $\Gamma$ is an object of $\mathfrak{g}-\textsc{cryst}$, each of the connected components of $\bb_{\Gamma^*}(w)$ contains a unique word of highest weight, that is $\bb_{\Gamma^*}(w)^0=\{w^0\}$ for some $\lambda\in P^+$.

\begin{prop}
	Let $M$ be a crystal monoid. Then $M^0$ is a submonoid of $M$.
\end{prop}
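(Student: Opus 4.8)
The plan is to verify the two defining properties of a submonoid: that $M^0$ contains the unit of $M$, and that $M^0$ is closed under the product. The unit $1 \in M$ is the image of the empty word, which has weight $0$ and on which every $e_i$ acts as $0$; hence $1$ lies in the highest-weight set $M^0$. The content of the statement is therefore the closure under multiplication, and for this I would route the argument through the tensor-product description of the product furnished by Theorem \ref{thm:char of crystal monoids}.

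Concretely, fix $m_1, m_2 \in M^0$, so that $e_i.m_1 = e_i.m_2 = 0$ for all $i \in I$. The first step is the purely crystal-theoretic observation that $m_1 \otimes m_2$ is of highest weight in $M \otimes M$. This is where the asymmetry of the tensor-product rule \eqref{eq:def of e} does the work: since $m_2$ is of highest weight we have $\varepsilon_i(m_2) = 0$ by \textbf{(C2)}, while $\varphi_i(m_1) \geq 0$ always (because $f_i^0.m_1 = m_1 \neq 0$), so the inequality $\varphi_i(m_1) \geq \varepsilon_i(m_2)$ holds and the rule yields $e_i.(m_1 \otimes m_2) = (e_i.m_1) \otimes m_2 = 0$ for every $i$. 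Thus $m_1 \otimes m_2 \in (M \otimes M)^0$.

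The second step transports this conclusion back to $M$ through the multiplication. By the forward direction of Theorem \ref{thm:char of crystal monoids}, the crystal monoid $M$ satisfies \eqref{eq:ccc}, so the map $r \colon M^* \to M$, $r(m_1 \otimes \cdots \otimes m_k) = m_1 \cdots m_k$, is a crystal morphism (this is \eqref{eq:14}). Restricting $r$ to the degree-two summand $M \otimes M$ of $M^*$ gives a crystal morphism sending $m_1 \otimes m_2 \mapsto m_1 m_2$; since crystal morphisms commute with the $e_i$ and send $0$ to $0$, we obtain $e_i.(m_1 m_2) = r(e_i.(m_1 \otimes m_2)) = r(0) = 0$ for all $i$, that is, $m_1 m_2 \in M^0$. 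One may phrase the same step via the isomorphism $\bb_M(m_1 m_2) \cong \bb_{M \otimes M}(m_1 \otimes m_2)$ of Theorem \ref{thm:char of crystal monoids}, using that a crystal isomorphism carries the unique highest-weight vertex of one connected component to that of the other.

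I expect the only genuinely delicate point to be the first step — that the tensor of two highest-weight elements is again of highest weight — since it hinges on reading the case distinction in \eqref{eq:def of e} in the correct direction together with the convention $\varphi_i \geq 0$; the second step is then a routine application of the characterization theorem. An alternative, self-contained route avoids Theorem \ref{thm:char of crystal monoids} entirely: one chooses highest-weight representatives $w_1, w_2 \in \Gamma^*$ of $m_1, m_2$ (possible because $e_i.m = 0$ in $M$ forces $e_i.w = 0$ for a representative $w$), runs the same \eqref{eq:def of e} computation inside the free crystal monoid $\Gamma^*$ to see that the concatenation $w_1 w_2$ is of highest weight, and then uses that the canonical projection $\pi \colon \Gamma^* \to M$ is a crystal morphism to conclude $e_i.(m_1 m_2) = e_i.[w_1 w_2] = 0$.
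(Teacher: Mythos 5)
Your proof is correct and follows essentially the same route as the paper: reduce via Theorem \ref{thm:char of crystal monoids} to showing $m_1\otimes m_2\in(M\otimes M)^0$, then conclude from the tensor-product rule \eqref{eq:def of e} that all $e_i$ kill $m_1\otimes m_2$. Your version is slightly more careful in pinning down which case of \eqref{eq:def of e} applies (via $\varepsilon_i(m_2)=0$ and $\varphi_i(m_1)\geq 0$), whereas the paper simply observes that every possible value of $e_i.(m_1\otimes m_2)$ vanishes since $e_i.m_1=e_i.m_2=0$; both arguments are sound.
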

\begin{proof}
Let $m_1,m_2\in M^0$. From Theorem \ref{thm:char of crystal monoids} we have $\bb_M(m_1m_2)\cong \bb_{M\otimes M}(m_1\otimes m_2)$, which implies that $m_1m_2\in M^0$ if and only if $m_1\otimes m_2 \in (M\otimes M)^0$. Let now $i\in I$. By definition of the tensor product of crystals we have that $e_i.(m_1\otimes m_2)$ equals one of  $(e_i.m_1)\otimes m_2,\ m_1\otimes(e_i.m_2)$ or $0$. But since $m_1$ and $m_2$ are of highest weight, we have $e_i.m_1 = e_i.m_2=0$, implying that $e_i.(m_1\otimes m_2)=0$. This shows that $m_1m_2\in M^0$ hence $M^0$ is indeed a submonoid of $M$.
\end{proof}

\begin{exo}
	Consider $\Gamma = A_n, B_n, C_n$. Then the elements of highest weight in $Pl(\Gamma)$ are the tableaux with $k$-rows, $1\leq k \leq n$ such that all the entries in the $i$-th row are equal to $i$ (see \cite{kashiwara1991crystal},\cite{lecouvey2002schensted}). If we denote by $c_k$ a column with $k$ boxes, whose reading is $12\cdots k$, and since
	$$
	P_c(c_ic_j) = \begin{cases}
	c_ic_j &\ \text{if } i\leq j\\
	c_j c_i &\ \text{otherwise}
	\end{cases}
	$$
	we see that $Pl(\Gamma)^0=\text{Com}(n)$, the commutative monoid on $n$ generators.
\end{exo}
Next we obtain reduced versions of Newman's Lemma and the Critical Pair Lemma for crystal 2-polygraphs.
\subsubsection{Rewriting system at highest weight} Let $X$ be a crystal 2-polygraph. To $X$ we associate the \textit{rewriting system at highest weight}
$$
\mac{R}^0(X)= \left( (X_1^*)^0 \ |\  \step(X)^0 \right).
$$
This is an abstract rewriting system, and we have notions of rewriting sequences, branchings, and local branchings at highest weight. Moreover, the notion of critical branchings is carried over to $\mac{R}^0(X)$ via $\text{Crit}(\mac{R}^0):=\text{Crit}(X)\cap \text{Br}(\mac{R}^0)$. Then from Propositions \ref{prop:some properties of K2} and \ref{prop:kash on branchings} we obtain the following

\begin{prop}
	Let $X$ be a crystal 2-polygraph, and let $\mathcal{P}$ denote one of the properties of 2-polygraphs: termination, confluence, local confluence, confluence of critical branchings. Then $X$ has the property $\mathcal{P}$ if and only if $\mac{R}^0(X)$ has the property $\mac{P}$.
\end{prop}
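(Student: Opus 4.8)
The plan is to prove all four equivalences by a single mechanism: every rewriting datum of $X$ (a sequence, a branching, a confluence diagram) lives in a connected component of the appropriate crystal, and since all these crystals are objects of $\cryst$, that component has a unique highest weight representative which is reached by applying the operators $e_i$ and recovered by applying the $f_i$. So I would first record the structural observation underpinning everything: the source maps $\seq(X)\longrightarrow X_1^*$ and $\bra(X)\longrightarrow X_1^*$ are crystal morphisms, hence by Schur's Lemma in $\cryst$ each connected component maps isomorphically onto a component of $X_1^*$; consequently every $\mathfrak{s}\in\seq(X)$ has a unique highest weight avatar $\mathfrak{s}^0 = e_{i_1}\cdots e_{i_m}.\mathfrak{s}$, and likewise for branchings, and a sequence or branching is of highest weight exactly when its source word is. I would also note that the target map $t_1:\step(X)\longrightarrow X_1^*$ is a crystal morphism: since strict morphisms preserve $\varepsilon_i,\varphi_i,\text{wt}$, the elements $s_1(\alpha)$ and $t_1(\alpha)$ carry identical invariants and so dictate the same Kashiwara behaviour inside the tensor factorization of $ts_1(\alpha)v$ and $tt_1(\alpha)v$. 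In particular a highest weight step has a highest weight target, which is what makes $\mac{R}^0(X)$ a genuine abstract rewriting system on $(X_1^*)^0$ and shows that any rewriting sequence issuing from a highest weight word stays at highest weight.

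Granting this, the forward implications are immediate. If $X$ is terminating then an infinite rewriting sequence of $\mac{R}^0(X)$ would be an infinite rewriting sequence of $X$. If $X$ is (locally) confluent, then for a highest weight (local) branching $(f,g)$ the completing pair $(f',g')$ supplied by $X$ has sources $t_1(f),t_1(g)$, which are of highest weight since targets of highest weight steps are of highest weight; hence $f'$ and $g'$ are highest weight sequences all of whose words are of highest weight, so the entire confluence diagram already lives in $\mac{R}^0(X)$. Restricting this remark to critical branchings yields the fourth property.

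The converse implications are where the $f_i$-lowering is used, through Propositions~\ref{prop:some properties of K2} and \ref{prop:kash on branchings}. For termination, an infinite $\mathfrak{s}\in\seq(X)$ would raise to an infinite $\mathfrak{s}^0$ by Proposition~\ref{prop:some properties of K2}~ii), contradicting termination of $\mac{R}^0(X)$. For local confluence, an arbitrary local branching $b$ raises to a highest weight local branching $b^0$ by Proposition~\ref{prop:kash on branchings}~i); this $b^0$ is confluent in $\mac{R}^0(X)$, hence confluent in $X$, and applying a suitable composite of $f_i$ to return from $b^0$ to $b$ preserves confluence by Proposition~\ref{prop:kash on branchings}~ii), so $b$ is confluent. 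The argument for full confluence is identical but run over all branchings rather than local ones, and the argument for confluence of critical branchings is identical but additionally invokes Proposition~\ref{prop:kash on branchings}~iv) to guarantee that $b^0$ is again critical, so that it qualifies as an element of $\text{Crit}(\mac{R}^0(X))$.

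The step I expect to require the most care is the bookkeeping that makes $\mac{R}^0(X)$ closed and self-contained, namely that the target of a highest weight step is again of highest weight, equivalently that $t_1:\step(X)\longrightarrow X_1^*$ is a crystal morphism; this is delicate precisely because the tensor product rule for $e_i,f_i$ is sensitive to the $\varepsilon_i,\varphi_i$ of each factor. The resolution is exactly that $s_1(\alpha)$ and $t_1(\alpha)$ share these invariants because $s_1,t_1$ are strict morphisms, so the Kashiwara operators branch identically over $ts_1(\alpha)v$ and $tt_1(\alpha)v$. Everything else is a uniform translation of each rewriting property into the corresponding invariance clause of the two cited propositions.
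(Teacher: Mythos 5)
Your proposal is correct and follows the same route the paper takes: the paper derives this proposition directly from Proposition \ref{prop:some properties of K2} (invariance of length and coincidences of rewriting sequences under the Kashiwara operators) and Proposition \ref{prop:kash on branchings} (preservation of locality, confluence, and criticality of branchings), raising each datum to its unique highest weight representative via the $e_i$ and returning via the $f_i$, exactly as you do. Your additional verification that the target of a highest weight step is again of highest weight --- so that $\mac{R}^0(X)$ is genuinely closed --- is a detail the paper leaves implicit, and your resolution via the fact that $s_1(\alpha)$ and $t_1(\alpha)$ have isomorphic connected components (hence identical $\varepsilon_i,\varphi_i$, forcing the tensor-product rule to branch identically) is the right one.
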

This result gives reduced versions of Newman's Lemma and the Critical Pair Lemma for crystal 2-polygraphs.
\begin{cor} Let $X$ be a crystal 2-polygraph, and $\mac{R}^0(X)$ the rewriting system at highest weight.
	\begin{itemize}
		\item[\textbf{i)}] \textbf{(Newmann's Lemma)} If $\mac{R}^0(X)$ is terminating and locally confluent, then $X$ is terminating and confluent.
		\item[\textbf{ii)}] \textbf{(Critical Pair Lemma)}  If the critical branchings of $\mac{R}^0(X)$ are confluent, then $X$ is locally confluent.
	\end{itemize}
\end{cor}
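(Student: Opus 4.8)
The plan is to derive both parts formally from the preceding Proposition, which asserts that a crystal $2$-polygraph $X$ and its highest-weight rewriting system $\mac{R}^0(X)$ share each of the four properties (termination, confluence, local confluence, confluence of critical branchings), together with the classical Newman's Lemma and Critical Pair Lemma recalled at the start of this section. The key observation is that the Proposition is stated as an \emph{equivalence} for each property, so in particular every hypothesis placed on $\mac{R}^0(X)$ can be transferred back to $X$; once this is done, the corollary is nothing more than an application of the two classical convergence criteria to $X$ itself.

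For part \textbf{i)}, I would apply the Proposition twice: taking $\mathcal{P}$ to be termination and then taking $\mathcal{P}$ to be local confluence, the hypothesis that $\mac{R}^0(X)$ is terminating and locally confluent yields at once that $X$ is terminating and locally confluent. The classical Newman's Lemma applied to $X$ then gives that $X$ is confluent, and combined with the termination already obtained this is exactly the claimed conclusion. For part \textbf{ii)}, I would apply the Proposition with $\mathcal{P}$ the property ``confluence of critical branchings'': the hypothesis that the critical branchings of $\mac{R}^0(X)$ are confluent transfers to the statement that the critical branchings of $X$ are confluent, and the classical Critical Pair Lemma applied to $X$ immediately delivers local confluence of $X$.

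Thus the corollary itself is purely formal; all of the genuine content is concentrated in the Proposition, whose verification is where any real care is needed. That verification rests on Propositions \ref{prop:some properties of K2} and \ref{prop:kash on branchings}: one must check that $\mac{R}^0(X)$ faithfully records each property of $X$. Concretely, the main point to confirm is that an infinite rewriting sequence of $X$ can be translated by the Kashiwara operators into an infinite rewriting sequence whose source is of highest weight — using that every connected component of $X_1^*$ contains a unique highest-weight vertex, and that the operators $e_i$ preserve the length and finiteness of sequences by Proposition \ref{prop:some properties of K2} — and, symmetrically, that a non-confluent (respectively critical) branching of $X$ lifts to one at highest weight, which is governed by Proposition \ref{prop:kash on branchings}. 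The only subtlety I anticipate is this lifting of counterexamples to the highest-weight level in both the ``only if'' and the ``if'' directions; granting that bookkeeping, the chaining with Newman's Lemma and the Critical Pair Lemma is routine.
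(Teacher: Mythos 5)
Your derivation is correct and matches the paper's intent exactly: the paper states this corollary as an immediate consequence of the preceding equivalence proposition, and your two applications of that proposition followed by the classical Newman's Lemma and Critical Pair Lemma are precisely the intended argument. Your closing remarks correctly locate the genuine content in the proposition itself (via Propositions \ref{prop:some properties of K2} and \ref{prop:kash on branchings}), which the corollary merely packages.
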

%\begin{proof}
%	By Newman's Lemma for \qwe-polygraphs \ref{thm:Newmann crystal}, we see that if $X$ is locally confluent, then the critical branchings $\text{Crit}(\mac{R})$ are confluent. Since $\text{Crit}(\mac{R}^0)\subset \text{Crit}(\mac{R})$, we have that the critical branchings of $\mac{R}^0$ are confluent.
%	
%	Conversely, assume that the critical branchings in $\text{Crit}(\mac{R}^0)$ are confluent. We then have
%	$$
%	\text{Crit}(\mac{R})=\mac{K}.(\text{Crit}(\mac{R}^0)).
%	$$
%	Then by Proposition \ref{prop:kash on branchings} we have that the critical branchings in $\text{Crit}(\mac{R})$ are confluent, and by Theorem \ref{thm:CPL} we obtain that $X$ is locally confluent, which is what we wanted to show.
%\end{proof}

\section{Crystal structure on free 2-monoids} \label{sec:crystal structure on squier}
In Section \ref{sec:K-rewriting} we have introduced the class of crystal monoids and the notion of a crystal 2-polygraph as the adapted polygraph for studying crystal monoids via rewriting theory. We have seen that the crystal structure extends to the sets of rewriting sequences and to the different kinds of branchings, which leads to reduced verifications of certain rewriting properties of crystal polygraphs.

In this section we show that the crystal structure of a 2-polygraph $X$ extends to the free 2-monoids $X_2^*$ and $X_2^\top$, as described in \ref{subsec:2 cats and 2 dogs}. The approach towards doing this consists in showing that the set of $i$-cells admits a crystal structure, and we specify the compatibility of the Kashiwara operators with the $\star_0$ and $\star_1$ compositions. In particular we show that for a convergent presentation $X$, Squier's coherent extension $\Omega$ also admits a crystal structure, which further extends to the free (3,1)-monoid generated by it. This in turn reduces the computation of coherence from convergence to considerations at highest weight. In what follows, we denote the Kashiwara operators $\{e_i,f_i\ |\ i\in I\}$ simply by $k$.

\subsection{2-dimensional crystal monoids}

We introduce here a notion of a 2-dimensional crystal monoid. This is simply a 2-monoid whose underlying sets of 1-cells and 2-cells are crystals, and there is compatibility between the Kashiwara operators and the categorical compositions $\star_0$ and $\star_1$.

\begin{defin}\label{def:crystal 2 monoids} Let $m\leq 2$. A crystal $(2,m)$-monoid is a $(2,m)$-monoid $\mac{C}$ that admits a crystal structure as follows:
	\begin{itemize}
		\item[{\bf (CM1)}] the set of $1$-cells of $\mac{C}$ is a crystal monoid with respect to $\star_0$,
		\item[{\bf (CM2)}] the set of $2$-cells of $\mac{C}$ is a crystal,
		\item[{\bf (CM3)}] the source and target maps from the $2$-cells of $\mac{C}$ to the $1$-cells in $\mac{C}$ are crystal morphisms,
		\item[{\bf (CM4)}] for $\alpha,\beta$ $2$-cells in $\mac{C}$ and a Kashiwara operator $k$ we have
		$$
		k.(\alpha \star_0 \beta)=\left\{\begin{array}{ll} (k.\alpha)\star_0 \beta, &\quad \text{if } k.(s(\alpha)s(\beta))=(k.s(\alpha))s(\beta) \\ \alpha\star_0(k.\beta), &\quad \text{if } k.(s(\alpha)s(\beta)) = s(\alpha)(k.s(\beta))\\ 0, &\quad \text{otherwise.}\end{array}\right.
		$$
		$$
		k.(\alpha\star_1\beta)=\left\{\begin{array}{ll} (k.\alpha)\star_1(k.\beta), &\quad \text{if } k.(s(\alpha))\neq 0 \\ 0, & \quad \text{otherwise}. \end{array}\right.
		$$
	\end{itemize}
\end{defin}

We remark here that as there is a single $0$-cell in a crystal $(2,m)$-monoid, any two $1$-cells are $\star_0$-composable. Hence often times we denote the composition $u\star_0 v$ by a concatenation $uv$. The idea behind this notion of 2-dimensional crystal monoids is that the free 2-monoid $X_2^*$ and the free $(2,1)$-monoid $X_2^\top$ generated by $X$ both satisfy the conditions of Definition \ref{def:crystal 2 monoids}.

\subsubsection{Crystal structure on $X_2^*$}
Let $X=(X_1,X_2)$ be a crystal $2$-polygraph, and consider the free $2$-monoid $X_2^*$ generated by $X$ as in \ref{subsec:free2monoids}. By definition, there is a crystal structure on the 1-cells $X_1^*$ and on the generating $2$-cells $X_2$. Here we extend the crystal structure to all the $2$-cells of $X_2^*$ so that it is compatible with Definition \ref{def:crystal 2 monoids}. We do this progressively: first for identity 2-cells and the generating 2-cells of $X_2$; and then for the $\star_0$ and $\star_1$ compositions of such cells.

Consider a 1-cell $u\in X_2^*$ and a 2-cell $\gamma:u\Longrightarrow v\in X_2^*$. We define the weight map $\text{wt}(\gamma)=\text{wt}(s_1(\gamma))$, and the Kashiwara operator $k$ first on the identity 2-cell $1_u$ and then on $\gamma$ via
\begin{itemize}
	\item[\di] $k.1_u=1_{k.u}$ if $k.u\neq 0$, and otherwise $k.1_u = 0$, $u\in X_2^*$
	
	\item[\di] $k.\gamma : k.u \Longrightarrow k.v$ if $k.u \neq 0$, and $k.\gamma = 0$ otherwise.
\end{itemize}

We then extend the crystal structure to $\star_0$ and $\star_1$ compositions of 2-cells by defining the weight map as $\text{wt}(\alpha\star_0\beta) = \text{wt}(\alpha) + \text{wt}(\beta)$, and $\text{wt}(\alpha\star_1\beta) = \text{wt}(\alpha) = \text{wt}(\beta)$, and by defining the Kashiwara operators as in \textbf{(CM4)}.

We show that the definition of the Kashiwara operators on $2$-cells of $X_2^*$ is compatible with the defining axioms of $X_2^*$. We thus verify the axioms \eqref{eq:peiffer branchings} and \eqref{eq:ugly}. 

\begin{itemize}
	\item[$(FM1)$] $\alpha w v \star_1 u' w \beta \equiv uw\beta \star_1 \alpha w v'$, for  $\alpha:u\Longrightarrow u'$, $\beta:v\Longrightarrow v'$ in $X_2^*$, and $w\in X_1^*$.
	
	\noindent Consider a Kashiwara operator $k$ such that  $k.(\alpha w v \star_1 u' w \beta) \neq 0$. Then by the action of $k$ defined as in \textbf{(CM4)} we have that $k.s_1(\alpha w v \star_1 u' w \beta)\neq 0$, and from \eqref{eq:peiffer branchings} we have
	$$
	s_1(\alpha w v \star_1 u' w \beta) = s_1(uw\beta \star_1 \alpha w v'),
	$$ 
	hence $k.s_1(uw\beta \star_1 \alpha w v')\neq 0$, and thus $k.(uw\beta\star_1 \alpha w v')\neq 0$. Then by \textbf{(CM4)} we obtain
	\begin{equation}\label{eq:part for A1}
	\begin{array}{c}
	k.(\alpha w v \star_1 u' w \beta)= (k.(\alpha w v ))\star_1 (k.(u' w \beta))\\
	k.(uw\beta \star_1 \alpha w v')=(k.(uw\beta))\star_1(k.(\alpha w v')).
	\end{array}
	\end{equation}
	The operator $k$ acts on $\alpha w v'$ by acting on precisely one of the \textit{components} $\alpha$, $w$, or $v'$. We distinguish these three cases as follows. Say if $k$ acts on $\alpha w v$ by acting on its $l$-th component for $l\in\{1,2,3\}$, then $k$ acts on the $u'w\beta$, $uw\beta$, and $\alpha w v'$ by acting on their $l$-th components as well. Thus from \eqref{eq:part for A1} we have
	$$
	\begin{array}{lc}
	l = 1:\ & k.(\alpha w v \star_1 u'w\beta) = (k.\alpha)wv\star_1 (k.u')w\beta \\
	& k.(uw\beta\star_1 \alpha w v') = (k.u)w\beta\star_1 (k.\alpha)wv'\\
	$\ $&$\ $  \\
	l = 2:\ & k.(\alpha w v \star_1 u'w\beta) = \alpha (k.w)v\star_1 u'(k.w)\beta \\
	& k.(uw\beta\star_1 \alpha w v') =u'(k.w)\beta\star_1 \alpha(k.w)v'\\
	$\ $ & $\ $ \\
	l = 3:\ & e_i.(\alpha w v \star_1 u'w\beta) = \alpha w e_i.v\star_1 u'w(e_i.\beta) \\
	& k.(uw\beta\star_1 \alpha w v') =u'w(k.\beta)\star_1 \alpha w(k.v')	 
	\end{array}
	$$
	and we see that in each of these cases we indeed have
	$$
	k.(\alpha w v \star_1 u'w\beta) \equiv k.(uw\beta\star_1 \alpha w v').
	$$
	Thus the definition of the Kashiwara operators $e_i$ and $f_i$ on $X_2^*$ is compatible with \eqref{eq:peiffer branchings}.
	
	\item[$(FM2)$]
	\begin{equation}\label{eq:A2}
	\begin{array}{c}
	(u_1\alpha_1u_1'\star_1 \cdots \star_1 u_m\alpha_m u'_m)\star_0 (v_1\beta_1 v_1' \star_1\cdots \star_1 v_n\beta_n v_n')\\
	= u_1\alpha_1 u_1'v_1s(\beta_1)v_1'\star_1\cdots \star_1 u_m\alpha_m u_m'v_1s(\beta_1)v_1'\\
	\star_1 u_m t(\alpha_m)u_m'v_1\beta_1 v_1' \star _1 \cdots \star_1 u_m t(\alpha_m)u_m'v_n\beta_nv_n'.
	\end{array}
	\end{equation} 
	Denote by $L$ respectively $R$ the expression to the left respectively to the right of $\star_0$ in the first line of \eqref{eq:A2}, and by $F$ the expression in the second and third lines of \eqref{eq:A2}. In other words, we rewrite \eqref{eq:A2} as
	$
	L\star_0 R = F.
	$
	Let $k$ be a Kashiwara operator such that $k.(L\star_0 R)\neq 0$. By \textbf{(CM4)} we have $k.(L\star_0 R)= (k.L)\star_0 R$ or $k.(L\star_0 R)= L \star_0(k.R)$. We consider here the first case, as the second one is handled similarly.
	
	Since $k.(L\star_0 R)=(k.L)\star_0 R$, we have that $k.(s(L\star_0 R))=k.(s(L)s(R))=(k.s(L))s(R)$. This implies that for any $1\leq p \leq m$ and $1\leq q \leq n$ we have
	\begin{equation}\label{eq:proof axiom 2}
	k.(u_p \alpha_p u'_k v_q s(\beta_q)v_q')= k.(u_p \alpha_p u'_p)v_q s(\beta_q)v_q'
	\end{equation}
	
	Then by \textbf{(CM4)} and \eqref{eq:ugly} we obtain
	$$
	\begin{array}{c}
	k.(L\star_0 R) = (k.L)\star_0 R = (k.(u_1\alpha_1u_1')\star_1 \cdots \star_1 (k.(u_m\alpha_m u_m'))) \star_0 R=\\
	= (k.(u_1\alpha_1 u_1'))v_1s(\beta_1)v_1'\star_1\cdots \star_1 (k.(u_m\alpha_m u_m'))v_1s(\beta_1)v_1'\\
	\star_1 (k.(u_mt(\alpha_m)u_m'))v_1\beta_1 v_1' \star _1 \cdots \star_1 (k.(u_m t(\alpha_m)u_m'))v_n\beta_nv_n'
	\end{array}
	$$
	and finally from \eqref{eq:proof axiom 2} we obtain
	$$
	\begin{array}{c}
	k.(L\star_0 R) = (k.(u_1\alpha_1 u_1'v_1s(\beta_1)v_1'))\star_1\cdots \star_1 (k.(u_m\alpha_m u_m'v_1s(\beta_1)v_1'))\\
	\star_1 (k.(u_m t(\alpha_m)u_m'v_1\beta_1 v_1')) \star _1 \cdots \star_1 (k.(u_m t(\alpha_m)u_m'v_n\beta_nv_n')) = k.F.
	\end{array}
	$$
\end{itemize}
This shows that the action of the Kashiwara operators on $2$-cells is compatible with the defining axioms $(FM1)$ and $(FM2)$ of the free 2-monoid $X_2^*$ generated by $X$ as in \ref{subsec:2 cats and 2 dogs}.

Next we show how the crystal structure is extended to free (2,1)-monoids generated by crystal polygraphs.

\subsubsection{Crystal structure on $X_2^\top$} Let $X=(X_1,X_2)$ be a crystal 2-polygraph. Recall the $2$-polygraph $X^{\pm}=(X_1,X_2\sqcup X_2^-)$, as in \ref{subsec:free2monoids}. We define a crystal structure on $X^-$ by setting $\text{wt}(\alpha^-) = \text{wt}(\alpha)$, and the Kashiwara operators $k$ as follows: for $\alpha\in X_2$ such that $k.\alpha \neq 0$, we set $k.\alpha^- = (k.\alpha)^-$. Then $X^\pm$ is a crystal 2-polygraph and we can define a crystal structure on the set of 2-cells of $(X_2^\pm)^*$ as in the previous paragraph. The following proposition shows that the Kashiwara operators descend to the 2-cells of $X_2^\top$.

\begin{prop} \label{prop: jsut a random proposition} Let $X$ be a crystal 2-polygraph, $\alpha\in X_2$, $u,v\in X_1^*$, and $k$ a Kashiwara operator such that $k.(u\alpha v)\neq 0$. Then
	$$
	k.(u\alpha v\star_1 u\alpha^-v)\equiv1_{k.(us(\alpha)v)}\quad \text{and}\quad k.(u\alpha^-v\star_1 u \alpha v)\equiv1_{k.(ut(\alpha)v)}\\
	$$
\end{prop}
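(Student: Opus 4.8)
The plan is to verify that the action of the Kashiwara operators is compatible with the defining relations \eqref{eq:equations for x2top} of $X_2^\top$. The heart of the argument is the single identity $k.(u\alpha^- v)=(k.(u\alpha v))^-$, from which both asserted equivalences follow at once by one application of \eqref{eq:equations for x2top}.

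First I would use \textbf{(CM4)} to distribute $k$ over the $\star_1$-composition. Since $k.(u\alpha v)\neq 0$ forces $k.(us(\alpha)v)\neq 0$, and the composite $u\alpha v\star_1 u\alpha^- v$ has source $us(\alpha)v$, the operator distributes as
$$
k.(u\alpha v\star_1 u\alpha^- v)=(k.(u\alpha v))\star_1(k.(u\alpha^- v)),
$$
and symmetrically for the second composite. It then remains to identify $k.(u\alpha^- v)$ with $(k.(u\alpha v))^-$.

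To establish this identification I would argue that $k$ acts on the two $2$-cells $u\alpha v$ and $u\alpha^- v$ through the same one of the three components $u$, $\alpha$, $v$. This is the point requiring care: the two cells have different sources $us(\alpha)v$ and $ut(\alpha)v$, so a priori $k$ could select different tensor factors. However, since $s$ and $t$ are morphisms of crystals they preserve $\veps$ and $\vphi$, whence $\veps(s(\alpha))=\veps(\alpha)=\veps(t(\alpha))$ and $\vphi(s(\alpha))=\vphi(\alpha)=\vphi(t(\alpha))$; consequently the tensor products $u\otimes s(\alpha)\otimes v$ and $u\otimes t(\alpha)\otimes v$ have the same $\veps$- and $\vphi$-values in each factor, and by \eqref{eq:def of e} the operator $k$ selects the same factor in both. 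Treating the three cases and using the defining rule $k.\alpha^-=(k.\alpha)^-$ of the crystal structure on $X^-$, one checks directly that $k.(u\alpha^- v)=(k.(u\alpha v))^-$: when $k$ hits $u$ (resp. $v$) both cells acquire the factor $k.u$ (resp. $k.v$) leaving the dualization untouched, and when $k$ hits the middle factor one gets $u(k.\alpha)v$ and $u(k.\alpha)^- v=(u(k.\alpha)v)^-$.

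Finally I would conclude by the relations \eqref{eq:equations for x2top}. Writing $\gamma=k.(u\alpha v)$, which is a $2$-cell with source $k.(us(\alpha)v)$ and target $k.(ut(\alpha)v)$, the first composite becomes $\gamma\star_1\gamma^-\equiv 1_{k.(us(\alpha)v)}$ and the second becomes $\gamma^-\star_1\gamma\equiv 1_{k.(ut(\alpha)v)}$, which are exactly the asserted identities. I do not expect any genuine obstacle beyond the bookkeeping in the middle case; the one substantive point is the equality of $\veps$ and $\vphi$ on $s(\alpha)$ and $t(\alpha)$, which forces the component selected by $k$ to be consistent across the dual pair and which is guaranteed by $s$ and $t$ being crystal morphisms.
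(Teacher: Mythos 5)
Your proposal is correct and follows essentially the same route as the paper: distribute $k$ over $\star_1$, reduce to the identity $k.(u\alpha^- v)=(k.(u\alpha v))^-$ via a case analysis on which of the three components $u$, $\alpha$, $v$ the operator acts, and conclude by the defining relations \eqref{eq:equations for x2top} of $X_2^\top$. The one place you are more careful than the paper is in justifying that $k$ selects the \emph{same} component of $us(\alpha)v$ and of $ut(\alpha)v$ (via $\veps$ and $\vphi$ being preserved by the crystal morphisms $s$ and $t$), a point the paper's proof passes over rather quickly.
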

\begin{proof}
	We prove here the first relation, as the other one is done analogously. We consider the three possibilities of the action of $e_i$ on $u\alpha v$.
	
	If $k.(u\alpha v)=(k.u)\alpha v$, we have that $k.(us(\alpha)v)=(k.u)s(\alpha)v$. This means that $k.(ut(\alpha^-)v)=(k.u)t(\alpha^-)v$ hence we get
	$$
	k.(u\alpha v\star_1 u\alpha^-v)=(k.u)\alpha v \star_1 (k.u)\alpha^-v \equiv1_{(k.u)s(\alpha)v)}=1_{k.(us(\alpha)v)}.
	$$
	Similarly we prove the case of $k$ acting on $v$.
	
	If $k.(u\alpha v)= u (k.\alpha) v$. We then have $k.(u\alpha^- v) = u (k.\alpha^-) v= u(k.\alpha)^-v$, thus indeed we get
	$$
	k.(u\alpha v\star_1 u\alpha^-v)\equiv1_{k.(us(\alpha)v)}
	$$
	which is what we wanted to show.
\end{proof}

Thus $X_2^\top$ also admits a crystal structure, and by the constructions it is evident that both $X_2^*$ and $X_2^\top$ satisfy the conditions of Definition \ref{def:crystal 2 monoids}. We summarize this in the following result
\begin{thm}\label{thm:free crystal 2-mon}
	Let $X$ be a crystal 2-polygraph. Then the free 2-monoid $X_2^*$, respectively the free (2,1)-monoid $X_2^\top$ generated by $X$ is a crystal 2-monoid, respectively crystal (2,1)-monoid.
\end{thm}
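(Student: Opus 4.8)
The plan is to verify the four axioms \textbf{(CM1)}--\textbf{(CM4)} of Definition \ref{def:crystal 2 monoids} for each of $X_2^*$ and $X_2^\top$, assembling the constructions carried out in the preceding paragraphs. Since both free monoids share the same $1$-cells $X_1^*$, axiom \textbf{(CM1)} holds uniformly: as $X_1$ is a crystal, the free monoid $X_1^*=\bigsqcup_{n\geq 0}X_1^{\otimes n}$ is a free crystal monoid, hence a crystal monoid with respect to $\star_0$.

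For axiom \textbf{(CM2)} I would verify that the weight map and Kashiwara operators declared on the $2$-cells satisfy \textbf{(C1)}--\textbf{(C4)}. The point is that these structure maps are defined to mirror those on the source: $\text{wt}(\gamma)=\text{wt}(s_1(\gamma))$, and $k.\gamma$ is nonzero exactly when $k.s_1(\gamma)$ is, in which case $k.\gamma:k.s_1(\gamma)\Longrightarrow k.t_1(\gamma)$. Consequently $\varepsilon$ and $\varphi$ of a $2$-cell agree with those of its source, and axioms \textbf{(C1)}--\textbf{(C4)} are inherited from the crystal $X_1^*$, using that $\bb_{X_1^*}(s_1(\gamma))\cong\bb_{X_1^*}(t_1(\gamma))$ because $\sim_{X_2}$ is a crystal congruence. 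The same mirroring yields \textbf{(CM3)} at once: by construction $s_1(k.\gamma)=k.s_1(\gamma)$ and $t_1(k.\gamma)=k.t_1(\gamma)$, so source and target commute with all Kashiwara operators and preserve weight. Axiom \textbf{(CM4)} holds by definition, since the operators on $\star_0$- and $\star_1$-composites were declared by the very formulas of \textbf{(CM4)}.

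The genuine content, and the step I expect to be the main obstacle, is \emph{well-definedness}: the $2$-cells of $X_2^*$ are equivalence classes modulo the relations \eqref{eq:peiffer branchings} and \eqref{eq:ugly}, so one must check that $k.(-)$ descends to these classes rather than merely being defined on representatives. This is precisely the case analysis already performed above, where applying $k$ to both sides of \eqref{eq:peiffer branchings} and \eqref{eq:ugly} and splitting according to which component of the common source $k$ acts on shows that the two sides remain equal after the action of $k$. Granting these computations, the crystal structure is well-defined on $X_2^*$, all four axioms hold, and $X_2^*$ is a crystal $2$-monoid.

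Finally, for $X_2^\top$ I would first note that the dual polygraph $X^{\pm}=(X_1,X_2\sqcup X_2^-)$ is again a crystal $2$-polygraph, with $\text{wt}(\alpha^-)=\text{wt}(\alpha)$ and $k.\alpha^-=(k.\alpha)^-$, so the preceding argument endows $(X_2^{\pm})^*$ with a crystal $2$-monoid structure. It then remains only to check compatibility with the inverse relations \eqref{eq:equations for x2top}, which is exactly the content of Proposition \ref{prop: jsut a random proposition}: applying $k$ to $u\alpha v\star_1 u\alpha^-v$ and to $u\alpha^-v\star_1 u\alpha v$ returns the corresponding identity $2$-cells. Hence the Kashiwara operators descend to $X_2^\top$, and the same verification of \textbf{(CM1)}--\textbf{(CM4)} shows that $X_2^\top$ is a crystal $(2,1)$-monoid.
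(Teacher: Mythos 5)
Your proposal is correct and follows essentially the same route as the paper: the paper states this theorem as a summary of the immediately preceding constructions (the definition of the Kashiwara operators on $2$-cells, the case-by-case verification of compatibility with \eqref{eq:peiffer branchings} and \eqref{eq:ugly}, and Proposition \ref{prop: jsut a random proposition} for \eqref{eq:equations for x2top}), and you assemble exactly those ingredients, correctly identifying well-definedness modulo the defining relations of the free $2$-monoid as the only nontrivial step. Your axiom-by-axiom check of \textbf{(CM1)}--\textbf{(CM4)} is in fact slightly more explicit than the paper's ``by the constructions it is evident,'' but it is the same argument.
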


Similarly to the discussion in \ref{sec:Newman and CPL crystal}, we specify here the highest-weight subobjects of crystal 2-monoids.

\begin{defin}
	Let $\mac{C}$ be a crystal 2-monoid. The full subcategory of $\mac{C}$ consisting of the 1-cells and 2-cells of highest weight, is a 2-submonoid of $\mac{C}$ and we denote it by $\mac{C}^0$.
\end{defin}
\subsection{Coherent presentations for crystal monoids}

Here we adapt the notions of congruences on 2-monoids, cellular extensions, and normalization strategies to the context of crystal monoids. Further we prove the main result of this article which states that Squier's coherent extension of a convergent crystal 2-polygraph $X$ is entirely determined by the parallel cells of highest weight of $X_2^\top$.

\subsubsection{Crystal congruences on crystal 2-monoids}\label{subsec:crystal on spheres} Let $C$ be a crystal 2-monoid, and recall the set of 2-spheres $\text{Sph}(\mac{C})$ of $\mac{C}$. The elements of $\text{Sph}(\mac{C})$ are pairs of 2-cells $(f,g)$ that have the same source and same target. This implies that the crystal structure of $\mac{C}$ extend to $\text{Sph}(\mac{C})$. Indeed, we the weight map on $\sph(\mac{C})$ is given by $\text{wt}((f,g)) = \text{wt}(s_1(f))$, and the Kashiwara operator $k$ acts via $k.(f,g) = (k.f,k.g)$ if $k.s_1(f)\neq 0$. We now define the following concepts in the context of crystal monoids.

\begin{defin}
	Let $\mac{C}$ be a crystal 2-monoid, $\equiv$ a congruence on $\mac{C}$, $\Omega$ a cellular extension of $\mac{C}$, and $X=(X_1,X_2,X_3)$ a (3,1)-polygraph
	\begin{itemize}
		\item[\textbf{i)}] $\equiv$ is called a \textit{crystal congruence} on $\mac{C}$ if for any Kashiwara operator $k$ and relation between 2-cells $f \equiv g$ such that $k.f\neq 0$, we have $k.f \equiv k.g$.
		\item[\textbf{ii)}] $\Omega$ is called a \textit{crystal cellular extension} if $\Omega$ is a subcrystal of $\sph(\mac{C})$.
		\item[\textbf{iii)}] $X$ is called a \textit{crystal (3,1)-polygraph} if $(X_1,X_2)$ is a crystal 2-polygraph, and $X_3$ is a crystal cellular extension of $X_2^\top$.
	\end{itemize}
\end{defin}
Note that for a crystal cellular extension $\Omega$ of $\mac{C}$, the congruence $\equiv_\Omega$ on $\mac{C}$ generated by $\Omega$ is a crystal congruence. Recall from Theorem \ref{thm:Squier} that Squier's coherence theorem extends a convergent presentation to a coherent one by taking $\Omega$ to be a family of generating confluences. The notion of normalization strategies as in \ref{subsec:norm} gives an algorithmic method of determining a family of generating confluences. Next we consider a type of normalization strategy compatible with the crystal structure.

\subsubsection{Normalization strategies for crystal 2-polygraphs}\label{subsec:normalization strategies}
Let $X$ be a crystal 2-polygraph and $\overline{X}$ the crystal monoid it presents, and a section $r:\ov{X}\longrightarrow X_1^*$ of $\ov{X}$. Recall that we often denote the image $r(w)$ by $\widehat{w}$. We say that $r$ is a \textit{crystal section} if $r$ is a crystal morphism. In particular, for a crystal section $r$, $w\in X_1^*$, and a Kashiwara operator $k$ such that $k.w\neq 0$, we have $\widehat{k.w} = k.\widehat{w}$. Note that if $X$ is convergent, one can take $r$ to be such that $\widehat{w}$ is the normal form of $w$. Clearly then $r$ is a crystal section.

Consider now a normalization strategy $\sigma: X_1^*\longrightarrow X_2^\top$ for a crystal section $r$, that is a map from $X_1^*$ to the set of $2$-cells in $X_2^\top$. Let $w\in X_1^*$ and $k$ a Kashiwara operator such that $k.w\neq 0$. Then we have
\begin{equation}\label{eq:norm}
\sigma(k.w)= (k.w\Longrightarrow \widehat{k.w})=k. (w\Longrightarrow \widehat{w}) = k.\sigma(w).
\end{equation}
We call such a $\sigma$ a \textit{crystal normalization strategy.} We then have the following version of Squier's coherent completion theorem in the context of crystal monoids.

\begin{thm}[Squier's coherence theorem for crystal 2-polygraphs]\label{thm:squier 2.0}
	Let $X$ be a convergent crystal 2-polygraph, $r$ a crystal section, and $\sigma$ a crystal normalization strategy for $r$. Let $\Omega$ be the family of generating confluences obtained from $r$ and $\sigma$. Then $\Omega$ is a crystal cellular extension of $X_2^\top$, and the 2-spheres in $\Omega$ are entirely determined by the critical branchings of highest weight of $X$.
\end{thm}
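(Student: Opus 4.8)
The plan is to exhibit the assignment $(f,g)\mapsto A_{f,g}$, which sends each critical branching of $X$ to its generating confluence, as a morphism of crystals; once this is done, $\Omega$ inherits the crystal structure of $\text{Crit}(X)$, sits inside $\sph(X_2^\top)$ as a subcrystal, and the reduction to highest weight is immediate from the fact that every connected component of $\text{Crit}(X)$ has a unique highest weight vertex. First I would recall the explicit shape of the generating confluence. For a critical branching $(f,g)$ with $f:u\Longrightarrow v$ and $g:u\Longrightarrow w$, writing $v=t_1(f)$ and $w=t_1(g)$, convergence of $X$ produces a common normal form, and the family $\Omega$ built from $r$ and $\sigma$ assigns to $(f,g)$ the $2$-sphere $A_{f,g}=(f\star_1\sigma_v,\ g\star_1\sigma_w)$ of $X_2^\top$, as in \eqref{eq:conf diag norm strat}.

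The key computation is to apply a Kashiwara operator $k$ to this $2$-sphere and verify that it lands on the generating confluence of $k.(f,g)$. Assume $k.(f,g)\neq 0$; since the two branches share a source this is equivalent to $k.s_1(f)=k.s_1(g)\neq 0$, so both $k.f$ and $k.g$ are nonzero. Using axiom \textbf{(CM4)} of Definition \ref{def:crystal 2 monoids} I would compute
\[
k.(f\star_1\sigma_v)=(k.f)\star_1(k.\sigma_v).
\]
Since $\sigma$ is a crystal normalization strategy, \eqref{eq:norm} gives $k.\sigma_v=\sigma_{k.v}$, and since the target map $t_1$ is a crystal morphism, $k.v=k.t_1(f)=t_1(k.f)$. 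Combining these yields $k.(f\star_1\sigma_v)=(k.f)\star_1\sigma_{t_1(k.f)}$, and symmetrically for the $g$-branch, so that $k.A_{f,g}=A_{k.f,k.g}$.

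It then remains only to observe that $k.(f,g)$ is again a critical branching, which is exactly Proposition \ref{prop:kash on branchings} \textbf{iv)}; hence $A_{k.f,k.g}\in\Omega$. This shows $\Omega$ is closed under the Kashiwara operators, so it is a subcrystal of $\sph(X_2^\top)$, i.e. a crystal cellular extension of $X_2^\top$. For the reduction to highest weight, recall that $\text{Crit}(X)$ is an object of $\mathfrak{g}\textsc{-cryst}$, so each of its connected components contains a unique highest weight critical branching $(f^0,g^0)$, and every $(f,g)$ in that component is obtained from $(f^0,g^0)$ by a sequence of operators $f_i$. By the identity $k.A_{f,g}=A_{k.f,k.g}$ just established, the generating confluence $A_{f,g}$ is obtained from $A_{f^0,g^0}$ by applying the same sequence of $f_i$; therefore $\Omega$ is entirely determined, as a crystal, by the data $A_{f^0,g^0}$ ranging over the highest weight critical branchings of $X$.

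I expect the only delicate point to be bookkeeping the compatibility of $k$ with the composite $\star_1$ and with $\sigma$ at the same time: one must check that the case distinctions in \textbf{(CM4)} (whether $k$ acts on the first or on the second factor of the source) do not affect the conclusion, and that the identity $k.\sigma_v=\sigma_{k.v}$ is being applied to the genuinely composite normalization $2$-cell $\sigma_v$ of $X_2^\top$, not merely to a single rewriting step. Once these identifications are secured, the theorem follows formally from Proposition \ref{prop:kash on branchings} \textbf{iv)} together with the uniqueness of highest weight vertices in $\mathfrak{g}\textsc{-cryst}$.
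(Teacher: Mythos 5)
Your argument is correct and follows the paper's own proof essentially step for step: the same computation of $k.A_{f,g}$ via \textbf{(CM4)}, the identity $k.\sigma_v=\sigma_{k.v}$ from \eqref{eq:norm}, the fact that $t_1$ is a crystal morphism, Proposition \ref{prop:kash on branchings} \textbf{iv)} to conclude $k.(f,g)\in\text{Crit}(X)$, and the uniqueness of highest weight vertices for the reduction. No substantive differences to report.
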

\begin{proof}
	By definition of $\Omega$ in terms of $r$ and $\sigma$ as in \ref{subsec:norm}, $\Omega$ consists of spheres of $X_2^\top$ of the form $\gamma = (f \star_1 \sigma_{t_1(f)}, g \star_1 \sigma_{t_1(g)})$ for the critical branchings $(f,g)$ of $X$. Let now $k$ be a Kashiwara operator such that $k.f \neq 0$. We compute the action of $k$ on the sphere $\gamma$ as follows. First, by the crystal structure on $\text{Sph}(X)$ as in \ref{subsec:crystal on spheres} we have
	$$
	k.\gamma = k.\left(f\star_1 \sigma_{t_1(f)}, g\star_1 \sigma_{t_1(g)}\right)= \left(k.(f\star_1 \sigma_{t_1(f)}), k.(g\star_1 \sigma_{t_1(g)})\right).
	$$
	By Theorem \ref{thm:free crystal 2-mon} we have that $X_2^\top$ is a crystal 2-monoid, and by Definition \ref{def:crystal 2 monoids} \textbf{(CM4)} we obtain
	$$
	k.\gamma = \left((k.f)\star_1 (k.\sigma_{t_1(f)}), (k.g)\star_1 (k.\sigma_{t_1(g)})\right).
	$$
	Then from \eqref{eq:norm} we obtain $k.\sigma_{t_1(f)} = \sigma_{k.t_1(f)}$ and $k.\sigma_{t_1(g)}= \sigma_{k.t_1(g)}$. Now since the target map $t_1$ is a crystal morphism by Definition \ref{def:crystal 2 poly}, we have $k.t_1(f) = t_1(k.f)$ and $k.t_1(g)= t_1(k.g)$. Thus we finally obtain
	$$
	k.\gamma = \left((k.f)\star_1 \sigma_{t_1(k.f)}, (k.g)\star_1 \sigma_{t_1(k.g)}\right)
	$$
	By Proposition \ref{prop:kash on branchings} we have that $(k.f,k.g)$ is a critical branching, hence the 2-sphere $k.\gamma$ is the confluence diagram of $(k.f,k.g)\in\text{Crit}(X)$ determined by the crystal section $r$ and the crystal normalization strategy $\sigma$. This proves the first part of the theorem, i.e. that $\Omega$ is a subcrystal of $\text{Sph}(X)$.
	
	For the second part, consider the set of critical branchings at highest weight $\Lambda := \Omega^0$. Then via $r$ and $\sigma$ we can compute the highest weight elements of the family of generating confluences $\Omega$, which we deonte by $\Omega^0$. Consider now any critical branching $(f,g)\in \text{Crit}(X)$, and its highest weight $(f,g)^0$. Let $\gamma^0$ be the 2-sphere in $\Lambda$ corresponding to the critical branching $(f,g)^0$. Since $(f,g)^0$ is the unique highest weight vertex in the connected component $\bb_{\text{Crit}(X)}(f,g)$, there exists a sequence $k_1,k_2,\dots,k_l$ of $f_i$ Kashiwara operators (that is $k_j = f_{i_j})$ for some $i_j\in I$ such that
	$$
	k_1.k_2.\dots.k_l.(f,g)^0=(f,g).
	$$
	Then 2-sphere in $\Omega$ corresponding to $(f,g)$ is simply $k_1.k_2.\dots.k_l.\gamma^0$. This completes the proof of the theorem.
\end{proof}

We give here a graphical interpretation of the second part of the proof of Theorem \ref{thm:squier 2.0}. Let $X$, $r$, and $\sigma$ be as in the statement of the theorem, and $\Lambda$ as in the proof. Let $(f,g)\in\text{Crit}(X)$ and $(f,g)^0$ the corresponding critical branching of highest weight. Then the 2-sphere $\gamma\in\Omega$ corresponding to $(f,g)$ is determined by acting with the adequate Kashiwara operators on the 2-sphere $\gamma^0$ corresponding to $(f,g)^0$ in $\Lambda$. Graphically we can present this as
$$
\xymatrix@!R@R=.5em{ & v \ar@{:>}@/^/ ^\sigma_v [rd] & \\ u \ar@{=>}@/^/ [ru] \ar@{=>}@/_/ [rd]  & \gamma & \widehat{u}\\ & w \ar@{:>}@/_/ [ru]& \\ & v^0 \ar@{.>}@/^5ex/@[kuq] [uuu]  \ar@{=>}@/^/ [rd] & \\ {u^0} \ar@1@[kuq] [uuu]  \ar@2@/^/ [ru] \ar@2@/_/ [rd] & \gamma^0 & \widehat{u}^0 \ar@1@[kuq] [uuu] \\ & w^0 \ar@{.>}@/^5ex/@[kuq] [uuu] \ar@{=>}@/_/ [ru] &
}
$$
that is we obtain $\gamma$ via $\gamma^0$ and the Kashiwara operators. Note moreover that $(X_1,X_2,\Omega)$ is a crystal (3,1)-polygraph.
We remark here that a proof of Squier's completion theorem in the general context of monoids (and categories) can be found in the work of Guiraud-Malbos \cite{guiraud2018polygraphs}. In particular they explicitly prove the acyclicity of $\Omega$ by describing how to pave any 2-sphere by composing the elements of $\Omega$.
Guiraud-Malbos \cite{guiraud2018polygraphs}.

Finally, the following result shows that in the case of finite convergent reduced crystal 2-polygraphs, one may choose the leftmost and rightmost normalization strategies as $\sigma$ in Theorem \ref{thm:squier 2.0}.

\begin{prop}\label{prop:left-right}
	Let $X$ be a reduced convergent crystal 2-polygraph, $r$ a crystal section. Then the leftmost and rightmost normalization strategies are crystal normalization strategies.
\end{prop}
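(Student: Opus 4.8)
The plan is to verify directly the defining equation of a crystal normalization strategy, namely that $\sigma_{k.w} = k.\sigma_w$ for every $w \in X_1^*$ and every Kashiwara operator $k$ with $k.w \neq 0$. Since the leftmost and rightmost strategies are built recursively from the leftmost (resp. rightmost) rewriting steps via \eqref{eq:leftmost}, the heart of the matter is to show that the assignments $u \mapsto \lambda_u$ and $u \mapsto \rho_u$ commute with the Kashiwara operators, i.e. $\lambda_{k.u} = k.\lambda_u$ and $\rho_{k.u} = k.\rho_u$ whenever $k.u \neq 0$. Granting this, the equation $\sigma_{k.u} = k.\sigma_u$ follows by a straightforward induction.

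First I would establish the commutation of $k$ with the leftmost step. For a step $\gamma = t\alpha v$ the crystal structure on $\step(X)$ acts by one of $(k.t)\alpha v$, $t(k.\alpha)v$, or $t\alpha(k.v)$; in each nonzero case the prefix to the left of the applied relation keeps its length $|t|$, because a Kashiwara operator replaces a single letter without changing word length. Consequently $k$ preserves the position of a rewriting step, hence the order $\preceq$ on $\step(X)_u$, which is total since $X$ is reduced and finite. Moreover $k$ restricts to a map $\step(X)_u \to \step(X)_{k.u}$ (the source of $k.\gamma$ is $k.u$, as the crystal structure on $\step(X)$ is defined through its source map), and the opposite operator $k^{-1}$ (that is $f_i$ if $k = e_i$ and $e_i$ if $k = f_i$), which satisfies $k^{-1}.(k.u) = u$ by axiom \textbf{(C4)}, provides an order-preserving inverse $\step(X)_{k.u} \to \step(X)_u$. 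Thus $k$ is an isomorphism of two finite totally ordered sets, so it carries minimum to minimum and maximum to maximum, giving $\lambda_{k.u} = k.\lambda_u$ and $\rho_{k.u} = k.\rho_u$. The same bijection shows that $u$ is a normal form if and only if $k.u$ is, since $\step(X)_u = \varnothing$ precisely when $\step(X)_{k.u} = \varnothing$.

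With these facts in hand I would treat the leftmost strategy by induction on the length $\ell(u)$ of the leftmost rewriting path from $u$ to its normal form, which is finite by convergence. If $\ell(u) = 0$ then $u$ is a normal form, hence so is $k.u$, and $\sigma_{k.u} = 1_{k.u} = k.1_u = k.\sigma_u$. If $\ell(u) > 0$, then using $\lambda_{k.u} = k.\lambda_u$, the fact that $t_1$ is a crystal morphism (so $t_1(\lambda_{k.u}) = k.t_1(\lambda_u) \neq 0$), the inductive hypothesis applied to $t_1(\lambda_u)$, which has $\ell(t_1(\lambda_u)) = \ell(u) - 1$, and the compatibility \textbf{(CM4)} of $k$ with $\star_1$ from Definition \ref{def:crystal 2 monoids}, I compute
\begin{align*}
\sigma_{k.u} &= \lambda_{k.u} \star_1 \sigma_{t_1(\lambda_{k.u})} = (k.\lambda_u)\star_1 \sigma_{k.t_1(\lambda_u)}\\
&= (k.\lambda_u)\star_1(k.\sigma_{t_1(\lambda_u)}) = k.(\lambda_u \star_1 \sigma_{t_1(\lambda_u)}) = k.\sigma_u,
\end{align*}
the penultimate equality being licensed by \textbf{(CM4)} since $s_1(\lambda_u) = u$ and $k.u \neq 0$. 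The rightmost strategy is handled identically with $\rho$ in place of $\lambda$. The main obstacle is the middle step: one must check that the Kashiwara operators genuinely preserve the position of a rewriting rule and that, via the opposite operator, they act as order \emph{isomorphisms} rather than merely order-preserving injections, which is precisely what forces the leftmost and rightmost steps to be preserved.
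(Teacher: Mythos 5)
Your proof is correct and follows essentially the same route as the paper: the key step in both is showing $k.\lambda_u = \lambda_{k.u}$ by checking that $f \mapsto k.f$ is an order isomorphism $\step(X)_u \to \step(X)_{k.u}$ (with inverse given by the opposite operator), since $k$ acts on one component of a rewriting step and preserves the length of the prefix. The only difference is presentational: you conclude by induction on the length of the leftmost rewriting path (handling the normal-form base case explicitly), whereas the paper unrolls the recursion $\sigma_u = \lambda_u \star_1 \lambda_{u_1} \star_1 \cdots \star_1 \lambda_{u_l}$ and compares the two resulting expressions term by term.
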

\begin{proof}
	Recall the leftmost normalization strategy $\sigma$ as defined in \eqref{eq:leftmost}, given on $u\in X_1^*$ by $\sigma_u = \lambda_u \star_1 \sigma_{t_1(\lambda_u)}$. Let $k$ be a Kashiwara operator such that $k.u\neq 0$.
	
	We first show that $k.\lambda_u = \lambda_{k.u}$. Indeed, consider the sets of rewriting steps with source $u$ and $k.u$ respectively, that is $\text{Step}(X)_u$ and $\text{Step}(X)_{k.u}$. Since $X$ is a crystal 2-polygraph, we have that the mapping $f \longmapsto k.f$ defines a bijection from $\text{Step}(X)_u$ to $\text{Step}(X)_{k.u}$. We show that this bijection is compatible with the orders $\preceq$ on $\text{Step}(X)_u$ and $\step(X)_{k.u}$ as in \ref{subsec:norm}. Consider $f,g\in\step(X)_u$ such that $f\preceq g$. Suppose that $f=t\alpha t_1$ and $g=v\beta v_11$ for some $t,t_1,v,v_1\in X_1^*$ and $\alpha,\beta\in X_2$, such that $|t|<|v|$. Then $k$ acts on $f$ and $g$ by acting on one of their components, i.e. on $t$, $\alpha$, or $t'$ for $f$, and on $v$, $\beta$, or $v'$ for $g$. In each of these cases, $k.f$ and $k.g$ will be of the form $k.f= t'\alpha' t_1'$ and $k.g=v'\beta'v_1'$ with $|t| = |t'|$ and $|v|=|v'|$. Hence we have that $f\preceq g$ implies $k.f \preceq k.g$. Conversely, we consider the map $\text{Step}(X)_{k.u} \longrightarrow \text{Step}(X)_{k^{-1}.(k.u)}$, where $e_i^{-1} : = f_i$ and $f_i^{-1}:= e_i$, and obtain that $k.f \preceq k.g$ implies $f\preceq g$. Since the order $\preceq$ is total, and $\lambda_u$ is the minimal element of $\text{Step}(X)_u$, we have that $k.\lambda_u$ is the minimal element in $\step(X)_{k.u}$, hence $k.\lambda_u = \lambda_{k.u}$.
	
	Next, to show that $\sigma$ is a crystal normalization strategy, we note that by definition of $\sigma$ we have
	$
	\sigma_u = \lambda_u \star_1 \sigma_{u_1},
	$
	where $u \stackrel{\lambda_u}{\Longrightarrow} u_1=t_1(\lambda_u)$. Iterating this, since $X$ is a terminating 2-polygraph, we obtain 
	$$
	\sigma_u = \lambda_u \star_1 \lambda_{u_1} \star_1 \cdots \star_1 \lambda_{u_l}
	$$
	for some rewriting sequence $u\Longrightarrow u_1 \Longrightarrow \cdots \Longrightarrow u_l$ in $\mac{R}(X)$. Then from the first part of the proof and Definition \ref{def:crystal 2 monoids} \textbf{(CM4)} we obtain
	$$
	k.\sigma_u = k.(\lambda_u \star_1 \cdots \star_1 \lambda_{u_l})=(k.\lambda_u)\star_1 \cdots \star_1 (k.\lambda_{u_l})=\lambda_{k.u}\star_1 \cdots \star_1 \lambda_{k.u_l}.
	$$
	On the other hand, since the target map $t_1$ is a crystal morphism, we have $t_1(k.u) =k.t_1(u)=k.u_1$, and we obtain $\sigma_{k.u} = \lambda_{k.u} \star_1 \sigma_{k.u_1}$. Iterating this equation, we obtain
	$$
	\sigma_{k.u} = \lambda_{k.u}\star_1 \cdots \star_1 \lambda_{k.u_l}.
	$$
	which shows that $k.\sigma_u = \sigma_{k.u}$, hence the leftmost normalization strategy is indeed a crystal normalization strategy, which is what we wanted to show.
	
	In a completely analogous fashion one shows that the rightmost normalization strategy is also a crystal normalization strategy.
\end{proof}

\subsection{Coherence for plactic monoids of classical type}

Here we show how one may apply Theorem \ref{thm:squier 2.0} to the column presentations of the plactic monoids of classical type.

Consider the crystal bases of classical type $\Gamma= A_n,B_n,C_n,D_n,G_2$ as in \ref{subsec:classical crystals} and recall the column presentation $\col(\Gamma)$ of the plactic monoids $Pl(\Gamma)$ as in \ref{subsec:col pres}. By Theorem \ref{thm:col pres are good} we have that $\col(\Gamma)$ is a finite convergent crystal 2-polygraph. Hence for a crystal section $r$ and normalization strategy $\sigma$, by Theorem \ref{thm:squier 2.0} we can compute the coherent extension of $\col(\Gamma)$ at highest weight. In order to apply the theorem to $\col(\Gamma)$, we need to specify a crystal section and a crystal normalization strategy. For the crystal section, one can always choose $r$ that sends each word to its corresponding normal form. Since $\col(\Gamma)$ are reduced by Theorem \ref{thm:col pres is convergent}, then by Proposition \ref{prop:left-right} we can choose $\sigma$ to be the leftmost normalization strategy.

In order to apply Theorem \ref{thm:squier 2.0}, we identify the critical branchings of $\col(\Gamma)$. Since the rewriting rules in $\col(\Gamma)$ have sources of length $2$, the critical branchings will be of the form
$$
{\xymatrix@!R@R=1em{ & P_c(c_1c_2)c_3  \\
		c_1c_2c_3 \ar@2@ /^/^{P_cc_3}[ur] \ar@2@/_/ _{c_1P_c}[rd]& \\
		& c_1P_c(c_2c_3) }}
$$
for $c_1,c_2,c_3\in\col(\Gamma)$ satisfying $P_c(c_1c_2)\neq c_1c_2$ and $P_c(c_2c_3)\neq c_2c_3$. Then Theorem \ref{thm:squier 2.0} applied to $\col(\Gamma)$ admits the following form.
\begin{cor}
	Let $\Gamma = A_n, B_n, C_n, D_n, G_2$ be a crystal base of classical type, and $\sigma$ a crystal normalization strategy. Then the coherent extension of $\col(\Gamma)$ is determined by the 2-spheres
\begin{equation}\label{eq:coherence diagrams}
{\xymatrix@!R@R=1em{ & P_c(c_1c_2)c_3 \ar@2@ /^/ [rd] ^{\sigma_{P_c(c_1c_2)c_3}} & \\
		c_1c_2c_3 \ar@2@ /^/^{P_cc_3}[ur] \ar@2@/_/ _{c_1P_c}[rd]& & P_c(c_1c_2c_3)\\
		& c_1P_c(c_2c_3) \ar@2@ /_/ _{\sigma_{c_1P_c(c_2c_3)}}  [ur]}}
\end{equation}
$c_1,c_2,c_3 \in\col(\Gamma)$ such that $c_1c_2c_3$ is of highest weight and $P_c(c_1c_2)\neq c_1c_2$, $P_c(c_2c_3)\neq c_2c_3$.
\end{cor}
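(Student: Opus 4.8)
The plan is to derive this statement as a direct application of Theorem \ref{thm:squier 2.0} to the column presentation $\col(\Gamma)$. First I would invoke Theorem \ref{thm:col pres are good}, which guarantees that $\col(\Gamma)$ is a finite convergent reduced crystal 2-polygraph, so that all the hypotheses of Theorem \ref{thm:squier 2.0} are satisfied. As the crystal section $r$ I would take the map sending each word to its normal form; this is a crystal section because normal forms are computed by the insertion map $P_c$, which is a crystal morphism by Proposition \ref{prop:P_c crystal morphism}. Since $\col(\Gamma)$ is reduced, Proposition \ref{prop:left-right} then permits me to take $\sigma$ to be the leftmost (equivalently rightmost) normalization strategy, which is automatically a crystal normalization strategy.

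The substantive step is the classification of the critical branchings of $\col(\Gamma)$. Every generating 2-cell has the form $c_1c_2 \Longrightarrow P_c(c_1c_2)$, with source of length exactly two. Because all left-hand sides have this common length two and $\col(\Gamma)$ is reduced — so no left-hand side is a proper factor of another — there are no critical branchings of inclusion type in the sense of \eqref{eq:types of crits}; the only overlaps arise when two rules share a single middle generator. Hence every critical branching is the overlapping branching on a word $c_1c_2c_3$ with $c_1,c_2,c_3\in\col(\Gamma)$ such that both $c_1c_2$ and $c_2c_3$ are nontrivial left-hand sides, that is $P_c(c_1c_2)\neq c_1c_2$ and $P_c(c_2c_3)\neq c_2c_3$. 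This is precisely the displayed source, carrying the two rewriting steps $c_1c_2c_3\Longrightarrow P_c(c_1c_2)c_3$ and $c_1c_2c_3\Longrightarrow c_1P_c(c_2c_3)$.

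Next I would complete each such branching into a confluence diagram via the chosen strategy. As $\col(\Gamma)$ is convergent with normal forms given by $P_c$, both branches normalize to the common target $P_c(c_1c_2c_3)$, so the confluence diagram is exactly the one displayed in \eqref{eq:coherence diagrams}, with completing cells $\sigma_{P_c(c_1c_2)c_3}$ and $\sigma_{c_1P_c(c_2c_3)}$. By Theorem \ref{thm:squier 2.0} the resulting family $\Omega$ of generating confluences is a crystal cellular extension of $\col(\Gamma)_2^\top$, and its 2-spheres are entirely determined by the critical branchings of highest weight. By Proposition \ref{prop:kash on branchings} \textbf{iv)} the property of being a critical branching is preserved by the Kashiwara operators, and since $\text{Crit}(\col(\Gamma))$ is a subcrystal of $\mathfrak{g}\textsc{-cryst}$ each of its connected components contains a unique highest-weight element; these highest-weight critical branchings are exactly those whose source $c_1c_2c_3$ is of highest weight in $\col(\Gamma)_1^*$. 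Applying the $f_i$ operators to these recovers the confluence diagram of every remaining critical branching, which yields the asserted description.

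The part that requires genuine care is the classification of the critical branchings, namely confirming that reducedness together with the uniform source length two simultaneously excludes inclusion branchings and forces every overlap to be a three-column word $c_1c_2c_3$. Everything downstream is then a formal consequence of Theorem \ref{thm:squier 2.0} and of the crystal structure on critical branchings established in Proposition \ref{prop:kash on branchings}.
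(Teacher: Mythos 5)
Your proposal is correct and follows essentially the same route as the paper: invoke Theorem \ref{thm:col pres are good} to verify the hypotheses of Theorem \ref{thm:squier 2.0}, take $r$ to be the normal-form section and $\sigma$ the leftmost normalization strategy via Proposition \ref{prop:left-right}, observe that the length-two sources force every critical branching to be an overlap on a word $c_1c_2c_3$, and then reduce to highest weight. Your explicit remark that reducedness rules out inclusion-type branchings is slightly more detailed than the paper's one-line justification, but it is the same argument.
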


We note that for $\Gamma = A_n, B_n, C_n, D_n$, by Proposition \ref{prop:2 col lemmata} the column presentation is \textit{almost quadratic}, meaning that the generating rewriting rules are of the form $c_1c_2 \Longrightarrow w$ with $|w| \leq 2$. For the leftmost normalization strategy $\sigma$, the diagram in \eqref{eq:coherence diagrams} takes the form

\begin{equation}\label{eq:conf diags}
{\xymatrix@!R@R=1em{ & c_1'c_2'c_3 \ar@{=>}[r] ^{c_1'P_c}& c_1'c_2''c_3' \ar@{=>} [r] ^{P_c c_3'} & \cdots \ar@{=>}[r] &w_1 \ar@{=>}[rd] & \\
		c_1c_2c_3 \ar@2@ /^/^{P_c c_3}[ur] \ar@2@/_/ _{c_1P_c}[rd]& & & & & P_c(c_1c_2c_3),\\
		& c_1 d_2d_3 \ar@{=>}[r] ^ {P_c d_3}& d_1d_2'd_3 \ar@{=>}[r]  ^ {d_1 P_c} & \cdots \ar@{=>}[r] & w_2 \ar@{=>}[ru] & }}
\end{equation}
meaning it can be identified via computation of the rewriting rules at highest weight.

In \cite{hage2017knuth} Hage-Malbos identify the shapes of these generating confluence diagrams for $\col(A_n)$. If in the \eqref{eq:conf diags} we denote by $L$ and $R$ respectively the lengths of the top and bottom rewriting sequences, they show that $L, R\leq 3$, showing that the generating coherence relations for $Pl(A_n)$ admit a hexagonal shape. They achieve this result via a case-by-case analysis of the critical branchings and their corresponding confluence diagrams. The constructions presented in this article aim to reduce the identification of coherent extensions to only computations of the rewriting rules at highest weight. The next step towards completing this task naturally requires explicit computation with the elements of highest weight in $(\col(\Gamma)_1^*)^0$. In his PhD thesis the author has introduced combinatorial models in types $A$ and $C$, called Yamanouchi trees, designed to parameterize the words of highest weight in $\col(\Gamma)^*$, and to compute rewriting rules at highest weight. In particular one can use such objects to compute the shapes of the confluence diagrams in \eqref{eq:conf diags}. These combinatorial objects form part of an upcoming preprint by the author.

\printbibliography
%\begin{small}
%\renewcommand{\refname}{\Large\textsc{References}}
%\bibliographystyle{plain}
%\bibliography{biblioCURRENT}
%\end{small}

\quad

\vfill

%\begin{flushright}
%\begin{small}
%\noindent \textsc{Nohra Hage} \\
%\url{nohra.hage@usj.edu.lb} \\
%\'Ecole supérieure d’ingénieurs de Beyrouth (ESIB) \\ 
%Université Saint-Joseph de Beyrouth (USJ), Liban\\
%
%\bigskip
%
%\noindent \textsc{Philippe Malbos} \\
%\url{malbos@math.univ-lyon1.fr} \\
%Univ Lyon, Universit\'e Claude Bernard Lyon 1\\
%CNRS UMR 5208, Institut Camille Jordan\\
%43 blvd. du 11 novembre 1918\\
%F-69622 Villeurbanne cedex, France
%\end{small}
%\end{flushright}
%
%\vspace{0.25cm}
%
%\begin{small}---\;\;\today\;\;-\;\;\hhmm\;\;---\end{small} \hfill
\end{document}